\newcommand{\R}{{\mathbb R}}
\newcommand{\N}{{\mathbb N}}
\newcommand{\weakly}{\rightharpoonup}
\newcommand{\e }{\varepsilon}
\newcommand{\dive }{\mathop{\rm div}}
\renewcommand{\geq }{\geqslant}
\renewcommand{\le }{\leqslant}
\renewcommand{\leq }{\leqslant}
\newtheorem{Theorem}{Theorem}[section]
\newtheorem{Corollary}[Theorem]{Corollary}
\newtheorem{Lemma}[Theorem]{Lemma}
\theoremstyle{definition}
\newtheorem{remark}[Theorem]{Remark}
\begin{document}

\title[Systems with
  Neumann boundary coupling and higher order fractional
  equations]{Unique continuation and
  classification of blow-up profiles for elliptic systems with
  Neumann boundary coupling and applications to higher order fractional
  equations}

\author[Veronica Felli \and Alberto Ferrero]{Veronica Felli \and Alberto Ferrero}
\address{\hbox{\parbox{5.7in}{\medskip\noindent{Veronica Felli, \\
Universit\`a  degli Studi di Milano -- Bicocca,\\
        Dipartimento di Scienza dei Materiali, \\
        Via Cozzi
        55, 20125 Milano, Italy. \\[3pt]
        Alberto Ferrero, \\
        Universit\`a del Piemonte Orientale, \\
        Dipartimento di Scienze e Innovazione Tecnologica, \\
        Viale Teresa Michel 11, 15121 Alessandria, Italy. \\[3pt]
        \em{E-mail addresses: }{\tt veronica.felli@unimib.it,
          alberto.ferrero@uniupo.it}.}}}}

\date{\today}

\thanks{
The authors are partially supported by the INdAM-GNAMPA Project 2018 ``Formula di monotonia e applicazioni: problemi frazionari e stabilit\`a spettrale rispetto a perturbazioni del dominio''.
 V. Felli is partially supported by the PRIN 2015
grant ``Variational methods, with applications to problems in
mathematical physics and geometry''.   \\
\indent
  2000 {\it Mathematics Subject Classification:}  35J47, 35J30, 35B60,
  35C20.  \\
  \indent {\it Keywords:} Neumann boundary coupling, monotonicity
  formula, higher order
  fractional problems, unique continuation. }

\begin{abstract}
   \noindent
   In this paper we develop a monotonicity formula  for elliptic systems with
  Neumann boundary coupling, proving unique continuation and
  classification of blow-up profiles. As an application, we
  obtain strong unique continuation for some fourth order
   equations and
  higher order
  fractional problems.
\end{abstract}

 \maketitle

\section{Introduction and statement of the main results} \label{s:introduction}

The present paper is devoted to the study of unique continuation from
a boundary point  and
  classification of blow-up profiles for elliptic systems with
  Neumann boundary coupling. Systems of such
a  kind arise from higher order extensions of the fractional
  Laplacian, as first observed in \cite {Y}, where the well known
  Caffarelli-Silvestre extension procedure characterizing the
  fractional Laplacian as the Dirichlet-to-Neumann map in one extra
  spatial dimension was generalized to higher powers of the Laplacian.
  More precisely in \cite {Y} (see also \cite{CY}) it is proved that, if $s\in(1,2)$ and
  $f\in H^s(\R^N)$, then
\begin{equation}\label{eq:44b}
(-\Delta)^s f=K_{s}\lim_{t\to 0^+}t^{b}\frac{\partial(\Delta_b U)}{\partial
  t}
\end{equation}
where $b=3-2s$, $K_{s}$ is a constant depending only on $s$, $\Delta_bU=\Delta U+\frac{b}{t}\frac{\partial
  U}{\partial t}$ and $U$ is the unique solution to the problem
\[
\begin{cases}
  \Delta_b^2 U=0,&\text{in }\R^{N+1}_+=\R^N\times(0,+\infty),\\
U(x,0)=f(x),&\text{in }\R^{N},\\
\lim_{t\to 0^+}t^{b}\frac{\partial U}{\partial
  t}=0 ,&\text{in }\R^{N}.
\end{cases}
\]
Setting $V=\Delta_b U$ and taking into account \eqref{eq:44b}, the above fourth order problem can be rewritten
as the system
\[
 \begin{cases}
  \Delta_b U=V,&\text{in }\R^{N+1}_+,\\
  \Delta_b V=0,&\text{in }\R^{N+1}_+,\\
U(x,0)=f(x),&\text{in }\R^{N},\\
\lim_{t\to 0^+}t^{b}\frac{\partial U}{\partial
  t}=0 ,&\text{in }\R^{N},\\
K_{s} \lim_{t\to 0^+}t^{b}\frac{\partial V}{\partial
  t}=(-\Delta)^s f ,&\text{in }\R^{N}.
\end{cases}
\]
In \cite{Y} an Almgren's frequency formula in the spirit of \cite{almgren} is derived for solutions to
the higher order system
\begin{equation}\label{eq:51}
 \begin{cases}
  \Delta_b U=V,&\text{in }\R^{N+1}_+,\\
  \Delta_b V=0,&\text{in }\R^{N+1}_+,\\
\lim_{t\to 0^+}t^{b}\frac{\partial U}{\partial
  t}=0 ,&\text{in }\R^{N},\\
\lim_{t\to 0^+}t^{b}\frac{\partial V}{\partial
  t}=0 ,&\text{in }\R^{N},
\end{cases}
\end{equation}
obtained by extending $s$-harmonic functions; in the spirit of
Garofalo and Lin \cite{GL}, such monotonicity formula allows proving
a unique  continuation property for solutions to
system \eqref{eq:51}. In
\cite{Y} a strong unique continuation property is also stated for
$s$-harmonic functions.

The main goal of the present paper is to extend, in the case
$s=\frac32$, the monotonicity formula developed in \cite{Y} for the homogeneous case
\eqref{eq:51}  to systems with  a Neumann boundary coupling of the type
\begin{equation}\label{eq:52}
 \begin{cases}
  \Delta U=V,&\text{in }\R^{N+1}_+,\\
  \Delta V=0,&\text{in }\R^{N+1}_+,\\
\frac{\partial U}{\partial
  \nu}=0 ,&\text{in }\R^{N},\\
\frac{\partial V}{\partial
  \nu}=h\, U(\cdot,0),&\text{in }\R^{N},
\end{cases}
\end{equation}
which arise naturally
as extension of fractional equations of the form
\[
(-\Delta)^{3/2}u=a(x)u
\]
once we put $h=-K_{3/2}^{-1}\, a=-2a$. Indeed, by \cite[Proof of Lemma 3.2, Step 6]{FF-prep} we deduce that the constant $C_b$ defined there equals $\sqrt 2$ when $b=0$ and, since it can be shown that $K_s=C_b^{-2}$ with $b=3-2s$, we deduce that $K_{3/2}=\frac 12$.
The frequency function associated to problem \eqref{eq:52} is given by
the ratio of the local
energy over mass near the  fixed point $0\in\R^N$
\begin{equation}\label{eq:59}
\mathcal N(r)=\frac{r^{-N+1} \left[\int_{B_r^+} \left(|\nabla U|^2+|\nabla V|^2+UV\right)dz -\int_{B_r'} h(x)U(x,0)V(x,0)\, dx\right]}{r^{-N}\int_{S_r^+} (U^2+V^2)\, dS},
\end{equation}
where we are denoting
as $z=(x,t)\in \R^N\times\R$ the variable in
$\R^{N+1}=\R^N\times\R$, $dz=dx\,dt$  and, for all $r>0$,
\begin{align*}
&B_r=\{z\in \R^{N+1}:|z|<r\},\quad
B_r^+=\{(x,t)\in B_r:t>0\},\\
&B_r'=\{x\in \R^{N}:|x|<r\}=B_r\cap(\R^N\times\{(x,0):x\in\R^N\}),\\
&S_r^+=\{(x,t)\in \partial B_r:t>0\}.
\end{align*}
The classical approach developed by Garofalo and Lin \cite{GL} to
prove unique continuation through Almgren's monotonicity formula is
based on the validity of doubling type conditions, obtained as a
consequence of boundedness of the quotient $\mathcal N$.
We refer to  \cite{ae,aek,FFFN,FF-edinb,KukavicaNystrom,tz1,tz2}  for
unique continuation from the boundary established via
Almgren monotonicity formula.

 While in the local case doubling conditions are enough to
establish unique continuation, in the fractional case they provide
unique continuation only for the extended local problem and not for
the fractional one. Such difficulty was overcome in \cite{FF} for the
fractional Laplacian $(-\Delta)^{s}$ with $s\in(0,1)$,  by a
fine blow-up analysis and a precise classification of the possible
blow-up limit profiles in terms of a Neumann eigenvalue problem on the
half-sphere.

The problem of unique continuation for fractional
laplacians with power $s\in(0,1)$ was also studied in \cite{Ruland} in presence of rough
potentials using Carleman estimates and in
\cite{Yu} for fractional operators with variable coefficients using
an Almgren type monotonicity formula.
As far as higher fractional powers of the laplacian, the main
contribution to the problem of unique continuation is due to Seo in
papers \cite{seo0, seo1, seo2}, through Carleman inequalities; in particular
papers \cite{seo0, seo1, seo2} consider fractional Schr\"odinger operators with
potentials in Morrey spaces and prove a \emph{weak} unique
continuation result, i.e. vanishing of solutions which are zero on an
open set; we recall that the \emph{strong} unique
continuation  property instead requires the weaker assumption of
infinite vanishing order at some point.

We observe that the presence of a coupling Neumann term in system
\eqref{eq:52} produces substancial additional difficulties with respect to
the extension problem corresponding to the  lower order
fractional case $s\in(0,1)$ and consisting in a single equation
associated with a Neumann boundary condition. In particular the proof of  a
monotonicity formula for \eqref{eq:52}  is made quite delicate by the
appearance in the derivative of the frequency $\mathcal N$ of a term
of the type
\[
-r\int_{\partial B_r'}huv\,dS'+2\int_{B_r'}hu\,x\cdot
  \nabla_x v\,dx,
\]
see Lemma \ref{mono}. While in the lower order case we have only one
component $u=v$ so that an integration by parts allows rewriting the
above sum as an integral over $B_r'$, in the case of two components
$u,v$ this is no more possible and an estimate of the
integral over ``the boundary of the boundary''
$\int_{\partial B_r'}huv\,dS'$ is required. The method developed
here to overcome this difficulty is based on estimates in terms of
boundary integrals (see Lemma \ref{l:STIMA_DI_nu_2}) and represents
one of the main technical novelty of the present paper in the context
of monotonicity formulas; we think that
this procedure could have future applications in the extension of some
of the results of \cite{FF} to rough potentials, since it could avoid
the integration by parts needed to write  the
above sum as an integral over $B_r'$, which requires differentiability
of the potential $h$.

Let $N>3$,
$R>0$, and $(U,V)\in H^1(B_R^+)\times H^1(B_R^+)$ be  a weak solution to the system
\begin{equation}\label{eq:system}
  \begin{cases}
\Delta U=V,&\text{in }B_R^+,\\
\Delta V=0,&\text{in }B_R^+,\\
\frac{\partial U}{\partial \nu}= 0, &\text{in } B_R',\\
\frac{\partial V}{\partial \nu}=hu, &\text{in } B_R',
  \end{cases}
\end{equation}
where $u=U(\cdot,0)$ (trace of $U$ on $B_R'$) and $h\in
C^1(B_R')$. We also denote  $v=V(\cdot,0)$ (trace of $V$ on
$B_R'$).  By a weak solution to the system \eqref{eq:system} we mean a
couple $(U,V)\in H^1(B_R^+)\times H^1(B_R^+)$ such that, for every
$\varphi\in  H^1(B_R^+)$ having zero trace on $S_R^+$,
\[
\begin{cases}
\int_{B_R^+}\nabla U(z)\cdot\nabla
\varphi(z)\,dz=-\int_{B_R^+}V(z)\varphi(z)\,dz,\\
\int_{B_R^+}\nabla V(z)\cdot\nabla
\varphi(z)\,dz=\int_{B_R'}h(x)u(x)\mathop{\rm Tr}\varphi (x)\,dx,
\end{cases}
\]
where $\mathop{\rm Tr}\varphi$ is the trace of $\varphi$ on $B_R'$.

Our first result is an asymptotic expansion of nontrivial solutions to
\eqref{eq:system}; more precisely we prove that blow-up profiles can
be described as combinations of spherical harmonics symmetric with
respect to the equator $t=0$.

Let $-\Delta _{\mathbb{S}^{N}}$ denote the Laplace Beltrami operator
on the $N$-dimensional unit sphere $\mathbb{S}^{N}$. It is well known
that the  eigenvalues
of $-\Delta _{\mathbb{S}^{N}}$  are given by
\begin{equation*}
\lambda _{\ell }=(N-1+\ell )\ell ,\quad \ell =0,1,2,\dots .
\end{equation*}
For every $\ell\in\N$, it is easy to verify that there exists a
spherical harmonic on $\mathbb{S}^{N}$ of degree $\ell$ which is
symmetric with respect to the equator $t=0$\footnote{It is enough to
  take a homogeneous harmonic polynomial $P=P(x_1,x_2,\dots,x_N)$ in
  $N$ variables of degree $\ell$ and consider the  homogeneous harmonic
  polynomial in $N+1$ variables
  $P'(x_1,x_2,\dots,x_N,x_{N+1})=P(x_1,x_2,\dots,x_N)$, whose
  restriction to $\mathbb{S}^{N}$ satisfies the required properties.}. Therefore
the eigenvalues of the  problem
\begin{align}\label{eq:sph_eig}
  \begin{cases}
    -\Delta_{{\mathbb S}^{N}}\psi=\lambda\,\psi, &\text{in }{\mathbb S}^{N}_+,\\[5pt]
\nabla_{{\mathbb
    S}^{N}}\psi\cdot {\mathbf
  e}=0,&\text{on }\partial {\mathbb S}^{N}_+,
  \end{cases}
\end{align}
with
\begin{equation*}
  {\mathbb S}^{N}_+=\{(\theta_1,\theta_2,\dots, \theta_{N+1})\in
  {\mathbb S}^{N}:\theta_{N+1}>0\},\quad {\mathbf e}=(0,0,\dots,0,1),
\end{equation*}
are given by the sequence $\{\lambda_\ell:\ell=0,1,2,\dots\}$; for
every $\ell$, $\lambda_\ell$ has finite multiplicity $M_\ell$ as an
eigenvalue of \eqref{eq:sph_eig}. For every $\ell \geq 0$, let
$\{Y_{\ell ,m}\}_{m=1,2,\dots ,M_{\ell }}$ be a
$L^{2}(\mathbb{S}^{N}_+)$-orthonormal basis of the eigenspace of
\eqref{eq:sph_eig} associated to $\lambda _{\ell }$ with $Y_{\ell ,m}$
being spherical harmonics of degree $\ell $.

We note that, if $\Psi$ is an eigenfunction of \eqref{eq:sph_eig}, then
$\Psi\not\equiv0$ on $\partial {\mathbb
  S}^{N}_+={\mathbb S}^{N-1}$; indeed, by
unique continuation, $\Psi$ and $\nabla_{{\mathbb
    S}^{N}}\Psi\cdot {\mathbf  e}$ can not both vanish on $\partial {\mathbb
  S}^{N}_+$. In particular
$Y_{\ell,m}\not\equiv0$ on $\partial {\mathbb
  S}^{N}_+={\mathbb S}^{N-1}$ for all $\ell\in\N$ and $1\leq m\leq M_\ell$.

\begin{Theorem} \label{t:asym}
 Let $(U,V)\in H^1(B_R^+)\times H^1(B_R^+)$ be  a weak solution to
\eqref{eq:system} such that $(U,V)\not=(0,0)$. Then there exists
$\ell\in\N$ such that
\[
\lambda^{-\ell}U(\lambda z)\to \widehat U(z),\quad
\lambda^{-\ell}V(\lambda z)\to \widehat V(z),
\]
as $\lambda\to0^+$ strongly in $H^1(B_1^+)$, where
\[
\widehat U(z)=|z|^{\ell}
\sum_{m=1}^{M_\ell}\alpha_{\ell,m}Y_{\ell,m}\Big(\frac
z{|z|}\Big),\quad
 \widehat V(z)=|z|^{\ell}
\sum_{m=1}^{M_\ell}\alpha'_{\ell,m}Y_{\ell,m}\Big(\frac
z{|z|}\Big),
\]
\begin{align}
 \label{eq:45} \alpha_{\ell,m}&=R^{-\ell}\int_{{\mathbb S}^{N}_+}U(R\,\theta)
  Y_{\ell,m}(\theta)\,dS-
\frac{R^{-N-2\ell+1}}{N+2\ell-1}\int_0^R t^{N+\ell}\left(\int_{{\mathbb S}^{N}_+}V(t\,\theta)
    Y_{\ell,m}(\theta)\,dS\right)\,dt\\
\notag&\qquad\qquad
+\int_0^R\frac{t^{-\ell+1}}{2\ell+N-1}\left(\int_{{\mathbb S}^{N}_+}V(t\,\theta)
    Y_{\ell,m}(\theta)\,dS\right)\,dt,\\
 \label{eq:46} \alpha'_{\ell,m}&=R^{-\ell}\int_{{\mathbb S}^{N}_+}V(R\,\theta)
  Y_{\ell,m}(\theta)\,dS\\
\notag&\qquad\qquad-
\frac{R^{-N-2\ell+1}}{N+2\ell-1}\int_0^R t^{N+\ell-1}\left(
\int_{{\mathbb
      S}^{N-1}}h(t\theta')U(t\theta',0) Y_{\ell,m}(\theta',0) \,dS'\right)\,dt\\
\notag&\qquad\qquad
+\int_0^R\frac{t^{-\ell}}{2\ell+N-1}\left(
\int_{{\mathbb
      S}^{N-1}}h(t\theta')U(t\theta',0)Y_{\ell,m}(\theta',0)\,dS'
\right)\,dt,
\end{align}
and
\[
\sum_{m=1}^{M_\ell}((\alpha_{\ell,m})^2+(\alpha'_{\ell,m})^2)\neq0.
\]
\end{Theorem}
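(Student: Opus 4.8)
The proof combines the Almgren-type monotonicity formula of Lemma~\ref{mono} with a blow-up argument and a Fourier decomposition on the half-sphere $\mathbb S^N_+$. Throughout set $\mathsf H(r)=r^{-N}\int_{S_r^+}(U^2+V^2)\,dS$. \emph{Step 1.} By the monotonicity formula of Lemma~\ref{mono}, a suitable perturbation of $\mathcal N$ is monotone, so that $\mathcal N$ is bounded near $0$ and $\gamma:=\lim_{r\to0^+}\mathcal N(r)$ exists, is finite and nonnegative (the last fact from a trace/Poincar\'e inequality, which makes the numerator in \eqref{eq:59} nonnegative for $r$ small). Differentiating $\log\mathsf H$ and using this together with the boundary estimates of Lemma~\ref{l:STIMA_DI_nu_2} and $h\in C^1(B_R')$ to absorb the perturbative boundary terms, one obtains the two-sided bound $c^{-1}r^{2\gamma}\le\mathsf H(r)\le c\,r^{2\gamma}$ for $r$ small and the doubling property $\mathsf H(2r)\le C\,\mathsf H(r)$.

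\emph{Step 2 (blow-up and integrality of $\gamma$).} For small $\lambda>0$ put $U^\lambda(z)=\mathsf H(\lambda)^{-1/2}U(\lambda z)$ and $V^\lambda(z)=\mathsf H(\lambda)^{-1/2}V(\lambda z)$, so that $\int_{S_1^+}((U^\lambda)^2+(V^\lambda)^2)\,dS=1$, while
\[
\Delta U^\lambda=\lambda^2V^\lambda,\quad \Delta V^\lambda=0\ \text{in }B_1^+,\qquad \frac{\partial U^\lambda}{\partial\nu}=0,\quad \frac{\partial V^\lambda}{\partial\nu}=\lambda\,h(\lambda\,\cdot)\,U^\lambda(\cdot,0)\ \text{on }B_1'.
\]
The bound on $\mathcal N$ and Hardy/trace inequalities make $\{(U^\lambda,V^\lambda)\}$ bounded in $H^1(B_1^+)\times H^1(B_1^+)$; along any sequence $\lambda_n\to0^+$ one extracts $(U^{\lambda_n},V^{\lambda_n})\weakly(\Phi,\Psi)$ in $H^1(B_1^+)\times H^1(B_1^+)$, with strong convergence in $L^2(S_1^+)$ (so that $\int_{S_1^+}(\Phi^2+\Psi^2)\,dS=1$) and, by passing to the limit in the energy identity, strong convergence in $H^1(B_1^+)$. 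Passing to the limit in the equations---the coupling terms vanish owing to the factor $\lambda$ and to $h\in L^\infty$---shows that $\Phi$ and $\Psi$ are harmonic in $B_1^+$ with homogeneous Neumann data on $B_1'$, hence extend by even reflection to harmonic functions on $B_1$; and the frequency of $(\Phi,\Psi)$ is constant and equal to $\gamma$ (being the limit of the frequencies of the rescalings, which equal $\mathcal N(\lambda_n r)(1+o(1))\to\gamma$). The classical Pohozaev/differentiation argument then forces $(\Phi,\Psi)$ to be homogeneous of degree $\gamma$, so $\gamma=\ell\in\N$, and, by the classification of the eigenspaces of \eqref{eq:sph_eig}, $\Phi(z)=|z|^\ell\sum_{m=1}^{M_\ell}\beta_mY_{\ell,m}(z/|z|)$ and $\Psi(z)=|z|^\ell\sum_{m=1}^{M_\ell}\beta'_mY_{\ell,m}(z/|z|)$ with $\sum_m(\beta_m^2+(\beta'_m)^2)\ne0$.

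\emph{Step 3 (exact profile and convergence of the whole family).} Write $U(r\theta)=\sum_{j,m}\varphi_{j,m}(r)Y_{j,m}(\theta)$ and $V(r\theta)=\sum_{j,m}\psi_{j,m}(r)Y_{j,m}(\theta)$ on $\mathbb S^N_+$. Projecting \eqref{eq:system} onto $Y_{j,m}$, and using that $\nabla_{\mathbb S^N}Y_{j,m}\cdot\mathbf e=0$ on $\partial\mathbb S^N_+$, gives for each $(j,m)$ the Euler-type system
\begin{align*}
&-\varphi_{j,m}''-\tfrac Nr\varphi_{j,m}'+\tfrac{\lambda_j}{r^2}\varphi_{j,m}=-\psi_{j,m},\\
&-\psi_{j,m}''-\tfrac Nr\psi_{j,m}'+\tfrac{\lambda_j}{r^2}\psi_{j,m}=\tfrac1r\int_{\mathbb S^{N-1}}h(r\theta')u(r\theta')Y_{j,m}(\theta',0)\,dS'.
\end{align*}
The homogeneous operator has fundamental system $\{r^j,\,r^{-(j+N-1)}\}$; since $(U,V)\in H^1(B_R^+)\times H^1(B_R^+)$ the singular solution $r^{-(j+N-1)}$ is excluded, so each $\varphi_{j,m},\psi_{j,m}$ equals a constant multiple of $r^j$ plus the particular integrals given by variation of parameters, with Wronskian weight $(2j+N-1)^{-1}$. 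By Step~1 one has $u=O(r^\ell)$, hence the forcing terms above are $O(r^{\ell-1})$ and the corresponding particular integrals are $O(r^{\ell+1})=o(r^\ell)$ (and likewise those driven by $\psi_{j,m}$ in the first equation are $o(r^\ell)$); moreover the coefficient of $r^j$ must vanish whenever $j<\ell$, since otherwise $\mathsf H(r)\gtrsim r^{2j}$ would contradict $\mathsf H(r)\le c\,r^{2\ell}$. Imposing the value at $r=R$ then identifies the coefficient of $r^\ell$ in $\varphi_{\ell,m}$ and in $\psi_{\ell,m}$ with the right-hand sides of \eqref{eq:45} and \eqref{eq:46}, and shows that $r^{-\ell}\varphi_{j,m}(r)\to\alpha_{\ell,m}\,\delta_{j\ell}$ and $r^{-\ell}\psi_{j,m}(r)\to\alpha'_{\ell,m}\,\delta_{j\ell}$ as $r\to0^+$. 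Summing the series, $\lambda^{-\ell}U(\lambda\,\cdot)\to\widehat U$ and $\lambda^{-\ell}V(\lambda\,\cdot)\to\widehat V$ in $L^2(S_1^+)$ along the \emph{whole} family $\lambda\to0^+$, and the convergence is upgraded to strong $H^1(B_1^+)$ convergence using the equations and the energy bound of Step~1. Comparing with the nontrivial limit found in Step~2 (which differs only by the normalising factor $\mathsf H(\lambda)^{-1/2}\lambda^{\ell}$, bounded away from $0$ and $\infty$) gives $\sum_m(\alpha_{\ell,m}^2+(\alpha'_{\ell,m})^2)\ne0$.

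\emph{The main obstacle.} The delicate point is the Neumann coupling $\partial_\nu V=hu$: it produces the ``boundary of the boundary'' term in the derivative of $\mathcal N$, so that Step~1 rests essentially on the boundary estimates of Lemma~\ref{l:STIMA_DI_nu_2}, and in Step~3 one must verify that the forcing it creates in the radial ODEs contributes only at higher order, without perturbing the leading coefficients \eqref{eq:45}--\eqref{eq:46}. Once the monotonicity formula and its doubling consequence are in hand, the blow-up classification of Step~2 follows the classical pattern of Garofalo--Lin, the limit problem being a decoupled pair of harmonic functions with homogeneous Neumann conditions.
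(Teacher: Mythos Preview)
Your outline follows the paper's strategy closely, but there is a genuine gap in Step~1: the sharp lower bound $\mathsf H(r)\ge c^{-1}r^{2\gamma}$ does \emph{not} follow from the monotonicity formula and the estimates of Lemma~\ref{l:STIMA_DI_nu_2}. Differentiating $\log\mathsf H$ and using $\mathcal N(r)\to\gamma$ only gives, for every $\sigma>0$, a bound $\mathsf H(r)\ge K(\sigma)r^{2\gamma+\sigma}$ (this is exactly the content of Lemma~\ref{l:uppb}); passing from this to the sharp lower bound would require the integrability of $(\mathcal N(r)-\gamma)/r$ near $0$, which is not available from Lemma~\ref{mono}. You then use the unjustified two-sided bound at the end of Step~3, when you assert that the normalising factor $\mathsf H(\lambda)^{-1/2}\lambda^{\ell}$ is ``bounded away from $0$ and~$\infty$'' in order to compare $(\alpha_{\ell,m},\alpha'_{\ell,m})$ with the nontrivial $(\beta_m,\beta'_m)$ of Step~2. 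As written, this step is circular.

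The paper closes this gap differently (Lemmas~\ref{l:limite} and~\ref{l:limitepositivo}): first one shows that $\lim_{r\to0^+}r^{-2\ell}\mathsf H(r)$ exists (using the decomposition $\mathcal N'=-c+f$ with $f\ge0$), and then one proves it is strictly positive by contradiction, \emph{using the Fourier analysis you carried out in Step~3}. Namely, if the limit were~$0$, then each $\varphi_{\ell,m},\widetilde\varphi_{\ell,m}=o(\lambda^{\ell})$, hence (by the ODE and the variation-of-parameters formulas) $\varphi_{\ell,m}=O(\lambda^{\ell+2})$, $\widetilde\varphi_{\ell,m}=O(\lambda^{\ell+1})$; dividing by $\sqrt{\mathsf H(\lambda)}\ge K\lambda^{\ell+1/2}$ (the \emph{weak} lower bound) forces $(U^\lambda,Y_{\ell,m})_{L^2(\mathbb S^N_+)}\to0$ and $(V^\lambda,Y_{\ell,m})_{L^2(\mathbb S^N_+)}\to0$, contradicting $\sum_m(\beta_m^2+(\beta'_m)^2)=1$ from Step~2. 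A second, smaller point: the passage ``summing the series'' to obtain $L^2(S_1^+)$-convergence of the whole family needs a uniform tail control you have not supplied; the paper avoids this by the standard Urysohn argument (subsequential convergence plus uniqueness of the limit coefficients \eqref{eq:45}--\eqref{eq:46}).
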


A first remarkable consequence of Theorem \ref{t:asym} is the validity
of a {\em{strong unique continuation
  property}} (from the boundary point $0$) for solutions to \eqref{eq:system}.

\begin{Theorem}\label{t:sun-ext}
 Let $(U,V)\in H^1(B_R^+)\times H^1(B_R^+)$ be  a weak solution to
\eqref{eq:system}. If
\begin{equation}\label{eq:case2}
U(z)=o(|z|^n)\quad\text{as }|z|\to 0\text{ for all $n\in
  \N$}
\end{equation}
then $U\equiv V\equiv 0$ in $B_R^+$.
\end{Theorem}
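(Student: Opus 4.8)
The plan is to derive Theorem~\ref{t:sun-ext} as a direct consequence of the blow-up classification in Theorem~\ref{t:asym}, by arguing by contradiction. Suppose $(U,V)$ is a weak solution to \eqref{eq:system} with $(U,V)\neq(0,0)$ but $U$ vanishes of infinite order at $0$ in the sense of \eqref{eq:case2}. By Theorem~\ref{t:asym}, there exists $\ell\in\N$ and a nontrivial pair $(\widehat U,\widehat V)$ of $\ell$-homogeneous functions such that $\lambda^{-\ell}U(\lambda z)\to\widehat U(z)$ and $\lambda^{-\ell}V(\lambda z)\to\widehat V(z)$ strongly in $H^1(B_1^+)$, with $\widehat U(z)=|z|^\ell\sum_m\alpha_{\ell,m}Y_{\ell,m}(z/|z|)$, $\widehat V(z)=|z|^\ell\sum_m\alpha'_{\ell,m}Y_{\ell,m}(z/|z|)$, and $\sum_m((\alpha_{\ell,m})^2+(\alpha'_{\ell,m})^2)\neq0$.

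First I would use the infinite vanishing hypothesis on $U$ to force $\widehat U\equiv0$: indeed \eqref{eq:case2} implies $\lambda^{-\ell}\|U(\lambda\,\cdot)\|_{L^2(B_1^+)}\to0$ as $\lambda\to0^+$ (since $|U(\lambda z)|\le C_{\ell+1}(\lambda|z|)^{\ell+1}$ on $B_1^+$ for $\lambda$ small), and by the strong $H^1$ — hence $L^2$ — convergence this gives $\|\widehat U\|_{L^2(B_1^+)}=0$, so $\widehat U\equiv0$, i.e. $\alpha_{\ell,m}=0$ for all $m$. Consequently $\alpha'_{\ell,m}\neq0$ for at least one $m$, and $\widehat V=|z|^\ell\sum_m\alpha'_{\ell,m}Y_{\ell,m}(z/|z|)\not\equiv0$. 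The second step is to observe that $\widehat V$ must solve the limiting problem satisfied by blow-ups of the second equation: passing to the limit in the weak formulation (the coupling term $\int_{B_R'}hu\,\mathrm{Tr}\,\varphi$ and the zeroth-order term $\int V\varphi$ rescale with strictly higher powers of $\lambda$ and disappear), $\widehat V$ is harmonic in $B_1^+$ with $\frac{\partial\widehat V}{\partial\nu}=0$ on $B_1'$, and $\ell$-homogeneous; by the eigenvalue analysis preceding Theorem~\ref{t:asym} this is consistent. Crucially, from the first equation the blow-up pair must satisfy $\Delta\widehat U=0$ as well (the right-hand side $V$ rescales away), and since $\widehat U\equiv0$ we would need a compatibility relation forcing $\widehat V\equiv0$: more directly, I would instead feed $\alpha_{\ell,m}=0$ together with the explicit formula \eqref{eq:45} back in — but the cleaner route is the following third step.

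The decisive step is to iterate. Having shown $\widehat U\equiv0$, I claim one can show $U$ itself vanishes of order strictly larger than $\ell$, contradicting the fact that $\ell$ is the exact vanishing order of $U$ as produced by the blow-up (by construction in Theorem~\ref{t:asym}, $\ell$ is the limit $\mathcal N(0^+)$ of the Almgren frequency, which equals the precise homogeneity of the solution). Alternatively, and more robustly, I would run the blow-up argument with the scaling exponent $\ell$ replaced by larger integers: the hypothesis \eqref{eq:case2} says $U$ vanishes faster than \emph{every} polynomial, so for each $n$ one has $\lambda^{-n}U(\lambda\,\cdot)\to0$ in $L^2(B_1^+)$; but Theorem~\ref{t:asym} asserts the limit of $\lambda^{-\ell}(U(\lambda\,\cdot),V(\lambda\,\cdot))$ is \emph{nontrivial}, and in particular $\lambda^{-\ell}(U,V)(\lambda\,\cdot)$ does not converge to $(0,0)$. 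Thus, once we know $\widehat U\equiv 0$, the nontriviality is carried entirely by $\widehat V$, and we use the coupling in reverse: since $\partial_\nu V=hu$ on $B_R'$ and $\partial_\nu U=0$, and since $U$ vanishes to infinite order at $0$, a bootstrap (using interior and boundary elliptic estimates for the system \eqref{eq:system}, Caccioppoli and Sobolev on dyadic annuli $B_{2r}^+\setminus B_r^+$, together with \eqref{eq:case2}) shows that $V$ also vanishes to infinite order at $0$; then the blow-up limit $(\widehat U,\widehat V)$ would be $(0,0)$, contradicting Theorem~\ref{t:asym}. Hence the assumption $(U,V)\neq(0,0)$ is untenable, and $U\equiv V\equiv0$ in $B_R^+$.

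The main obstacle is the implication ``$U$ vanishes to infinite order $\Rightarrow$ $V$ vanishes to infinite order,'' i.e. transferring the vanishing hypothesis from $U$ to the coupled component $V$. This is delicate precisely because of the Neumann boundary coupling emphasized in the introduction: $V$ is not harmonic with homogeneous boundary data but carries the term $hu$ on $B_R'$, so a naive elliptic estimate does not immediately propagate the decay. I expect this to require the quantitative doubling/almost-monotonicity consequences of Lemma~\ref{mono} and the boundary estimates of Lemma~\ref{l:STIMA_DI_nu_2}: from boundedness of the frequency $\mathcal N$ one gets that the vanishing order of the \emph{pair} $(U,V)$ (measured by $\int_{S_r^+}(U^2+V^2)$) is finite and equals some $\ell$, so infinite vanishing of $U$ alone already contradicts the nontriviality recorded in Theorem~\ref{t:asym} unless $\widehat V\not\equiv0$ — and then one uses the first equation $\Delta U=V$ with $\widehat U\equiv 0$ to conclude $\widehat V\equiv0$ after all. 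Making that last compatibility rigorous (ruling out $\widehat U\equiv0$, $\widehat V\not\equiv0$ as a genuine blow-up limit of \eqref{eq:system}) is the crux, and I would handle it by passing to the limit in the rescaled first equation and invoking unique continuation for the single limiting equation $\Delta\widehat U=\widehat V$, $\partial_\nu\widehat U=0$ on $B_1'$.
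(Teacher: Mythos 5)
Your overall skeleton (blow-up by contradiction, \eqref{eq:case2} forcing $\widehat U\equiv0$, hence $\widehat V\not\equiv0$, then using the first equation to rule this out) is the same as the paper's, and your very last sentences point toward the correct mechanism. However, the routes you actually develop contain genuine errors. First, the ``iterate'' step is based on a false premise: $\ell$ is \emph{not} the exact vanishing order of $U$ alone; it is the vanishing order of the pair, measured by $H(r)=r^{-N}\int_{S_r^+}(U^2+V^2)\,dS$. Theorem \ref{t:asym} explicitly permits $\widehat U\equiv0$ with $\widehat V\not\equiv0$, so the fact that $\lambda^{-n}U(\lambda\cdot)\to0$ for every $n$ does not by itself contradict the nontriviality of the blow-up limit. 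Second, your ``more robust'' bootstrap --- transferring infinite vanishing from $U$ to $V$ via the boundary coupling $\partial_\nu V=hu$ together with elliptic estimates on dyadic annuli --- does not work as stated: $V$ is harmonic, and fast decay of its Neumann datum $hu$ at $0$ puts no constraint whatsoever on the size of $V$ near $0$ (a nontrivial harmonic function with $\partial_\nu V=0$ on the flat part is the obvious obstruction). The only channel through which decay of $U$ controls $V$ is the interior identity $V=\Delta U$, not the boundary coupling.

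Third, the step you call the crux is left at the level of a gesture, and the version you sketch cannot produce the contradiction. If you pass to the limit in the first equation at the natural rate $\lambda^{-\ell}$, you get (as you yourself note) $\Delta\widehat U=0$, because the right-hand side carries the extra factor $\lambda^{2}$; at that rate the limiting system is decoupled, and the configuration $\widehat U\equiv0$, $\widehat V\not\equiv0$ is perfectly consistent with it, so no ``unique continuation for $\Delta\widehat U=\widehat V$'' is available --- indeed that equation is not the limit equation at rate $\ell$, and there is no $H^1$ bound allowing a blow-up of $U$ at the rate $\ell+2$ that would make it one. The missing device (which is the paper's proof) is to rescale the two components at \emph{mismatched} rates: set $\widetilde U_\lambda(z)=\lambda^{-\ell-2}U(\lambda z)$, so that $\Delta\widetilde U_\lambda(z)=\lambda^{-\ell}V(\lambda z)$, and test against $\varphi\in C^\infty_c(B_1^+)$. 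Then
\[
\int_{B_1^+}\nabla\widetilde U_\lambda\cdot\nabla\varphi\,dz
=-\lambda^{-\ell-2}\int_{B_1^+}U(\lambda z)\,\Delta\varphi(z)\,dz\to0
\]
by \eqref{eq:case2}, while the same quantity equals $\int_{B_1^+}\lambda^{-\ell}V(\lambda z)\varphi(z)\,dz\to\int_{B_1^+}\widehat V\varphi\,dz$ by the strong convergence in Theorem \ref{t:asym}; hence $\widehat V\equiv0$, the desired contradiction. Note that no elliptic estimates, boundary analysis, or unique continuation are needed at this stage --- only the duality trick at the shifted rate, which is precisely where the hypothesis of infinite vanishing order enters. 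Without this scaling mismatch made explicit and justified (it is legitimate only in the distributional sense above, since $\lambda^{-\ell-2}U(\lambda\cdot)$ has no a priori $H^1$ bound), your argument does not close.
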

We observe that in the case of a single equation a blow result
as the one stated in Theorem \ref{t:asym} directly yields the
strong unique continuation: indeed, if the solution has a precise
vanishing order it cannot vanish of any order. On the other hand, in
the case of a system of type \eqref{eq:system}, the blow-up Theorem
\ref{t:asym} ensures that the couple of the limit profiles
$(\widehat U, \widehat V)$ is not
trivial, i.e. at least one of the two components
$U,V$ has  a precise
vanishing order; hence some further analysis is needed to deduce
strong unique continuation from Theorem
\ref{t:asym}.

System \eqref{eq:system} is related to fourth order elliptic equations
arising in Caffarelli-Silvestre type extensions for higher order
fractional laplacians in the spirit of \cite{Y}. Let us define $\mathcal D$ as the completion of
\begin{equation} \label{eq:space}
\mathcal T:=\left\{U\in C^\infty_c(\overline{\R^{N+1}_+}):U_t\equiv 0 \ \text{on} \
\R^N\times\{0\}
\right\}
\end{equation}
with respect to the norm
\[
\|U\|_{\mathcal D}=\bigg(\int_{\R^{N+1}_+} |\Delta
U(x,t)|^2\,dx\,dt\bigg)^{\!\!1/2}.
\]
By \cite{FF-prep} there exists a well defined continuous  trace
map
\[
\mathop{\rm Tr}:\mathcal D\to \mathcal D^{3/2,2}(\R^N),
\]
where  the space $\mathcal D^{3/2,2}(\R^N)$ is defined as
the completion of $C^\infty_c(\R^N)$ with respect to the scalar product
\begin{equation} \label{eq:scalar-product}
(u,v)_{\mathcal D^{3/2,2}(\R^N)}:=\int_{\R^N} |\xi|^{3}\, \widehat
u(\xi)\overline{\widehat v(\xi)} \, d\xi.
\end{equation}
In \eqref{eq:scalar-product} $\widehat u$ denotes the Fourier transform of $u$ in $\R^N$:
$$
\widehat u(\xi)=\frac 1{(2\pi)^{N/2}} \int_{\R^N} e^{-i\xi x}u(x)\, dx \, .
$$
Moreover in \eqref{eq:scalar-product} we denoted by
$\overline{\widehat v(\xi)}$ the complex conjugate of
  $\widehat v(\xi)$.

We observe that, since $u$ and $v$ are real functions, \eqref{eq:scalar-product} is really a scalar product although their respective Fourier transforms are complex functions.

As a corollary of Theorem \ref{t:asym} we derive sharp asymptotic
estimates and  a strong unique continuation
principle for weak $\mathcal D$-solutions to the fourth order elliptic
problem
\begin{equation}\label{eq:44}
  \begin{cases}
    \Delta^2 U=0,&\text{in }\R^{N+1}_+,\\[3pt]
\frac{\partial U}{\partial\nu}=0,&\text{in }\R^N \, , \\[3pt]
\frac{\partial(\Delta U)}{\partial\nu}=h\, \mathop{\rm
  Tr}(U),&\text{in }\Omega \, .
  \end{cases}
\end{equation}
By a weak $\mathcal D$-solution to \eqref{eq:44} we mean some $U\in
\mathcal D$ such that
\[
\int_{\R^{N+1}_+}\Delta U(x,t) \Delta \varphi(x,t)\,dx\,dt=-\int_\Omega
h(x) \mathop{\rm Tr}U(x)
\mathop{\rm Tr}\varphi(x)\,dx
\]
for all $\varphi\in\mathcal D$ such that $\mathop{\rm supp}(\mathop{\rm Tr}\varphi)\subset\Omega$.
\begin{Theorem}\label{t:asym-ho}

\begin{enumerate}[\rm (i)]
\item  Let $U\in\mathcal D$, $U\not\equiv0$, be a nontrivial weak solution
  to \eqref{eq:44} for some $h\in C^1(\Omega)$, with
  $\Omega$ being an open bounded set in  $\R^N$ such that $0\in\Omega$.
 Then there exists
$\ell\in\N$ such that
\[
\lambda^{-\ell}U(\lambda z)\to
|z|^{\ell}
\sum_{m=1}^{M_\ell}\alpha_{\ell,m}Y_{\ell,m}\Big(\frac
z{|z|}\Big), \quad
\lambda^{-\ell}\Delta U(\lambda z)\to
|z|^{\ell}
\sum_{m=1}^{M_\ell}\alpha'_{\ell,m}Y_{\ell,m}\Big(\frac
z{|z|}\Big),
\]
strongly in $H^1(B_1^+)$, where
$\sum_{m=1}^{M_\ell}((\alpha_{\ell,m})^2+(\alpha'_{\ell,m})^2)\neq0$
and $\alpha_{\ell,m},\alpha'_{\ell,m}$ are given in
\eqref{eq:45}--\eqref{eq:46} with $V=\Delta U$.
\item  If $U\in \mathcal D$ is  a weak solution to \eqref{eq:44} such
  that
\begin{equation*}
U(z)=o(|z|^n)\quad\text{as }|z|\to 0\text{ for all $n\in
  \N$},
\end{equation*}
then $U\equiv 0$ in $B_R^+$.
\end{enumerate}
\end{Theorem}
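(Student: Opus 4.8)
The plan is to reduce the fourth order equation \eqref{eq:44} to the second order system \eqref{eq:system} by setting $V:=\Delta U$, so that both assertions follow from Theorems~\ref{t:asym} and \ref{t:sun-ext}; the real work is to check that, for $R>0$ small enough that $\overline{B_R'}\subset\Omega$ (so that $h|_{B_R'}\in C^1(B_R')$), the couple $(U,V)$ restricted to $B_R^+$ belongs to $H^1(B_R^+)\times H^1(B_R^+)$ and is a weak solution of \eqref{eq:system} in the sense fixed after \eqref{eq:system}.

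First I would pin down the interior structure. Testing the weak $\mathcal D$-formulation of \eqref{eq:44} with $\psi\in C^\infty_c(\R^{N+1}_+)$, for which $\mathop{\rm Tr}\psi\equiv0$, gives $\int_{\R^{N+1}_+}\Delta U\,\Delta\psi\,dz=0$; since $\Delta U=V$, this says $\Delta V=0$ in $\mathcal D'(\R^{N+1}_+)$, so $V$ is harmonic and smooth in $\R^{N+1}_+$, and $U$, solving $\Delta U=V$ with $\Delta^2U=0$, is real-analytic in $\R^{N+1}_+$. The point I expect to be delicate is the regularity up to the flat boundary. Using the trace and regularity theory for $\mathcal D$ of \cite{FF-prep} one has $U\in H^1_{\rm loc}(\overline{\R^{N+1}_+})$, with the homogeneous conormal condition $\partial U/\partial\nu=0$ on $\R^N$ inherited from $\mathcal T$. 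Testing the weak $\mathcal D$-formulation with $\psi\in\mathcal T$ having $\mathop{\rm supp}(\mathop{\rm Tr}\psi)\subset\Omega$, integrating by parts and using $\Delta V=0$ together with $\partial\psi/\partial\nu=0$ on $\R^N$, one identifies the conormal trace of $V$: $\partial V/\partial\nu=h\,\mathop{\rm Tr}U$ on $\Omega$. Since $N>3$ we have $\mathcal D^{3/2,2}(\R^N)\hookrightarrow L^2_{\rm loc}(\R^N)$ and $h\in C^1(\Omega)$, so this Neumann datum lies in $L^2_{\rm loc}(\Omega)$; Neumann elliptic regularity for the harmonic function $V$ then gives $V\in H^1(B_R^+)$. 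Moreover the $\mathcal D$-trace of $U$ coincides with its $H^1(B_R^+)$-trace on $B_R'$, so $u=\mathop{\rm Tr}U$ matches the notation of \eqref{eq:system}.

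With this regularity available I would verify the weak formulation of the system: given $\varphi\in H^1(B_R^+)$ with zero trace on $S_R^+$, integrating by parts over $B_R^+$ and using $\Delta U=V$, $\Delta V=0$ in $B_R^+$, $\partial U/\partial\nu=0$, $\partial V/\partial\nu=hu$ on $B_R'$, and $\varphi=0$ on $S_R^+$, one obtains
\[
\int_{B_R^+}\nabla U\cdot\nabla\varphi\,dz=-\int_{B_R^+}V\varphi\,dz,\qquad
\int_{B_R^+}\nabla V\cdot\nabla\varphi\,dz=\int_{B_R'}h\,u\,\mathop{\rm Tr}\varphi\,dx,
\]
which is exactly the weak form of \eqref{eq:system}; hence $(U,V)$ is a weak solution of \eqref{eq:system} on $B_R^+$.

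Finally, for (i): since $U\not\equiv0$ in $\mathcal D$ and $U$ is real-analytic in $\R^{N+1}_+$, the restriction $(U,V)$ is not identically $(0,0)$ on $B_R^+$, so Theorem~\ref{t:asym} applies and yields the convergence of $\lambda^{-\ell}U(\lambda\,\cdot)$ and $\lambda^{-\ell}\Delta U(\lambda\,\cdot)$ in $H^1(B_1^+)$ with coefficients \eqref{eq:45}--\eqref{eq:46} evaluated at $V=\Delta U$. For (ii): the hypothesis $U(z)=o(|z|^n)$ for every $n\in\N$ is precisely condition \eqref{eq:case2}, so Theorem~\ref{t:sun-ext} gives $U\equiv\Delta U\equiv0$ on $B_R^+$; since $\Delta^2U=0$ in the connected open set $\R^{N+1}_+$ and $U$ is real-analytic there, vanishing on the nonempty open set $B_R^+$ forces $U\equiv0$ on all of $\R^{N+1}_+$. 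The only genuinely nontrivial step is the boundary regularity of $\Delta U$ and the attendant identification of its conormal trace, i.e.\ the passage from the global fourth order weak formulation to the local second order weak formulation of the system.
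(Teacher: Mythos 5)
Your proposal is correct and takes essentially the same route as the paper: set $V=\Delta U$, verify that $(U,V)$ restricted to a half-ball is an $H^1(B_R^+)\times H^1(B_R^+)$ weak solution of \eqref{eq:system} with $(U,V)\neq(0,0)$, and then invoke Theorems \ref{t:asym} and \ref{t:sun-ext}. The only difference is presentational: where you sketch the boundary regularity of $U$ and of $V=\Delta U$ (trace theory plus Neumann regularity applied to the very weak formulation identifying $\partial V/\partial\nu=h\,\mathop{\rm Tr}U$), the paper simply cites \cite[Propositions 7.2 and 2.4]{FF-prep}, which is precisely the content you outline.
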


As mentioned above, a motivation for the study of higher order equations of type
\eqref{eq:44} and consequently of systems \eqref{eq:system} comes from
the interest in higher order fractional laplacians and their
characterization as a Dirichlet-to-Neumann map in the spirit of
\cite{CS}.

Let us consider
 the fractional laplacian $(-\Delta)^{3/2}$ defined as
\[
\widehat{(-\Delta)^{\frac32} u}(\xi)=|\xi|^{3}\widehat u(\xi) \, .
\]
We also consider the space
 $\mathcal D^{1/2,2}(\R^N)$ given by
the completion of $C^\infty_c(\R^N)$ with respect to the scalar product
\begin{equation*}
(u,v)_{\mathcal D^{1/2,2}(\R^N)}:=\int_{\R^N} |\xi|\, \widehat
u(\xi)\overline{\widehat v(\xi)} \, d\xi.
\end{equation*}

\begin{Theorem}\label{t:frac}
  For $N>3$, let $\Omega\subseteq\R^N$ be open, $a\in C^1(\Omega)$,
  and $u\in \mathcal D^{3/2,2}(\R^N)$ be a weak solution to the
  problem
\begin{equation}\label{eq:48}
(-\Delta)^{3/2}u=a\,u,\quad\text{in }\Omega,
\end{equation}
i.e.
\[
(u,\varphi)_{\mathcal
  D^{3/2,2}(\R^N)}=\int_{\Omega}a\,u\,\varphi\,dx\quad\text{for all }
\varphi\in C^\infty_c(\Omega).
\]
Let us also assume that
\begin{equation}\label{eq:53}
  (-\Delta)^{3/2}u\in (\mathcal D^{1/2,2}(\R^N))^\star,
\end{equation}
where $(\mathcal D^{1/2,2}(\R^N))^\star$ denotes the dual space of $\mathcal D^{1/2,2}(\R^N)$,  in the sense that the linear functional
  $\varphi\mapsto \int_{\R^N} |\xi|^{3} \widehat
  u(\xi)\overline{\widehat \varphi(\xi)} \, d\xi$,
  $\varphi\in C^\infty_c(\R^N)$, is continuous with respect to the
  norm induced by $\mathcal D^{1/2,2}(\R^N)$.
\begin{enumerate}[\rm (i)]
\item If $u$ vanishes at some point $x_0\in\Omega$ of infinite
order, i.e. if
\begin{equation}\label{eq:47}
u(x)=o(|x-x_0|^n)\quad\text{as }x\to x_0\text{ for every }n\in\N,
\end{equation}
then $u\equiv 0$ in $\Omega$.
\item If $u$ vanishes on a set $E\subset\Omega$ of positive Lebesgue
  measure, then $u\equiv 0$ in $\Omega$.
\end{enumerate}
\end{Theorem}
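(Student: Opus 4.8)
The plan is to lift the problem to the fourth-order extension \eqref{eq:44} and invoke Theorem \ref{t:asym-ho}. First I extend $u$: let $U\in\mathcal D$ be the weak $\mathcal D$-solution of $\Delta^2 U=0$ in $\R^{N+1}_+$ with $\partial_\nu U=0$ and $\mathrm{Tr}(U)=u$ on $\R^N$, which exists by the trace theory of \cite{FF-prep}; in Fourier variables $\widehat U(\xi,t)=\widehat u(\xi)(1+|\xi|t)e^{-|\xi|t}$, hence $\widehat{\Delta U}(\xi,t)=-2|\xi|^2\widehat u(\xi)e^{-|\xi|t}$, so that
\[
\mathrm{Tr}(\Delta U)=2\Delta u\ \text{ in }\R^N,\qquad -\partial_\nu(\Delta U)|_{t=0}=2(-\Delta)^{3/2}u .
\]
By \eqref{eq:48} this means $U$ solves \eqref{eq:44} with this $\Omega$ and $h=-2a$ (matching $K_{3/2}=\tfrac12$), and assumption \eqref{eq:53} guarantees that $v:=\mathrm{Tr}(\Delta U)=2\Delta u$ is a genuine function in $\mathcal D^{1/2,2}(\R^N)$ and that $(U,\Delta U)\in H^1(B_R^+)\times H^1(B_R^+)$ is a weak solution of the system \eqref{eq:system} on $B_R^+$ whenever $\overline{B_R'}\subset\Omega$. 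Thus Theorem \ref{t:asym-ho} is available around every point of $\Omega$.

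For (i) I take $x_0=0$ and argue by contradiction: assume $u\not\equiv0$. Since $\Delta^2U=0$ in the open set $\R^{N+1}_+$, classical unique continuation for the bi-Laplacian prevents $U$ from vanishing on any ball, so $(U,\Delta U)\not\equiv(0,0)$ near $0$ and Theorem \ref{t:asym-ho}(i) gives $\ell\in\N$ and a nontrivial couple of homogeneous profiles $(\widehat U,\widehat V)$. Passing to traces, the infinite-order vanishing of $u$ forces $\mathrm{Tr}\,\widehat U\equiv0$, i.e. $\sum_m\alpha_{\ell,m}Y_{\ell,m}(\cdot,0)\equiv0$ on $\mathbb{S}^{N-1}$; but the equatorial restriction is injective on equatorially symmetric degree-$\ell$ spherical harmonics (a homogeneous harmonic polynomial even in $t$ and vanishing on $\{t=0\}$ is divisible by $t^2$, and iterating this forces it to be zero), whence all $\alpha_{\ell,m}=0$, $\widehat U\equiv0$, and $\widehat V$ is a \emph{nonzero} homogeneous harmonic polynomial of degree $\ell$; in particular $v=2\Delta u$ has a nonzero homogeneous leading part of degree $\ell$ at $0$. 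I then peel: choosing a homogeneous polynomial $W$ of degree $\ell+2$, even in $t$, with $\Delta W=\widehat V$ and $\mathrm{Tr}(W)\equiv0$ (possible since $\Delta$ and the equatorial restriction are both onto on the relevant spaces of even-in-$t$ polynomials), the couple $(U-W,\Delta U-\widehat V)$ solves \eqref{eq:system} with the same $h$, has unchanged trace $u$, and by homogeneity has joint vanishing order strictly larger than $\ell$. Iterating, either the couple vanishes near $0$ after finitely many steps — then $U$ is a polynomial with zero trace, so $u\equiv0$, a contradiction — or the joint orders increase to $+\infty$ while $v$ keeps its nonzero leading part of degree $\ell$; the latter is excluded by combining $(-\Delta)^{1/2}v=-2(-\Delta)^{3/2}u=-2au$ (whose right side vanishes to infinite order at $0$) with the regularity $u\in H^{5/2}_{\mathrm{loc}}$ from \eqref{eq:53} through a Caccioppoli-type estimate, which upgrades infinite-order vanishing of $u$ to that of $\Delta u=\tfrac12 v$, contradicting $\mathrm{ord}_0 v=\ell<\infty$. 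Hence $U$ vanishes to infinite order at $0$, Theorem \ref{t:asym-ho}(ii) gives $U\equiv0$, and $u=\mathrm{Tr}(U)\equiv0$ on $\R^N$, a fortiori in $\Omega$.

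For (ii), assume $u\not\equiv0$ and choose a Lebesgue density point $x_0\in E$, so $|B_r(x_0)\setminus E|=o(r^N)$. By (i), $u$ does not vanish to infinite order at $x_0$, so the blow-up analysis above (peeling off the harmonic polynomial profiles until the $U$-component is nonzero) yields a finite order $\gamma=\mathrm{ord}_{x_0}u$ together with, after upgrading the $H^1$-convergence of the rescalings to $H^2_{\mathrm{loc}}$ by interior/boundary elliptic regularity for the biharmonic system, the two-sided estimates $c\,r^{N+2\gamma}\le\int_{B_r(x_0)}u^2$ and $\|u\|_{L^{2^*}(B_r(x_0))}^2\le C\,r^{2\gamma+N-3}$ with $2^*=\tfrac{2N}{N-3}$, for all small $r$. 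Since $u=0$ on $E$, Hölder's inequality gives $\int_{B_r(x_0)}u^2=\int_{B_r(x_0)\setminus E}u^2\le\|u\|_{L^{2^*}(B_r(x_0))}^2\,|B_r(x_0)\setminus E|^{3/N}=o(r^{N+2\gamma})$, contradicting the lower bound; hence $u\equiv0$ in $\Omega$.

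The main obstacle is the weak regularity of the potential ($a\in C^1$ only): then $u$ is merely $H^{5/2}_{\mathrm{loc}}$, so the implication ``$u$ flat at $x_0\Rightarrow\Delta u$ flat at $x_0$'' used to close the non-terminating case of (i), and the passage from $H^1$- to pointwise/$L^{2^*}$-asymptotics used in (ii), are genuine analytic points that must be settled by quantitative elliptic estimates for the extended fourth-order Neumann problem; keeping the infinite peeling cascade under control is the most delicate ingredient.
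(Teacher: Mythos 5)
Your reduction to the extension problem, the identification $h=-2a$, the fact \eqref{eq:53} makes $v=\operatorname{Tr}(\Delta U)=2\Delta u$ a genuine $\mathcal D^{1/2,2}$ function, and the first half of the blow-up step (flatness of $u$ forces $\widehat U\equiv 0$, hence the nontrivial profile sits in the $V$-component, whose equatorial trace is nontrivial) all match the paper. The divergence, and the genuine gap, is in how you close part (i). Your peeling cascade is not brought to a conclusion: the non-terminating branch is ``excluded'' by the claim that infinite-order vanishing of $u$ can be upgraded, via a Caccioppoli-type estimate for $(-\Delta)^{1/2}v=-2au$, to infinite-order vanishing of $\Delta u=\tfrac12 v$. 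That implication is exactly the crux and is neither proved nor plausible at the stated regularity: pointwise flatness of an $H^{5/2}_{\rm loc}$ function does not control the pointwise behaviour of its Laplacian, and a local energy estimate for the nonlocal equation does not obviously produce one (you yourself flag this as unsettled). Even the terminating branch is incomplete: it only gives $u\equiv 0$ near $0$, not a contradiction with $u\not\equiv 0$ in $\Omega$, unless you add a further unique-continuation step. The paper avoids the cascade entirely: since $2\Delta u=v$ as distributions on $\R^N$, one tests the scaled identity $2\Delta\widetilde u_\lambda=v_\lambda$ (with $\widetilde u_\lambda(x)=\lambda^{-\ell-2}u(\lambda x)$) against $\varphi\in C^\infty_c(B_1')$; flatness of $u$ kills $\int\widetilde u_\lambda\,\Delta\varphi$, while the strong $H^{1/2}(B_1')$ convergence $v_\lambda\to|x|^{\ell}\Psi(\cdot,0)\not\equiv 0$ keeps the right-hand side alive, a contradiction. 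You had all the ingredients for this two-line finish and did not need the peeling at all.

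Part (ii) inherits the same gap. Your lower bound $c\,r^{N+2\gamma}\le\int_{B_r(x_0)}u^2$ presupposes that the blow-up profile of the $U$-component at $x_0$ is nontrivial, which Theorem \ref{t:asym} does not guarantee: the nontrivial profile may lie entirely in the $V$-component, and your proposed fix (``peel until the $U$-component is nonzero'') is the same unproved cascade as in (i). The paper's route is different and self-contained: since $v=2\Delta u\in L^2_{\rm loc}$ gives $u\in H^2_{\rm loc}$, on the vanishing set $E$ one also has $\nabla u=0$ and $\Delta u=0$ a.e., so $v$ vanishes a.e.\ on a positive-measure subset $E'$; at a density point of $E'$ the blow-up of the \emph{pair} $(u,v)$ (with at least one of $\Psi_1,\Psi_2$ nontrivial on the equator), combined with H\"older's inequality and the Sobolev trace embedding $H^{1/2}\hookrightarrow L^{2N/(N-1)}$, yields the contradiction without ever needing a well-defined vanishing order for $u$ alone. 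As it stands, your argument for both (i) and (ii) rests on an unestablished flatness-propagation lemma, so the proof is incomplete.
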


\begin{remark} We observe that assumption \eqref{eq:53} is satisfied
  in each of the following cases:
\begin{enumerate}[(i)]
\item $u\in \mathcal D^{5/2,2}(\R^N)$;
\item $u\in \mathcal D^{3/2,2}(\R^N)$ solves \eqref{eq:48} with
$\Omega=\R^N$ and $a\in L^{N/2}(\R^N)\cap C^1(\R^N)$.
\end{enumerate}
\end{remark}

The proof of Theorem \ref{t:frac} is based on Theorem~\ref{t:asym-ho}
and the generalization of the Caffarelli-Silvestre extension to higher
order fractional laplacians given in \cite{Y}, see also
\cite{FF-prep}. Indeed, according to \cite{Y}, we have that if $u$
solves \eqref{eq:48}, then $u$ is the trace on $\R^N\times\{0\}$ of
some $U\in\mathcal D$ solving \eqref{eq:44}   with $h=-2a$.

We observe that the  unique continuation result stated in Theorem
\ref{t:frac} does not overlap with the results in \cite{seo0, seo1,
  seo2}. Indeed,  from one hand \cite{seo0, seo1,
  seo2} consider more general potentials; on the other hand we obtain
here
a \emph{strong} unique continuation and a unique continuation from
sets of positive measure, which are stronger results than the weak unique
continuation obtained in   \cite{seo0, seo1,
  seo2}. We also observe that we assume  that equation \eqref{eq:48}
is satisfied only on the set $\Omega$ and not in the whole $\R^N$.

The paper is organized as follows. In section \ref{sec:monot-argum} we
develop the monotonicity argument, proving in particular the existence
of a finite limit for the frequency function \eqref{eq:59} as
$r\to0^+$. In section \ref{sec:blow-up-analysis} we carry out a careful
blow-up analysis for scaled solutions, which allows proving Theorem
\ref{t:asym} and, as a consequence, Theorem \ref{t:sun-ext}. Finally
section \ref{sec:appl-fourth-order} is devoted to
applications of Theorem \ref{t:asym} to fourth order problems
\eqref{eq:44}  and higher order
  fractional problems \eqref{eq:48}, with the proofs of Theorems
  \ref{t:asym-ho} and \ref{t:frac}.

\section{The monotonicity argument}\label{sec:monot-argum}
For all $r\in(0,R)$ we define the functions
\begin{equation} \label{eq:D(r)}
D(r)=r^{-N+1} \left[\int_{B_r^+} \left(|\nabla U|^2+|\nabla
    V|^2+UV\right)dz
-\int_{B_r'} h(x)u(x)v(x)\, dx  \right]
\end{equation}
and
\begin{equation} \label{eq:H(r)}
H(r)=r^{-N}\int_{S_r^+} (U^2+V^2)\, dS.
\end{equation}
We define the space $\mathcal
D^{1,2}(\R^{N+1}_+)$ as the completion of the space
$C^\infty_c(\overline{\R^{N+1}_+})$ with respect to the norm
$$
\|U\|_{\mathcal D^{1,2}(\R^{N+1}_+)}:=\left(\int_{\R^{N+1}_+}
|\nabla U|^2 dz\right)^{1/2} \qquad \text{for any } U\in
C^\infty_c(\overline{\R^{N+1}_+}) \, .
$$
From \cite{BCdPS}, we have that there exists a constant
$K>0$ such that
\begin{equation} \label{eq:C-b}
K \|{\rm Tr\, }U\|_{\mathcal D^{\frac12,2}(\R^N)}\le \|U\|_{\mathcal
D^{1,2}(\R^{N+1}_+)} \qquad \text{for any } U\in \mathcal
D^{1,2}(\R^{N+1}_+)  .
\end{equation}
Here we are denoting as ${\rm Tr\, }$ the trace operator
${\rm Tr\, }:\mathcal D^{1,2}(\R^{N+1}_+)\to \mathcal D^{\frac12,2}(\R^N)$. We recall that, for all
$\gamma<\frac N2$ the following Sobolev embedding holds: there exists
a positive constant $S(N,\gamma)$ depending only on $N$ and $\gamma$
such that
\begin{equation} \label{eq:Sobolev-frazionario}
S(N,\gamma) \|u\|_{L^{2^*(N,\gamma)}(\R^N)}^2\le \|u\|_{\mathcal
D^{\gamma,2}(\R^{N})}^2 \qquad \text{for any } u\in
C^\infty_c(\R^N)
\end{equation}
where $2^*(N,\gamma)=2N/(N-2\gamma)$.
Moreover the following Hardy type inequality due to Herbst
\cite{herbst} holds: there exists $\Lambda>0$
\begin{equation}\label{eq:frac_hardy}
  \Lambda\int_{\R^N}\frac{\varphi^2(x)}{|x|}\,dx\leq\|\varphi\|_{\mathcal D^{1/2,2}(\R^N)}^2
  ,\quad\text{for all }\varphi\in \mathcal D^{1/2,2}(\R^N).
\end{equation}
Combining \eqref{eq:C-b} and \eqref{eq:Sobolev-frazionario} we obtain
that
\begin{equation} \label{eq:Sobolev-frazionario-bis}
S\big(N,\tfrac12\big)K^2 \, \|{\rm Tr\, }U\|_{L^{\frac{2N}{N-1}}(\R^N)}^2\le
\|U\|_{\mathcal D^{1,2}(\R^{N+1}_+)}^2 \qquad \text{for any }
U\in \mathcal D^{1,2}(\R^{N+1}_+) .
\end{equation}
Similarly, combining \eqref{eq:C-b} with \eqref{eq:frac_hardy}, we
infer
\begin{equation} \label{eq:Hardy-frazionario}
\Lambda K^2 \int_{\R^N} \frac{|{\rm
Tr\,}U|^2}{|x|} \, dx \le \|U\|_{\mathcal D^{1,2}(\R^{N+1}_+)}^2  \qquad \text{for any } U\in \mathcal
D^{1,2}(\R^{N+1}_+) .
\end{equation}
We recall the following lemmas from \cite{FF}, which provide Sobolev
and Hardy type trace inequalities with
boundary terms in $N+1$-dimensional half-balls.
\begin{Lemma}[\cite{FF} Lemma 2.6] \label{l:sobolev} For any $r>0$ and any $U\in H^1(B_r^+)$ we have
\begin{equation*}
\widetilde S \left(\int_{B_r'}
|u|^{\frac{2N}{N-1}}dx\right)^{\!\!\frac{N-1}{N}} \le \int_{B_r^+}
 |\nabla U|^2 dz+\frac{N-1}{2r} \int_{S_r^+}  U^2 dS
\end{equation*}
where $u={\rm Tr\, }U$ and $\widetilde S$ is a positive
constant depending only on $N$.
\end{Lemma}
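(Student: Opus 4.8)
The plan is to deduce the inequality from the global trace--Sobolev inequality \eqref{eq:Sobolev-frazionario-bis} on the half-space by extending $U$ suitably, after first reducing to the unit half-ball by scaling. By density it suffices to argue for $U\in C^\infty(\overline{B_r^+})$. Setting $V(z):=U(rz)$ for $z\in B_1^+$, one has $V\in H^1(B_1^+)$, its trace on $B_1'$ is $v(x)=u(rx)$, and changes of variables give $\int_{B_1'}|v|^{\frac{2N}{N-1}}dx=r^{-N}\int_{B_r'}|u|^{\frac{2N}{N-1}}dx$, $\int_{B_1^+}|\nabla V|^2dz=r^{1-N}\int_{B_r^+}|\nabla U|^2dz$ and $\int_{S_1^+}V^2\,dS=r^{-N}\int_{S_r^+}U^2\,dS$; multiplying the $r=1$ inequality for $V$ by $r^{N-1}$ then reproduces exactly the stated inequality for $U$, the factor $1/r$ in front of the boundary term arising precisely from this homogeneity. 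Hence it is enough to prove
\[
\widetilde S\Big(\int_{B_1'}|u|^{\frac{2N}{N-1}}dx\Big)^{\frac{N-1}{N}}\le \int_{B_1^+}|\nabla U|^2\,dz+\tfrac{N-1}{2}\int_{S_1^+}U^2\,dS .
\]

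Since $B_1^+$ is a bounded Lipschitz domain, there is a bounded extension operator $E\colon H^1(B_1^+)\to H^1(\R^{N+1}_+)$ with $EU=U$ on $B_1^+$; multiplying by a fixed cut-off $\chi\in C^\infty_c(\R^{N+1})$ with $\chi\equiv 1$ on $B_1$ and mollifying, one obtains $W:=\chi\,EU\in\mathcal D^{1,2}(\R^{N+1}_+)$ with $W=U$ on $B_1^+$, so that $\mathrm{Tr}\,W=u$ on $B_1'$, and
\[
\int_{\R^{N+1}_+}|\nabla W|^2\,dz\le C_1(N)\,\|U\|_{H^1(B_1^+)}^2
=C_1(N)\Big(\int_{B_1^+}|\nabla U|^2\,dz+\int_{B_1^+}U^2\,dz\Big).
\]
To replace the interior $L^2$--norm by a boundary one I would invoke the trace--Poincaré inequality
\[
\int_{B_1^+}U^2\,dz\le C_2(N)\Big(\int_{B_1^+}|\nabla U|^2\,dz+\int_{S_1^+}U^2\,dS\Big)\qquad\text{for all }U\in H^1(B_1^+),
\]
which follows from a standard compactness/contradiction argument: a normalized violating sequence converges strongly in $L^2(B_1^+)$ and in $L^2(S_1^+)$ (Rellich's theorem and compactness of the trace on the Lipschitz domain $B_1^+$) to a nonzero constant that vanishes on $S_1^+$, which is absurd. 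Combining the two displays gives $\int_{\R^{N+1}_+}|\nabla W|^2\,dz\le C_3(N)\big(\int_{B_1^+}|\nabla U|^2\,dz+\int_{S_1^+}U^2\,dS\big)$.

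Applying \eqref{eq:Sobolev-frazionario-bis} to $W$ and using $\|\mathrm{Tr}\,W\|_{L^{\frac{2N}{N-1}}(\R^N)}\ge\|u\|_{L^{\frac{2N}{N-1}}(B_1')}$ (as $\mathrm{Tr}\,W=u$ on $B_1'$), one gets
\[
S\big(N,\tfrac12\big)K^2\Big(\int_{B_1'}|u|^{\frac{2N}{N-1}}dx\Big)^{\frac{N-1}{N}}
\le C_3(N)\Big(\int_{B_1^+}|\nabla U|^2\,dz+\int_{S_1^+}U^2\,dS\Big).
\]
Since $N>3$ we have $1\le\frac{N-1}{2}$, so the right--hand side is bounded by $C_3(N)\big(\int_{B_1^+}|\nabla U|^2\,dz+\frac{N-1}{2}\int_{S_1^+}U^2\,dS\big)$, and the $r=1$ inequality holds with $\widetilde S:=S(N,\tfrac12)K^2/C_3(N)$, depending only on $N$; Step 1 then finishes the proof.

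The main technical point is to control the Dirichlet energy of the extension $W$ by the gradient norm of $U$ and the \emph{boundary} $L^2$--norm of $U$ alone, rather than by the full $H^1(B_1^+)$--norm; this is exactly what the trace--Poincaré inequality above supplies, and it is the step I expect to require the most care (in particular the qualitative spectral/compactness argument behind it). A more computational alternative, bypassing the abstract extension operator, is to integrate $\partial_t(U^2)$ along vertical segments from $B_1'$ up to the spherical cap $S_1^+$ and then run a Gagliardo--Nirenberg type argument in the $x$--variables; there the constant $\frac{N-1}{2}$ emerges naturally from the Jacobian of the projection onto the cap together with the interpolation exponents. For the applications in this paper any explicit $N$--dependent constant in front of $\int_{S_r^+}U^2\,dS$ would in fact be sufficient.
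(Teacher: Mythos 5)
Your argument is correct. The paper itself gives no proof of this lemma (it is quoted from \cite{FF}, Lemma 2.6), so the relevant comparison is with the proof behind that citation, which is different in spirit: there the inequality is obtained by an \emph{explicit} extension of $U$ across the hemisphere, namely the Kelvin-type reflection $W(z)=\big(\tfrac{r}{|z|}\big)^{N-1}U\big(\tfrac{r^2z}{|z|^2}\big)$ for $|z|\ge r$ (which preserves $\R^{N+1}_+$ and matches $U$ on $S_r^+$); an exact radial integration by parts gives $\int_{\R^{N+1}_+\setminus B_r^+}|\nabla W|^2\,dz=\int_{B_r^+}|\nabla U|^2\,dz+\frac{N-1}{r}\int_{S_r^+}U^2\,dS$, so applying the half-space trace inequality \eqref{eq:Sobolev-frazionario-bis} to $W$ yields the statement with an explicit constant $\widetilde S=\tfrac12 S(N,\tfrac12)K^2$ and explains exactly where the coefficient $\frac{N-1}{2r}$ comes from. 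You instead reduce to $r=1$ by scaling (your homogeneity computation is right and does reproduce $\frac{N-1}{2r}$), use a generic bounded extension operator plus a cut-off to land in $\mathcal D^{1,2}(\R^{N+1}_+)$ (legitimate: $\chi EU$ is a compactly supported $H^1$ function, and traces are local so $\mathop{\rm Tr}W=u$ on $B_1'$), replace the interior $L^2$ norm by the boundary one via a compactness-based trace--Poincar\'e inequality on the fixed Lipschitz domain $B_1^+$, and finally absorb constants using that $\widetilde S$ is unspecified and $\frac{N-1}{2}\ge1$ (available since $N>3$). The trade-off is clear: the reflection argument is constructive and delivers sharp, explicit constants tied to the specific coefficient $\frac{N-1}{2r}$, while your softer argument is more flexible (it would prove the same statement with any coefficient $c(N)/r$) but produces a non-explicit $\widetilde S$ through the compactness step -- which, as you note, is all the lemma and its uses in this paper require.
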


\begin{Lemma}[\cite{FF} Lemma 2.5]\label{l:hardyball} For any $r>0$ and any $U\in
  H^1(B_r^+)$ we have that
\begin{equation*}
\widetilde\Lambda \int_{B_r'} \frac{u^2}{|x|} \,
dx\le \int_{B_r^+} |\nabla U|^2 dz+\frac{N-1}{2r}
\int_{S_r^+} U^2 dS
\end{equation*}
where $u={\rm Tr\,} U$ and $\widetilde \Lambda$ is a
positive constant depending only on $N$.
\end{Lemma}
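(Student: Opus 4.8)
The plan is to pass to polar coordinates $z=\rho\theta$, $\rho=|z|\in(0,r)$, $\theta\in\mathbb{S}^{N}_+$, and to reduce the estimate to two elementary facts: a trace inequality on the compact manifold with boundary $\mathbb{S}^{N}_+$, and the classical Hardy inequality on the half-ball with a spherical remainder. The geometric point is that the flat disk $B_r'$ is exactly the cone over the equator $\partial\mathbb{S}^{N}_+$, so that, writing $dz=\rho^{N}\,dS(\theta)\,d\rho$,
\[
\int_{B_r'}\frac{u^2}{|x|}\,dx=\int_0^r\rho^{N-2}\Big(\int_{\partial\mathbb{S}^{N}_+}U(\rho\omega)^2\,dS(\omega)\Big)\,d\rho .
\]
It is enough to argue for $U\in C^\infty(\overline{B_r^+})$ and then to pass to a general $U\in H^1(B_r^+)$ by density, using the trace theorem to pass to the limit in the right-hand side and Fatou's lemma (along an a.e.-convergent subsequence of the traces) in the left-hand side; this also guarantees that the slices $U(\rho\,\cdot)$ lie in $H^1(\mathbb{S}^{N}_+)$ for a.e.\ $\rho$, with boundary trace $u(\rho\,\cdot)$.

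First I would invoke the trace embedding $H^1(\mathbb{S}^{N}_+)\hookrightarrow L^2(\partial\mathbb{S}^{N}_+)$, valid because $\mathbb{S}^{N}_+$ is a compact Riemannian manifold with smooth boundary: there exist $C_1,C_2>0$ depending only on $N$ such that $\int_{\partial\mathbb{S}^{N}_+}\phi^2\,dS\le C_1\int_{\mathbb{S}^{N}_+}|\nabla_{\mathbb{S}^{N}}\phi|^2\,dS+C_2\int_{\mathbb{S}^{N}_+}\phi^2\,dS$ for all $\phi\in H^1(\mathbb{S}^{N}_+)$. Applying this with $\phi=U(\rho\,\cdot)$, multiplying by $\rho^{N-2}$ and integrating over $(0,r)$, and recalling the polar forms $\int_0^r\rho^{N-2}\int_{\mathbb{S}^{N}_+}|\nabla_{\mathbb{S}^{N}}(U(\rho\,\cdot))|^2\,dS\,d\rho\le\int_{B_r^+}|\nabla U|^2\,dz$ and $\int_0^r\rho^{N-2}\int_{\mathbb{S}^{N}_+}U(\rho\,\cdot)^2\,dS\,d\rho=\int_{B_r^+}\frac{U^2}{|z|^2}\,dz$, I obtain
\[
\int_{B_r'}\frac{u^2}{|x|}\,dx\le C_1\int_{B_r^+}|\nabla U|^2\,dz+C_2\int_{B_r^+}\frac{U^2}{|z|^2}\,dz .
\]

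It then remains to absorb the weighted Hardy term into the right-hand side. Testing the divergence identity $\int_{B_r^+}\frac{z}{|z|^2}\cdot\nabla(U^2)\,dz=\frac1r\int_{S_r^+}U^2\,dS-(N-1)\int_{B_r^+}\frac{U^2}{|z|^2}\,dz$ --- in which the flat part of $\partial B_r^+$ contributes nothing, since $z/|z|^2$ is tangent to $\{t=0\}$ --- against Cauchy--Schwarz and Young's inequality gives the half-ball Hardy inequality $\big(\tfrac{N-1}{2}\big)^2\int_{B_r^+}\frac{U^2}{|z|^2}\,dz\le\int_{B_r^+}|\nabla U|^2\,dz+\frac{N-1}{2r}\int_{S_r^+}U^2\,dS$. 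Substituting this into the previous estimate yields the assertion with $\widetilde\Lambda=\big(C_1+\tfrac{4C_2}{(N-1)^2}\big)^{-1}$, which depends only on $N$.

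I expect the only genuinely delicate point to be the approximation and trace bookkeeping in passing from smooth $U$ to $U\in H^1(B_r^+)$: one must verify, via Fubini applied to the polar decomposition of $\|U\|_{H^1(B_r^+)}^2$, that almost every slice $U(\rho\,\cdot)$ belongs to $H^1(\mathbb{S}^{N}_+)$ with boundary trace $u(\rho\,\cdot)$, and then pass to the limit using Fatou's lemma on the left and continuity of the trace maps on the right. A secondary point requiring care is obtaining precisely the coefficient $\frac{N-1}{2r}$ in front of the boundary term, which is why the argument relies on the sharp half-ball Hardy inequality above rather than on a cruder extension of $U$ to $\mathbb{R}^{N+1}_+$ (the latter would produce $\int_{B_r^+}U^2\,dz$, not the boundary integral, on the right-hand side).
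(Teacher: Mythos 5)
Your argument is correct. The slicing identity $\int_{B_r'}\frac{u^2}{|x|}\,dx=\int_0^r\rho^{N-2}\int_{\partial\mathbb{S}^N_+}U(\rho\omega)^2\,dS\,d\rho$ is right, the slice-wise trace embedding $H^1(\mathbb{S}^N_+)\hookrightarrow L^2(\partial\mathbb{S}^N_+)$ has constants depending only on $N$, the polar bookkeeping ($\rho^{N-2}\cdot\rho^2=\rho^N$ for the angular gradient, $dz=\rho^N d\rho\,dS$) is consistent, and the half-ball Hardy inequality $\big(\tfrac{N-1}{2}\big)^2\int_{B_r^+}\frac{U^2}{|z|^2}\,dz\le\int_{B_r^+}|\nabla U|^2\,dz+\frac{N-1}{2r}\int_{S_r^+}U^2\,dS$ follows as you say from the divergence identity for $z/|z|^2$ (the flat part contributes nothing) plus Cauchy--Schwarz and Young with weight $\tfrac{N-1}{2}$; the only extra care is to excise a small half-ball $B_\e^+$ before applying the divergence theorem, noting that the resulting inner boundary term has a favorable sign, and your final constant $\widetilde\Lambda=\big(C_1+\tfrac{4C_2}{(N-1)^2}\big)^{-1}$, as well as the whole chain, is manifestly $r$-independent (consistently with the scale invariance of the inequality). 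Note, however, that this is a genuinely different route from the source: the paper does not prove the lemma but quotes it verbatim from \cite{FF} (Lemma 2.5), where the inequality is obtained by extending $U$ from the half-ball to the whole half-space (a Kelvin-type reflection across $S_r^+$, whose Dirichlet-energy cost produces exactly the boundary term $\frac{N-1}{2r}\int_{S_r^+}U^2\,dS$) and then invoking the global trace Hardy inequality of Herbst type, which links $\widetilde\Lambda$ to the sharp half-space constant. Your proof is more elementary and self-contained (compact-manifold trace embedding plus an interior Hardy inequality with spherical remainder), at the price of a non-explicit $\widetilde\Lambda$, which is all the lemma requires; the extension approach buys a constant with a sharper interpretation. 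Your closing remark about why a crude extension would only give $\int_{B_r^+}U^2\,dz$ is well taken, though the reflection used in \cite{FF} is precisely engineered to avoid that loss.
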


The following Poincar\'e type
inequality on half-balls will be useful in the sequel.

\begin{Lemma}\label{l:Poincare}
  For every $r>0$ and $W\in H^1(B_r^+)$ we have that
\[
\frac{N}{r^2}\int_{B_r^+}W^2(z)\,dz\leq \frac{1}{r}\int_{S_r^+}W^2(z)\,dS+\int_{B_r^+}|\nabla
W(z)|^2\,dz.
\]
\end{Lemma}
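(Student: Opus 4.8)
The plan is to integrate a divergence identity over $B_r^+$, using that the flat part $B_r'$ of the boundary gives no contribution because $z\cdot\nu=0$ there, and then to close the estimate by Young's inequality together with the trivial bound $|z|\le r$ on $B_r^+$.

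By density it is enough to prove the inequality for $W\in C^\infty(\overline{B_r^+})$: since $C^\infty(\overline{B_r^+})$ is dense in $H^1(B_r^+)$ (a Lipschitz domain) and all three terms appearing in the statement are continuous with respect to the $H^1(B_r^+)$ norm --- the boundary term being controlled by the continuity of the trace operator $H^1(B_r^+)\to L^2(S_r^+)$ --- the general case $W\in H^1(B_r^+)$ then follows by passing to the limit. For smooth $W$, I would start from the pointwise identity
\[
\dive\!\big(z\,W^2\big)=(N+1)\,W^2+2\,W\,(z\cdot\nabla W)\qquad\text{in }B_r^+ .
\]
Integrating over $B_r^+$ and applying the divergence theorem, the boundary $\partial B_r^+=S_r^+\cup B_r'$ contributes $r\int_{S_r^+}W^2\,dS$ on $S_r^+$, where $\nu=z/r$ and hence $z\cdot\nu=r$, and $0$ on $B_r'$, where $\nu=(0,\dots,0,-1)$ and $z=(x,0)$ so that $z\cdot\nu=0$; therefore
\[
(N+1)\int_{B_r^+}W^2\,dz+2\int_{B_r^+}W\,(z\cdot\nabla W)\,dz=r\int_{S_r^+}W^2\,dS .
\]
Next I would bound the middle term by Young's inequality and $|z|\le r$:
\[
-2\int_{B_r^+}W\,(z\cdot\nabla W)\,dz\le\int_{B_r^+}W^2\,dz+\int_{B_r^+}|z|^2\,|\nabla W|^2\,dz\le\int_{B_r^+}W^2\,dz+r^2\int_{B_r^+}|\nabla W|^2\,dz .
\]
Combining the last two displays gives $N\int_{B_r^+}W^2\,dz\le r\int_{S_r^+}W^2\,dS+r^2\int_{B_r^+}|\nabla W|^2\,dz$, and dividing by $r^2$ yields the claim.

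I do not foresee a genuine obstacle: the only slightly delicate points are the density reduction to smooth $W$ (standard, via continuity of the trace on $H^1(B_r^+)$) and the justification of the divergence theorem on the half-ball with the vanishing contribution from the flat face $B_r'$. Equivalently, one may pass to polar coordinates $z=\rho\theta$ with $\rho\in(0,r)$, $\theta\in\mathbb{S}^N_+$, reduce to the one-dimensional weighted inequality $\frac{N}{r^2}\int_0^r\rho^N f(\rho)^2\,d\rho\le r^{N-1}f(r)^2+\int_0^r\rho^N f'(\rho)^2\,d\rho$ for $f(\rho)=W(\rho\theta)$ (obtained by integrating $\rho^N f^2$ against the primitive $\rho^{N+1}/(N+1)$ and then using $-2\rho^{N+1}ff'\le\rho^N f^2+\rho^{N+2}(f')^2$ together with $\rho^{N+2}\le r^2\rho^N$ on $(0,r)$), and finally integrate over $\theta\in\mathbb{S}^N_+$ after discarding the tangential part of $\nabla W$.
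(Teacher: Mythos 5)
Your proof is correct and follows essentially the same route as the paper: the authors also integrate the identity $\dive(W^2 z)=(N+1)W^2+2W\,\nabla W\cdot z$ over $B_r^+$, use the divergence theorem (with the flat face contributing nothing since $z\cdot\nu=0$ there), and close with Young's inequality and $|z|\le r$. The extra remarks on density and the polar-coordinate reformulation are fine but not needed beyond what the paper's one-display proof already records.
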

\begin{proof}
  From the Divergence Theorem we have that
\begin{align*}
(N+1)\int_{B_r^+}W^2(z)\,dz&=
\int_{B_r^+}\left(\dive(W^2z)-2W\nabla W\cdot z\right)\,dz\\
&=r\int_{S_r^+}W^2(z)\,dS-2 \int_{B_r^+}W\nabla W\cdot z\,dz\\
&\leq r\int_{S_r^+}W^2(z)\,dS+\int_{B_r^+}W^2(z)\,dz+r^2 \int_{B_r^+}|\nabla W|^2\,dz
\end{align*}
thus yielding the stated inequality.
\end{proof}

The following lemma contains a Pohozaev type identity for solutions to
system \eqref{eq:system}.
\begin{Lemma} Let $(U,V)\in H^1(B_R^+)\times H^1(B_R^+)$ be  a weak solution to
\eqref{eq:system}.
Then for a.e. $r\in(0,R)$
\begin{equation} \label{Pohozaev-1}
\int_{B_r^+} \left(|\nabla U|^2+|\nabla
V|^2+UV\right)dz=\int_{S_r^+} \left(\frac{\partial U}{\partial
\nu} U+\frac{\partial V}{\partial \nu}V\right)dS  +\int_{B_r'}
h(x)u(x)v(x)\, dx
\end{equation}
and
\begin{align} \label{Pohozaev-2}
-\frac{N-1}2 & \int_{B_r^+}  \left(|\nabla U|^2+|\nabla
V|^2\right)dz+\int_{B_r^+}  V(z\cdot \nabla U)\, dz +\frac r2
\int_{S_r^+}  \left(|\nabla U|^2+|\nabla V|^2\right)dS \\
\notag & =\int_{B_r'} h(x)u(x)(x\cdot \nabla_x v)\,
dx+ r\int_{S_r^+}  \left(\left|\frac{\partial
U}{\partial\nu}\right|^2+\left|\frac{\partial
V}{\partial\nu}\right|^2\right)dS
\end{align}
where $u(x):=U(x,0)$ and $v(x)=V(x,0)$.
\end{Lemma}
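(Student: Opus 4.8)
The plan is to derive both identities as consequences of testing the weak formulation against suitable multipliers and then localizing to the half-ball $B_r^+$. For \eqref{Pohozaev-1}, I would start from the weak formulation of \eqref{eq:system}, take $\varphi=U$ in the first equation and $\varphi=V$ in the second, but work on $B_r^+$ rather than on all of $B_R^+$, picking up boundary contributions on $S_r^+$. Concretely, since $\Delta U=V$ and $\Delta V=0$ in $B_r^+$, the Divergence Theorem gives $\int_{B_r^+}\nabla U\cdot\nabla U\,dz=-\int_{B_r^+}(\Delta U)U\,dz+\int_{\partial B_r^+}\frac{\partial U}{\partial\nu}U\,dS$; splitting $\partial B_r^+=S_r^+\cup B_r'$ and using $\frac{\partial U}{\partial\nu}=0$ on $B_r'$ together with $\Delta U=V$, we get $\int_{B_r^+}|\nabla U|^2\,dz=-\int_{B_r^+}UV\,dz+\int_{S_r^+}\frac{\partial U}{\partial\nu}U\,dS$. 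Doing the same with $V$, using $\Delta V=0$ in the interior and $\frac{\partial V}{\partial\nu}=hu$ on $B_r'$, yields $\int_{B_r^+}|\nabla V|^2\,dz=\int_{S_r^+}\frac{\partial V}{\partial\nu}V\,dS+\int_{B_r'}h(x)u(x)v(x)\,dx$. Adding the two identities produces \eqref{Pohozaev-1}. The "a.e. $r$" qualifier is needed because traces of $\nabla U,\nabla V$ on $S_r^+$ make sense only for a.e. $r$ by the coarea formula and an $H^1_{\rm loc}$ argument; a standard approximation/Fubini argument handles the rigorous justification.

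For \eqref{Pohozaev-2}, the multiplier is the Pohozaev (dilation) field $z\cdot\nabla U$ and $z\cdot\nabla V$. I would test the first equation of \eqref{eq:system} against $\varphi=z\cdot\nabla U$ on $B_r^+$, the second against $\varphi=z\cdot\nabla V$, and add. The key algebraic identity is the standard one: for a function $W$ with $\Delta W=0$ one has $\int_{B_r^+}\nabla W\cdot\nabla(z\cdot\nabla W)\,dz=\frac{-(N-1)}2\int_{B_r^+}|\nabla W|^2\,dz+\frac r2\int_{S_r^+}|\nabla W|^2\,dS-r\int_{S_r^+}\big|\frac{\partial W}{\partial\nu}\big|^2\,dS$ plus boundary terms on $B_r'$ coming from the Neumann data. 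Here one must be careful with the dimensional counting: the relevant dimension of $B_r^+\subset\R^{N+1}$ is $N+1$, so the bulk coefficient from $\dive$ of $|\nabla W|^2 z$ is $N+1$; after the usual manipulation $|\nabla W\cdot\nabla(z\cdot\nabla W)|=|\nabla W|^2+\frac12 z\cdot\nabla(|\nabla W|^2)$ and integration by parts of the last term, the $\frac{N+1}2$ combines with the $1$ to give $\frac{N-1}2$ with the correct sign. The extra terms are: for $U$, the inhomogeneity $-\int_{B_r^+}(z\cdot\nabla U)\,\Delta U\,dz=-\int_{B_r^+}V\,(z\cdot\nabla U)\,dz$ (moved to the left-hand side as $+\int_{B_r^+}V(z\cdot\nabla U)\,dz$), and no $B_r'$ boundary term since $\frac{\partial U}{\partial\nu}=0$ and on $B_r'$ the outward normal is $-{\bf e}_{N+1}$ so the tangential dilation field $x\cdot\nabla_x U$ is what survives but is multiplied by $\partial_\nu U=0$; for $V$, the homogeneity $\Delta V=0$ kills the bulk inhomogeneity, but the Neumann condition $\frac{\partial V}{\partial\nu}=hu$ on $B_r'$ contributes $\int_{B_r'}h(x)u(x)\,(x\cdot\nabla_x v)\,dx$ — here one uses that on the flat boundary the dilation field restricted to $B_r'$ is precisely the tangential field $x\cdot\nabla_x$, and $v=\mathop{\rm Tr}V$.

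The main obstacle, and where care is required, is the rigorous treatment of the boundary terms on the flat part $B_r'$: one must check that the contribution of $|\nabla U|^2$ and $|\nabla V|^2$ along $B_r'$ in the Pohozaev manipulation vanishes (this uses $\partial_\nu U=0$ on $B_r'$ for the $U$-part, but for the $V$-part the term $\frac r2\int_{B_r'}|\nabla V|^2 (z\cdot\nu)\,dS$ does not obviously vanish — however $z\cdot\nu=-t=0$ on $B_r'$, so it does), and that the cross term $\int_{B_r'}\partial_\nu V\,(z\cdot\nabla V)\,dS$ reduces to $\int_{B_r'}hu\,(x\cdot\nabla_x v)\,dx$ using $z\cdot\nabla V=x\cdot\nabla_x v+ t\,\partial_t V$ and $t=0$ on $B_r'$. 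A secondary technical point is the regularity needed to make $z\cdot\nabla U$ an admissible test function and to evaluate $\int_{S_r^+}|\nabla U|^2\,dS$; by elliptic regularity $(U,V)$ is smooth in the interior of $B_R^+$ and $H^2_{\rm loc}$ up to the flat boundary (using $h\in C^1$ and a bootstrap through the system), so the identities hold classically on $B_\rho^+$ for $\rho<R$, and then for a.e. $r$ by the coarea formula applied to the $L^1_{\rm loc}$ functions $|\nabla U|^2,|\nabla V|^2$ on spheres — this is exactly the standard approximation argument used e.g. in \cite{FF}, which I would invoke rather than reprove.
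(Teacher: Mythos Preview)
Your proposal is correct and follows essentially the same approach as the paper: test the two equations with $U,V$ for \eqref{Pohozaev-1} and with $z\cdot\nabla U,\ z\cdot\nabla V$ for \eqref{Pohozaev-2}, and integrate by parts over $B_r^+$. The paper is slightly more explicit about the regularity step you call ``bootstrap'': it obtains $U\in H^2(B_r^+)$ by even reflection across $\{t=0\}$ (using $\partial_\nu U=0$) and then $V\in H^2(B_r^+)$ from Neumann boundary regularity, since $hu\in H^{1/2}(B_r')$; after that, $z\cdot\nabla U,\ z\cdot\nabla V\in H^1(B_r^+)$ are admissible test functions and the computation you outline goes through verbatim.
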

\begin{proof}
  Identity \eqref{Pohozaev-1} follows by testing the equation for $U$ with $U$
  and the equation for $V$ with $V$ and by integrating by parts over
  $B_r^+$.

To prove \eqref{Pohozaev-2} we first observe that $U,V\in
H^2(B_r^+)$ for all $r\in(0,R)$. Indeed, since $\frac{\partial U}{\partial \nu}= 0$ on $B_R'$, the
function
\[
\widetilde U(x,t)=
\begin{cases}
  U(x,t),&\text{if }t>0,\\
  U(x,-t),&\text{if }t<0,
\end{cases}
\]
satisfies the equation $\Delta \widetilde U=\widetilde V$, where $\widetilde V(x,t)=
  V(x,t)$ if $t>0$ and $\widetilde V(x,t)=V(x,-t)$ if $t<0$. Since
  $\widetilde V\in L^2(B_R)$, by classical elliptic regularity
   we have that $\widetilde U\in H^2(B_r)$  and  hence $U\in
H^2(B_r^+)$ for all $r\in(0,R)$. By the Gagliardo Trace Theorem we
have that $u=\mathop{\rm Tr}U\in H^{1/2}(B_r')$ for all $r\in(0,R)$ . Since $h\in
C^1(B_R')$ we have that $hu \in  H^{1/2}(B_r')$ for all
$r\in(0,R)$. Therefore, for all $r\in(0,R)$, $V$ satisfies
\[
\begin{cases}
  \Delta V=0,&\text{in }B_r^+,\\
\frac{\partial V}{\partial \nu}\in H^{1/2}(B_r').&
\end{cases}
\]
From elliptic regularity under Neumann boundary conditions (see in
particular \cite[Theorem 8.13]{Salsa}) we conclude that $V\in H^2(B_r^+)$ for all $r\in(0,R)$.

Since, for every $r\in(0,R)$, $U,V\in
H^2(B_r^+)$, we can test  the equation for $U$ with $\nabla U\cdot z$
(which belongs to $H^1(B_r^+)$)
  and the equation for $V$ with $\nabla V\cdot z$
(which belongs to $H^1(B_r^+)$), thus obtaining \eqref{Pohozaev-2}.
\end{proof}

\begin{Lemma}\label{l:Hpos}
 Let $(U,V)\in H^1(B_R^+)\times H^1(B_R^+)$ be  a weak solution to
\eqref{eq:system} such that $(U,V)\not=(0,0)$ (i.e. $U$ and $V$ are not
both identically null).
Let $D=D(r)$ and $H=H(r)$ be the functions
defined in \eqref{eq:D(r)} and \eqref{eq:H(r)}. Then there exists
$r_0\in(0,R)$ such that $H(r)>0$ for any $r\in (0,r_0)$.
\end{Lemma}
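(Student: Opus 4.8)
The plan is to argue by contradiction: if the set $\{r \in (0,R) : H(r) = 0\}$ accumulates at $0$, we want to deduce $(U,V) = (0,0)$, contradicting the hypothesis. First I would observe that $H(r)$ is a continuous (indeed absolutely continuous) function of $r$ on $(0,R)$, since $U,V \in H^1(B_R^+)$ implies their traces on the spheres $S_r^+$ depend continuously on $r$ in $L^2$; this follows from the coarea formula and Fubini, writing $\int_{S_r^+}(U^2+V^2)\,dS$ via the change of variables $z = r\theta$ as $r^N\int_{\mathbb S^N_+}(U^2+V^2)(r\theta)\,dS(\theta)$. Note $H(r) = 0$ for some $r$ means exactly that $U \equiv V \equiv 0$ on $S_r^+$, i.e. on the whole sphere of radius $r$ inside the half-space.

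The key step is to exploit equation \eqref{eq:system} to propagate vanishing. If $H(r_n) = 0$ for a sequence $r_n \to 0^+$, then $U = V = 0$ on each $S_{r_n}^+$. I would like to conclude $V \equiv 0$ in $B_{r_n}^+$ and then $U \equiv 0$ there. For $V$: since $\Delta V = 0$ in $B_{r_n}^+$ with the Neumann condition $\frac{\partial V}{\partial \nu} = hu$ on $B_{r_n}'$, testing the equation against $V$ itself over $B_{r_n}^+$ via \eqref{Pohozaev-1} and using that $V$ vanishes on $S_{r_n}^+$ is not immediately enough because of the coupling term. A cleaner route: use the even reflection $\widetilde V$ across $t=0$ (as in the Pohozaev lemma, legitimate since regularity gives $V \in H^2$), which is then a distributional solution of $\Delta \widetilde V = g$ on the full ball $B_{r_n}$, where $g$ is (twice) the boundary datum $hu$ spread as a measure on $B_{r_n}'$ — more precisely, weak formulation shows $\int_{B_{r_n}}\nabla \widetilde V \cdot \nabla \varphi = 2\int_{B_{r_n}'} hu\,\mathrm{Tr}\,\varphi$ for $\varphi$ vanishing on $\partial B_{r_n}$. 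Taking the infimum over $n$ and using $\int_{B_{r_n}^+}(U^2+V^2+|\nabla U|^2 + |\nabla V|^2)\,dz \to 0$ as $r_n \to 0$ together with the energy identity \eqref{Pohozaev-1} on $B_{r_n}^+$, which reads
\[
\int_{B_{r_n}^+}\bigl(|\nabla U|^2 + |\nabla V|^2 + UV\bigr)\,dz = \int_{S_{r_n}^+}\Bigl(\tfrac{\partial U}{\partial\nu}U + \tfrac{\partial V}{\partial\nu}V\Bigr)\,dS + \int_{B_{r_n}'}huv\,dx,
\]
one sees the surface term vanishes since $U = V = 0$ on $S_{r_n}^+$, giving $\int_{B_{r_n}^+}(|\nabla U|^2 + |\nabla V|^2)\,dz = -\int_{B_{r_n}^+}UV\,dz + \int_{B_{r_n}'}huv\,dx$. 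The right-hand side is controlled by $\|U\|_{L^2(B_{r_n}^+)}\|V\|_{L^2(B_{r_n}^+)}$ plus, via the trace-Hardy inequality Lemma \ref{l:hardyball} (or Lemma \ref{l:sobolev}), a quantity bounded by $C r_n \bigl(\int_{B_{r_n}^+}(|\nabla U|^2+|\nabla V|^2)\,dz + \tfrac1{r_n}\int_{S_{r_n}^+}(U^2+V^2)\,dS\bigr)$. Combining this with the Poincaré inequality Lemma \ref{l:Poincare} to absorb the $L^2$-terms of $U,V$ into the Dirichlet energy (using again that the sphere term vanishes) gives, for $r_n$ small enough, an inequality of the form $c\int_{B_{r_n}^+}(|\nabla U|^2 + |\nabla V|^2)\,dz \le C r_n \int_{B_{r_n}^+}(|\nabla U|^2 + |\nabla V|^2)\,dz$, forcing $\nabla U = \nabla V = 0$ on $B_{r_n}^+$, hence $U, V$ constant on $B_{r_n}^+$, hence $\equiv 0$ there (again using vanishing on $S_{r_n}^+$). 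Letting $r_n \to 0$ fails to give global vanishing directly, but the vanishing on a sequence of balls shrinking to $0$ is not yet $U \equiv V \equiv 0$ on $B_R^+$ — however it does contradict $(U,V) \ne (0,0)$ once we upgrade: since $U,V$ are (real-)analytic in the interior of $B_R^+$ by elliptic regularity (both $\Delta V = 0$ and $\Delta U = V$ with $V$ harmonic), vanishing on any open subset — in particular on the interior of $B_{r_1}^+$ for the largest such $r_1$ — forces $U \equiv V \equiv 0$ on the connected set $B_R^+$, the desired contradiction.

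The main obstacle I anticipate is making the absorption argument quantitative near $t = 0$: the coupling term $\int_{B_{r_n}'}huv\,dx$ lives on the flat boundary and must be estimated without losing a power of $r_n$, which is exactly what Lemmas \ref{l:sobolev}–\ref{l:hardyball} are designed for, but one must be careful that the constant $C$ depends only on $N$ and $\|h\|_{L^\infty}$ and not on $n$, and that the sphere term $\tfrac{N-1}{2r_n}\int_{S_{r_n}^+}(U^2+V^2)\,dS$ appearing in those lemmas genuinely vanishes (it does, by the assumption $H(r_n)=0$). A secondary subtlety is justifying the continuity/absolute continuity of $r \mapsto H(r)$ and the a.e. validity of the energy identity at the chosen radii $r_n$ — one should select $r_n$ within the full-measure set where \eqref{Pohozaev-1} holds, which is possible since $\{r : H(r) = 0\}$, if it accumulates at $0$, can be intersected with that full-measure set to still yield a sequence tending to $0$. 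Alternatively, and perhaps more cleanly, one avoids the contradiction setup entirely: one shows directly that $H$ cannot vanish on a sequence $r_n\to 0^+$ by the above energy estimate, concluding that $H(r) > 0$ on some punctured neighbourhood $(0,r_0)$ of $0$.
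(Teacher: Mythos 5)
Your proposal is correct and follows essentially the same route as the paper's proof: arguing by contradiction at a sequence of radii where $H$ vanishes, using identity \eqref{Pohozaev-1} with the sphere terms dropping out, absorbing the coupling term $\int_{B_{r_n}'}huv\,dx$ and the $UV$ term via Lemmas \ref{l:hardyball} and \ref{l:Poincare} to force $U\equiv V\equiv 0$ on a small half-ball, and then invoking classical unique continuation (which you justify, reasonably, through interior analyticity of solutions of $\Delta V=0$, $\Delta U=V$). The digression on even reflection and the worry about choosing $r_n$ inside the a.e.\ set where \eqref{Pohozaev-1} holds are unnecessary (the identity extends to every $r$ by the $H^2$ regularity of $U,V$) but harmless.
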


\begin{proof} Suppose by contradiction that for any $r_0>0$ there
exists $r\in (0,r_0)$ such that $H(r)=0$. Then there exists a sequence
$r_n\to 0^+$ such that  $H(r_n)=0$, i.e. $U=V=0$ on $S_{r_n}^+$. From
\eqref{Pohozaev-1} it follows that
\begin{equation}\label{eq:2}
\int_{B_{r_n}^+} \left(|\nabla U|^2+|\nabla
V|^2+UV\right)dz=\int_{B_{r_n'}}
h(x)u(x)v(x)\, dx.
\end{equation}
 From \eqref{eq:2}, Lemma \ref{l:Poincare}, and Lemma
\ref{l:hardyball} it follows that
\begin{align*}
\left(1-\frac{r_n^2}{2N}\right)& \int_{B_{r_n}^+} \left(|\nabla U|^2+|\nabla
V|^2\right)dz\leq \int_{B_{r_n}^+} \left(|\nabla U|^2+|\nabla
V|^2+UV\right)dz\\
&=\int_{B_{r_n'}}h(x)u(x)v(x)\, dx\leq {\rm
  const\,}r_n\left(\int_{B_{r_n}'}\frac{u^2}{|x|}\,dx+
\int_{B_{r_n}'}\frac{v^2}{|x|}\,dx\right)
\\
&\leq {\rm
  const\,}r_n
\int_{B_{r_n}^+} \left(|\nabla U|^2+|\nabla
V|^2\right)dz.
\end{align*}
Since $r_n\to0^+$ as $n\to+\infty$, the above inequality implies that $\int_{B_{r_n}^+} \left(|\nabla U|^2+|\nabla
V|^2\right)dz=0$ for $n$ sufficiently large. Hence, in view of
Lemma \ref{l:Poincare}, $U\equiv V\equiv 0$ in $B_{r_n}^+$. Classical unique
continuation principles then imply that $U\equiv V\equiv 0$ in $B_R^+$
giving rise to a contradiction.
\end{proof}

\begin{Lemma}\label{l:Dgeq}
  Letting $(U,V)\in H^1(B_R^+)\times H^1(B_R^+)$ be as in Lemma
  \ref{l:Hpos} and  $D,H$ as in \eqref{eq:D(r)}--\eqref{eq:H(r)},
  there holds
\begin{align}\label{eq:5}
&D(r)\geq r^{1-N} \left(\int_{B_{r}^+} \left(|\nabla U|^2+|\nabla
V|^2\right)dz\right)(1+O(r))-H(r)O(r),\\
\label{eq:6}
&D(r)\geq Nr^{-1-N} \left(\int_{B_{r}^+} (U^2+V^2)dz\right)(1+O(r))-H(r)O(1),
\end{align}
as $r\to0^+$.
\end{Lemma}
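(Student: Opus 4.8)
The plan is to split
\[
r^{N-1}D(r)=\int_{B_r^+}\big(|\nabla U|^2+|\nabla V|^2\big)\,dz+\int_{B_r^+}UV\,dz-\int_{B_r'}h\,u\,v\,dx
\]
and to bound the last two terms by the Dirichlet integral $G(r):=\int_{B_r^+}(|\nabla U|^2+|\nabla V|^2)\,dz$ and by $r^N H(r)=\int_{S_r^+}(U^2+V^2)\,dS$, with multipliers vanishing as $r\to0^+$. For the coupling term I would combine Young's inequality $\int_{B_r^+}UV\,dz\ge-\tfrac12\int_{B_r^+}(U^2+V^2)\,dz$ with the Poincar\'e inequality of Lemma~\ref{l:Poincare}, applied to $U$ and to $V$, to get
\[
\int_{B_r^+}(U^2+V^2)\,dz\le\frac{r^2}{N}\,G(r)+\frac{r}{N}\,r^N H(r).
\]
For the boundary term I first fix $r_0\in(0,R)$ so that $C_h:=\sup_{\overline{B_{r_0}'}}|h|<\infty$, which is possible since $h\in C^1(B_R')$; then, for $r<r_0$, pointwise $|h\,u\,v|\le\tfrac{C_h}{2}(u^2+v^2)$ on $B_r'$, and since $\tfrac1{|x|}\ge\tfrac1r$ on $B_r'$ one has $\int_{B_r'}(u^2+v^2)\,dx\le r\int_{B_r'}\frac{u^2+v^2}{|x|}\,dx$; applying the Hardy trace inequality of Lemma~\ref{l:hardyball} to $U$ and to $V$ then yields
\[
\Big|\int_{B_r'}h\,u\,v\,dx\Big|\le\frac{C_h\,r}{2\widetilde\Lambda}\,G(r)+\frac{C_h(N-1)}{4\widetilde\Lambda}\,r^N H(r).
\]

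Combining the two estimates gives, for $r<r_0$,
\[
r^{N-1}D(r)\ge G(r)\Big(1-\tfrac{r^2}{2N}-\tfrac{C_h\,r}{2\widetilde\Lambda}\Big)-r^N H(r)\Big(\tfrac{r}{2N}+\tfrac{C_h(N-1)}{4\widetilde\Lambda}\Big),
\]
and multiplying through by $r^{1-N}$ yields \eqref{eq:5}: the first multiplier is $1+O(r)$, while the coefficient of $r^N H(r)$ is $O(1)$, so after multiplication by $r^{1-N}$ it becomes $H(r)$ times $r\cdot O(1)=O(r)$.

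For \eqref{eq:6} I would note that the multiplier $1-\tfrac{r^2}{2N}-\tfrac{C_h\,r}{2\widetilde\Lambda}$ is strictly positive for $r$ small, so in the last display one may replace $G(r)$ by the lower bound that Lemma~\ref{l:Poincare} also provides, namely $G(r)\ge\frac{N}{r^2}\int_{B_r^+}(U^2+V^2)\,dz-r^{N-1}H(r)$ (again applying it to $U$ and to $V$). Substituting and multiplying by $r^{1-N}$ produces the term $Nr^{-1-N}\big(\int_{B_r^+}(U^2+V^2)\,dz\big)(1+O(r))$ together with terms of the form $-H(r)\,(1+O(r))=-H(r)\,O(1)$, which is precisely \eqref{eq:6}. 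The only point needing care --- and the closest thing here to an obstacle --- is the bookkeeping of signs: one must keep the multiplier of $G(r)$ strictly positive on the range of $r$ used, so that the Poincar\'e lower bound for $G(r)$ can be inserted without reversing the inequality, and one must check that all error coefficients are genuinely $O(r)$ or $O(1)$ as claimed; both are immediate once $r$ is taken below a threshold depending only on $N$, $C_h$ and $\widetilde\Lambda$.
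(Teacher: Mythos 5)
Your proposal is correct and follows essentially the same route as the paper: the coupling term is controlled via the Poincaré inequality of Lemma \ref{l:Poincare} (the paper's estimate \eqref{eq:4}), the boundary term via the Hardy trace inequality of Lemma \ref{l:hardyball} together with $|x|^{-1}\geq r^{-1}$ on $B_r'$ and boundedness of $h$ on $\overline{B_{r_0}'}$ (the paper's estimate \eqref{eq:3}), and \eqref{eq:6} is then obtained, exactly as in the paper, by reinserting the Poincaré lower bound for the Dirichlet integral into \eqref{eq:5} while keeping its multiplier positive. No gaps.
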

\begin{proof}
  From Lemma \ref{l:Poincare} we have that
\begin{equation}\label{eq:4}
\int_{B_r^+}(U^2+V^2)\,dz\leq \frac{r^{1+N}}{N}H(r)+\frac{r^2}{N}\int_{B_{r}^+} \left(|\nabla U|^2+|\nabla
V|^2\right)dz.
\end{equation}
From \eqref{eq:4} it follows that
\begin{equation}\label{eq:1}
\left|\int_{B_r^+}UV\,dz\right|\leq \frac{r^{1+N}}{2N}H(r)+\frac{r^2}{2N}\int_{B_{r}^+} \left(|\nabla U|^2+|\nabla
V|^2\right)dz
\end{equation}
whereas Lemma \ref{l:hardyball} implies that, for all $r\in(0,r_0)$,
\begin{align}\label{eq:3}
\left|\int_{B_r'}huv\,dx\right|&\leq \|h\|_{L^{\infty}(B_{r_0}')}\frac{r}2\int_{B_r'}
 \frac{u^2+v^2}{|x|} \,dx\\
\notag&\leq  \|h\|_{L^{\infty}(B_{r_0}')}\frac{r}{2\widetilde\Lambda}\left(\int_{B_{r}^+} \left(|\nabla U|^2+|\nabla
V|^2\right)dz\right)
+ \|h\|_{L^{\infty}(B_{r_0}')}\frac{N-1}{4\widetilde\Lambda}r^NH(r).
\end{align}
From \eqref{eq:1} and \eqref{eq:3} it follows that
\begin{align*}
  D(r)&\geq r^{1-N} \left(\int_{B_r^+}(|\nabla U|^2+|\nabla
V|^2)dz\right)\left(1-\frac{r^2}{2N}-
        \|h\|_{L^{\infty}(B_{r_0}')}\frac{r}{2\widetilde\Lambda}\right)\\
&\qquad -rH(r)\left(\frac{r}{2N}+\|h\|_{L^{\infty}(B_{r_0}')}\frac{N-1}{4\widetilde\Lambda}\right).
\end{align*}
The proof of \eqref{eq:5} is thereby complete. Estimate \eqref{eq:6}
follows by combination of \eqref{eq:5} and \eqref{eq:4}.
\end{proof}
\begin{remark}\label{rem:st}
  We observe that estimates \eqref{eq:5} and \eqref{eq:6} can be
  rewritten as
\begin{align}\label{eq:8}
&\int_{B_{r}^+} \left(|\nabla U|^2+|\nabla
V|^2\right)dz\leq D(r) r^{N-1}(1+O(r))+H(r)O(r^N),\\
\label{eq:9}
&\int_{B_{r}^+} (U^2+V^2)dz\leq \frac1N r^{N+1}D(r) (1+O(r))+H(r)O(r^{N+1}),
\end{align}
as $r\to0^+$.
\end{remark}

\begin{Lemma} \label{l:hprime}
We have that   $H\in
  W^{1,1}_{{\rm loc}}(0,R)$ and
\begin{align}\label{H'}
 & H'(r)=2r^{-N} \int_{S_r^+}\big(U\tfrac{\partial
    U}{\partial\nu}+V\tfrac{\partial
    V}{\partial\nu}\big)
  \, dS, \quad \text{in a distributional sense and for a.e. $r\in (0,R)$},\\
 &\label{H'2} H'(r)=\frac2r D(r), \quad \text{for every } r\in (0, R).
\end{align}
\end{Lemma}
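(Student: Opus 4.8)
The plan is to reduce everything to the unit half-sphere. Writing $z=r\theta$ with $\theta\in{\mathbb S}^{N}_+$ and using $dS=r^{N}\,dS(\theta)$ on $S_r^+$, the factor $r^{-N}$ in \eqref{eq:H(r)} is absorbed and
\[
H(r)=\int_{{\mathbb S}^{N}_+}\big(U^2(r\theta)+V^2(r\theta)\big)\,dS(\theta),
\]
so $H$ is expressed purely through the traces of $U,V$ on concentric spheres. Differentiating formally under the integral sign gives
\[
H'(r)=2\int_{{\mathbb S}^{N}_+}\big(U(r\theta)\,\nabla U(r\theta)\cdot\theta+V(r\theta)\,\nabla V(r\theta)\cdot\theta\big)\,dS(\theta),
\]
and since $\nabla U(r\theta)\cdot\theta=\tfrac{\partial U}{\partial\nu}$ on $S_r^+$, transforming back to $S_r^+$ yields \eqref{H'}. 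To make this rigorous one has to justify the differentiation under the integral and the membership $H\in W^{1,1}_{\rm loc}(0,R)$. Here I would use that, by the regularity argument in the proof of the Pohozaev identity, $U,V\in H^2(B_\rho^+)$ for every $\rho\in(0,R)$; consequently, in polar coordinates, for a.e.\ $\theta$ and all $0<r_1<r_2<\rho$ one has $U(r_2\theta)-U(r_1\theta)=\int_{r_1}^{r_2}\nabla U(r\theta)\cdot\theta\,dr$, with $|\nabla U(r\theta)\cdot\theta|\le|\nabla U(r\theta)|\in L^2$. Combined with a Cauchy--Schwarz estimate in the radial variable on each interval $(\delta,\rho)$, this shows that $r\mapsto\int_{{\mathbb S}^{N}_+}U^2(r\theta)\,dS(\theta)$ (and similarly for $V$) is absolutely continuous on $(\delta,\rho)$ with the stated derivative; letting $\delta\to0^+$ gives $H\in W^{1,1}_{\rm loc}(0,R)$ and \eqref{H'} for a.e.\ $r$ and in the distributional sense. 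The cleanest way to handle the interchange of limit and integral is to establish the identity first for smooth functions and then pass to the limit using the $H^2(B_\rho^+)$ bound and continuity of the trace operator on spheres.

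For \eqref{H'2} I would simply plug the Pohozaev identity \eqref{Pohozaev-1} into \eqref{H'}. Solving \eqref{Pohozaev-1} for the boundary integral gives, for a.e.\ $r\in(0,R)$,
\[
\int_{S_r^+}\Big(\tfrac{\partial U}{\partial\nu}U+\tfrac{\partial V}{\partial\nu}V\Big)\,dS
=\int_{B_r^+}\big(|\nabla U|^2+|\nabla V|^2+UV\big)\,dz-\int_{B_r'}h(x)u(x)v(x)\,dx
=r^{N-1}D(r),
\]
by the very definition \eqref{eq:D(r)} of $D$. Hence $H'(r)=2r^{-N}\cdot r^{N-1}D(r)=\tfrac2r D(r)$ for a.e.\ $r\in(0,R)$. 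Finally, since the integrals defining $D$ depend continuously (indeed absolutely continuously) on $r$ by dominated convergence, the function $r\mapsto\tfrac2r D(r)$ is continuous on $(0,R)$; together with $H\in W^{1,1}_{\rm loc}(0,R)$ this forces $H$ to be $C^1$ on $(0,R)$ with $H'(r)=\tfrac2r D(r)$ for \emph{every} $r\in(0,R)$, which is the second assertion.

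The main obstacle is the first step: establishing $H\in W^{1,1}_{\rm loc}(0,R)$ together with the precise formula \eqref{H'}, i.e.\ legitimizing the differentiation under the spherical integral. This is exactly where the $H^2$-regularity of $(U,V)$, obtained in the preceding lemma via even reflection across $\{t=0\}$ and elliptic regularity under Neumann conditions, is indispensable: $U$ and $V$ are only weak solutions, so the computation must be carried out by approximation rather than by a direct classical differentiation. Once \eqref{H'} is secured, the derivation of \eqref{H'2} from the Pohozaev identity is a one-line substitution.
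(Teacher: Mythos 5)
Your proof is correct, and since the paper's own ``proof'' is just a citation of \cite[Lemma 3.8]{FF}, your argument (passing to polar coordinates, justifying differentiation under the spherical integral via absolute continuity of $r\mapsto\int_{{\mathbb S}^N_+}(U^2+V^2)(r\theta)\,dS$, and then substituting the Pohozaev identity \eqref{Pohozaev-1} to get $H'=\frac2rD$, upgraded from a.e.\ to every $r$ by continuity of $D$) is exactly the standard route behind that citation. The only cosmetic remark is that the first step needs only $U,V\in H^1(B_R^+)$, so invoking the $H^2$ regularity from the Pohozaev lemma is more than necessary, though harmless and not circular.
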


\begin{proof} See the proof of \cite[Lemma 3.8]{FF}.
\end{proof}

\begin{Lemma}\label{l:dprime}
 The function $D$ defined in \eqref{eq:D(r)} belongs to $W^{1,1}_{{\rm\
      loc}}(0, R)$ and
\begin{align}\label{D'F}
  D'(r)=&\frac{2}{r^{N-1}} \int_{S_r^+}
          \left(\left|\frac{\partial U}{\partial \nu}\right|^2+
          \left|\frac{\partial V}{\partial \nu}\right|^2\right) dS
+\frac{1}{r^{N-1}} \int_{S_r^+}  UV\,dS\\
\notag&-\frac{2}{r^{N}}\int_{B_r^+}V\,\nabla U\cdot
  z\,dz-\frac{N-1}{r^{N}}\int_{B_r^+} UV\,dz\\
\notag&+\frac{N-1}{r^{N}}\int_{B_r'}huv-\frac{1}{r^{N-1}}\int_{\partial
  B_r'}huv\,dS'+2\frac1{r^{N}}\int_{B_r'}hu(x\cdot\nabla_x v)\,dx
\end{align}
in a distributional sense and for a.e. $r\in (0,R)$.
\end{Lemma}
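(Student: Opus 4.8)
The plan is to write $D(r)=r^{1-N}E(r)$, where
$$E(r):=\int_{B_r^+}\bigl(|\nabla U|^2+|\nabla V|^2+UV\bigr)\,dz-\int_{B_r'}h(x)u(x)v(x)\,dx,$$
so that $D'(r)=(1-N)r^{-N}E(r)+r^{1-N}E'(r)$. The first task is regularity. Since $U,V\in H^1(B_R^+)$ the functions $|\nabla U|^2,|\nabla V|^2,UV$ lie in $L^1(B_R^+)$, while $h\in C^1(B_R')$ and $u,v\in L^2_{\rm loc}(B_R')$ (indeed $u,v\in H^{1/2}$, being traces of the $H^2_{\rm loc}$ functions $U,V$) give $huv\in L^1_{\rm loc}(B_R')$. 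The coarea formula in the radial direction then shows that $r\mapsto\int_{B_r^+}(\,\cdot\,)\,dz$ and $r\mapsto\int_{B_r'}huv\,dx$ are locally absolutely continuous on $(0,R)$, with
$$E'(r)=\int_{S_r^+}\bigl(|\nabla U|^2+|\nabla V|^2+UV\bigr)\,dS-\int_{\partial B_r'}h(x)u(x)v(x)\,dS'\qquad\text{for a.e. }r\in(0,R);$$
hence $E\in W^{1,1}_{\rm loc}(0,R)$, and since $r^{1-N}$ is smooth and bounded together with its derivative on compact subsets of $(0,R)$, also $D\in W^{1,1}_{\rm loc}(0,R)$.

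Next I would remove the unwanted surface integral $\int_{S_r^+}(|\nabla U|^2+|\nabla V|^2)\,dS$ appearing in $E'$ by means of the Pohozaev identity \eqref{Pohozaev-2}, which is legitimate because $U,V\in H^2(B_r^+)$ for every $r\in(0,R)$, as established in the proof of the previous lemma. Solving \eqref{Pohozaev-2} for that integral yields, for a.e. $r\in(0,R)$,
$$\int_{S_r^+}\bigl(|\nabla U|^2+|\nabla V|^2\bigr)\,dS=\frac{N-1}{r}\int_{B_r^+}\bigl(|\nabla U|^2+|\nabla V|^2\bigr)\,dz-\frac{2}{r}\int_{B_r^+}V(z\cdot\nabla U)\,dz+\frac{2}{r}\int_{B_r'}hu(x\cdot\nabla_x v)\,dx+2\int_{S_r^+}\left(\Bigl|\tfrac{\partial U}{\partial\nu}\Bigr|^2+\Bigl|\tfrac{\partial V}{\partial\nu}\Bigr|^2\right)dS.$$

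Finally I would substitute this expression into $D'(r)=(1-N)r^{-N}E(r)+r^{1-N}E'(r)$ and collect terms. The crucial point is that $r^{1-N}\cdot\frac{N-1}{r}\int_{B_r^+}(|\nabla U|^2+|\nabla V|^2)\,dz=(N-1)r^{-N}\int_{B_r^+}(|\nabla U|^2+|\nabla V|^2)\,dz$ exactly cancels the term $(1-N)r^{-N}\int_{B_r^+}(|\nabla U|^2+|\nabla V|^2)\,dz$ coming from $(1-N)r^{-N}E(r)$; the remaining contributions (those involving $\int_{B_r^+}UV$, $\int_{B_r'}huv$, $\int_{B_r^+}V(z\cdot\nabla U)$, $\int_{B_r'}hu(x\cdot\nabla_x v)$, $\int_{S_r^+}UV$, $\int_{S_r^+}(|\tfrac{\partial U}{\partial\nu}|^2+|\tfrac{\partial V}{\partial\nu}|^2)$ and $\int_{\partial B_r'}huv$) assemble, with the correct powers of $r$ and signs ($1-N=-(N-1)$), into precisely the right-hand side of \eqref{D'F}. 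The main obstacle is essentially bookkeeping rather than conceptual: one must carefully justify that differentiating $\int_{B_r^+}(\,\cdot\,)\,dz$ picks up only the spherical cap $S_r^+$ and not the flat part $B_r'$, that differentiating $\int_{B_r'}huv\,dx$ produces the ``boundary of the boundary'' integral over $\partial B_r'$, and then track all signs and powers of $r$ through the substitution; no ingredient beyond \eqref{Pohozaev-2} and the absolute continuity of radial integrals is needed.
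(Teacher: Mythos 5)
Your proposal is correct and follows essentially the same route as the paper: both differentiate the auxiliary function $I(r)=\int_{B_r^+}(|\nabla U|^2+|\nabla V|^2+UV)\,dz-\int_{B_r'}huv\,dx$ (your $E$) to obtain the surface and $\partial B_r'$ terms, and then substitute the Pohozaev identity \eqref{Pohozaev-2} into $D'(r)=r^{-N}[-(N-1)I(r)+rI'(r)]$ to arrive at \eqref{D'F}. The only cosmetic difference is that the paper invokes Lemma \ref{l:hardyball} to justify the integrability of $huv$ and the membership $I\in W^{1,1}$, while you use $u,v\in L^2_{\rm loc}$ via the trace theorem; both are adequate.
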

\begin{proof}
  For any $r\in (0,R)$ let
\begin{align}\label{I(r)}
I(r)= \int_{B_r^+} \left(|\nabla U|^2+|\nabla V|^2+UV\right)dz-
\int_{B_r'} h(x)u(x)v(x)\, dx .
\end{align}
From
the fact that $U,V\in  H^1(B_R^+)$ and Lemma \ref{l:hardyball} it follows that $I\in
 W^{1,1}(0,R)$ and
\begin{equation} \label{I'(r)}
I'(r) =
\int_{S_r^+} \left(|\nabla U|^2+|\nabla V|^2+UV\right)\,dS-
\int_{\partial B_r'} h(x)u(x)v(x)\, dS'
\end{equation}
 for a.e. $r\in (0,R)$ and in the distributional sense.
Therefore $D\in W^{1,1}_{{\rm loc}}(0,R)$ and, replacing
\eqref{Pohozaev-2}, (\ref{I(r)}), and (\ref{I'(r)}) into
$D'(r)=r^{-N}[-(N-1)I(r)+rI'(r)]$,
we obtain (\ref{D'F}).
\end{proof}

In view of Lemma \ref{l:Hpos}, the function
\begin{equation}\label{eq:7}
\mathcal N:(0,r_0)\to\R,\quad
\mathcal N(r)=\frac{D(r)}{H(r)}
\end{equation}
is well defined.
As a consequence of estimate \eqref{eq:5} we obtain the following
corollary.
\begin{Corollary}\label{c:limpos}
    Let $(U,V)\in H^1(B_R^+)\times H^1(B_R^+)$ be as in Lemma
  \ref{l:Hpos}  and  let $D,H,\mathcal N$ be defined  in
  \eqref{eq:D(r)}, \eqref{eq:H(r)}, and \eqref{eq:7}
  respectively. For every $\e>0$ there exists $r_\e>0$ such that
\[
\mathcal N(r)+\e\geq 0\quad\text{for all }0<r<r_\e,
\]
i.e.
\begin{equation}\label{eq:liminf}
\liminf_{r\to0^+}\mathcal N(r)\geq0.
\end{equation}
\end{Corollary}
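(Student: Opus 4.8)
The plan is to derive the lower bound on $\mathcal N$ directly from estimate \eqref{eq:5} of Lemma \ref{l:Dgeq}, using the positivity of $H$ near the origin (Lemma \ref{l:Hpos}). First I would fix $r_0\in(0,R)$ as in Lemma \ref{l:Hpos}, so that $H(r)>0$ for all $r\in(0,r_0)$ and hence $\mathcal N(r)=D(r)/H(r)$ is well defined there. Dividing \eqref{eq:5} by $H(r)>0$ gives
\[
\mathcal N(r)\geq r^{1-N}\,\frac{\int_{B_r^+}(|\nabla U|^2+|\nabla V|^2)\,dz}{H(r)}\,(1+O(r))-O(r),
\]
and since the first term on the right-hand side is nonnegative for $r$ small enough (once $1+O(r)>0$, which holds for $r$ below some threshold), we obtain $\mathcal N(r)\geq -O(r)$ as $r\to0^+$.

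From here the conclusion is immediate: given $\e>0$, the estimate $\mathcal N(r)\geq -Cr$ (for some constant $C>0$ and all $r$ small) shows that $\mathcal N(r)+\e\geq \e-Cr\geq 0$ whenever $r\leq \e/C$, so one may take $r_\e=\min\{\e/C,\,r_0,\,r_1\}$ where $r_1$ is the threshold below which $1+O(r)>0$. This proves $\mathcal N(r)+\e\geq 0$ for all $0<r<r_\e$, and letting $\e\to0^+$ yields \eqref{eq:liminf}.

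There is essentially no obstacle here, since the work has already been done in Lemma \ref{l:Dgeq}; the only point requiring a small amount of care is to be explicit that the $O(r)$ and $O(1)$ terms in \eqref{eq:5} come with constants depending only on $N$, $\|h\|_{L^\infty(B_{r_0}')}$ and $\widetilde\Lambda$ (as is visible from the proof of Lemma \ref{l:Dgeq}), so that the threshold $r_\e$ can indeed be chosen uniformly in terms of $\e$. One should also note that dropping the nonnegative term $r^{1-N}(1+O(r))\int_{B_r^+}(|\nabla U|^2+|\nabla V|^2)\,dz$ is legitimate precisely because both $r^{1-N}$ and, for $r$ small, $1+O(r)$ are positive; this is the only place where the blow-up solution's regularity plays no role and the argument is purely elementary.
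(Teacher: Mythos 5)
Your proof is correct and follows exactly the route the paper intends: the corollary is stated there as an immediate consequence of estimate \eqref{eq:5}, obtained by dividing by $H(r)>0$ (Lemma \ref{l:Hpos}), discarding the nonnegative gradient term once $1+O(r)>0$, and absorbing the remaining $O(r)$ term by choosing $r_\e$ small, just as you do.
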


 \begin{Lemma}\label{mono} The function
   ${\mathcal N}$ defined in \eqref{eq:7} belongs to $W^{1,1}_{{\rm
       loc}}(0, r_0)$ and
\begin{align}\label{formulona}
{\mathcal N}'(r)=\nu_1(r)+\nu_2(r)
\end{align}
in a distributional sense and for a.e. $r\in (0,r_0)$,
where
\begin{align*}
\nu_1(r)=\frac{2r\Big[
    \left(\int_{S_r^+}
  \left(\left|\frac{\partial U}{\partial \nu}\right|^2+
\left|\frac{\partial V}{\partial \nu}\right|^2\right) dS\right) \cdot
    \left(\int_{S_r^+}
  (U^2+V^2)\,dS\right)-\left(
\int_{S_r^+}
  \left(U\frac{\partial U}{\partial \nu}+V\frac{\partial V}{\partial \nu}\right)\, dS\right)^{\!2} \Big]}
{\left(
\int_{S_r^+}
  (U^2+V^2)\,dS\right)^2}
\end{align*}
and
\begin{align}\label{eq:16}
  \nu_2(r)= &\, \frac{r\int_{S_r^+}  UV\,dS-2\int_{B_r^+} V\,\nabla
              U\cdot z\,dz-(N-1)\int_{B_r^+} UV\,dz}{\int_{S_r^+}
              (U^2+V^2)\,dS}\\
\notag&\quad+ \frac{(N-1)\int_{ B_r'}huv\, dx-r\int_{\partial B_r'}huv\,dS'+2\int_{B_r'}hu\,x\cdot
  \nabla_x v\,dx} {\int_{S_r^+}
    (U^2+V^2)\,dS}.
\end{align}
\end{Lemma}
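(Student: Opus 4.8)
The plan is to compute $\mathcal N'(r)$ directly from the quotient rule using the expressions already available for $D'(r)$ (Lemma \ref{l:dprime}), $H'(r)$ (Lemma \ref{l:hprime}), and the identity $H'(r)=\frac2r D(r)$. Since $\mathcal N=D/H$ with $D,H\in W^{1,1}_{\mathrm{loc}}(0,r_0)$ and $H>0$ on $(0,r_0)$ by Lemma \ref{l:Hpos}, the quotient $\mathcal N$ lies in $W^{1,1}_{\mathrm{loc}}(0,r_0)$ and $\mathcal N'=\frac{D'H-DH'}{H^2}$ holds a.e.\ and distributionally. Using $D=\frac r2 H'$ and $H'(r)=2r^{-N}\int_{S_r^+}(U\frac{\partial U}{\partial\nu}+V\frac{\partial V}{\partial\nu})\,dS$ from \eqref{H'}, one rewrites $DH'=\frac r2 (H')^2$, so that the numerator becomes $D'H-\frac r2(H')^2$.

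Next I would substitute the full expression \eqref{D'F} for $D'(r)$ and the expression \eqref{eq:H(r)} for $H(r)=r^{-N}\int_{S_r^+}(U^2+V^2)\,dS$. Multiplying out $D'(r)H(r)$, the leading term $\frac{2}{r^{N-1}}\int_{S_r^+}(|\frac{\partial U}{\partial\nu}|^2+|\frac{\partial V}{\partial\nu}|^2)\,dS\cdot r^{-N}\int_{S_r^+}(U^2+V^2)\,dS$ combines with $-\frac r2(H')^2=-\frac r2\cdot 4r^{-2N}(\int_{S_r^+}(U\frac{\partial U}{\partial\nu}+V\frac{\partial V}{\partial\nu})\,dS)^2$; after dividing by $H^2=r^{-2N}(\int_{S_r^+}(U^2+V^2)\,dS)^2$ and extracting the common factor, these two pieces assemble into $\nu_1(r)$, which is the Cauchy--Schwarz deficit term. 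The remaining terms of $D'(r)$ — namely $\frac{1}{r^{N-1}}\int_{S_r^+}UV\,dS$, $-\frac{2}{r^N}\int_{B_r^+}V\,\nabla U\cdot z\,dz$, $-\frac{N-1}{r^N}\int_{B_r^+}UV\,dz$, $\frac{N-1}{r^N}\int_{B_r'}huv$, $-\frac{1}{r^{N-1}}\int_{\partial B_r'}huv\,dS'$, and $2r^{-N}\int_{B_r'}hu(x\cdot\nabla_x v)\,dx$ — when multiplied by $H(r)$ and divided by $H(r)^2$, each produce a factor $1/\int_{S_r^+}(U^2+V^2)\,dS$ and the appropriate powers of $r$ cancel, yielding exactly the two fractions displayed in \eqref{eq:16}, i.e.\ $\nu_2(r)$. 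Collecting everything gives $\mathcal N'=\nu_1+\nu_2$.

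The one point requiring a little care is the regularity claim and the bookkeeping of the powers of $r$: one must check that $D'H$ and $DH'$ are genuinely in $L^1_{\mathrm{loc}}(0,r_0)$ so that the distributional derivative of the quotient is the pointwise one. This follows since $H$ is bounded away from $0$ on compact subsets of $(0,r_0)$ and $D'\in L^1_{\mathrm{loc}}$ by Lemma \ref{l:dprime}; the boundary integral $\int_{\partial B_r'}huv\,dS'$ is locally integrable in $r$ because $u,v\in H^{1/2}(B_r')\hookrightarrow L^{2}(\partial B_r')$ for a.e.\ $r$ with the relevant estimate uniform on compact $r$-intervals, exactly as in the proof of Lemma \ref{l:dprime}. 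I do not anticipate a genuine obstacle here: the statement is a bookkeeping consequence of the three preceding lemmas, and the main care is simply to verify that the powers $r^{-N}$, $r^{1-N}$ appearing in $D'$, $H$, $H'$ recombine correctly so that $\nu_1$ and $\nu_2$ come out with precisely the normalizations stated. The substantive difficulty — controlling the sign-indefinite term $-r\int_{\partial B_r'}huv\,dS'+2\int_{B_r'}hu\,x\cdot\nabla_x v\,dx$ hidden inside $\nu_2$ — is deferred to the later estimates (Lemma \ref{l:STIMA_DI_nu_2}) and is not part of proving this identity.
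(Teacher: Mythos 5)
Your proposal is correct and is exactly the argument the paper intends: its proof of Lemma \ref{mono} is the one-line remark that the identity follows from the definition of $\mathcal N$ together with Lemmas \ref{l:hprime} and \ref{l:dprime}, i.e.\ precisely the quotient-rule computation (using $H>0$ from Lemma \ref{l:Hpos}, $H'=\frac2r D$, and \eqref{D'F}) that you carry out and whose $r$-power bookkeeping you verify.
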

\begin{proof}
  It follows directly from the definition of $\mathcal N$ and Lemmas
  \ref{l:hprime} and \ref{l:dprime}.
\end{proof}

We now estimate the term $\nu_2$ in \eqref{eq:16}. This is the most
delicate point in the development of the monotonicity argument for
system \eqref{eq:system}, due to the presence of  the
integral over ``the boundary of the boundary''
$\int_{\partial B_r'}huv\,dS'$ in the term $\nu_2$.

\begin{Lemma}\label{l:STIMA_DI_nu_2}
Let $\nu_2$ be as in \eqref{eq:16}. Then
\[
\nu_2(r)=O\left(1+\mathcal
  N(r)+r\sqrt{\frac{B(r)}{r^NH(r)}}\right)\quad\text{as }r\to 0^+,
\]
where
\begin{equation}\label{eq:B(r)}
B(r)=\int_{S_r^+}(|\nabla U|^2+|\nabla V|^2)\,dS.
\end{equation}
\end{Lemma}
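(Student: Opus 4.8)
The goal is to bound each of the six terms appearing in the numerator of $\nu_2(r)$ in \eqref{eq:16}, dividing by $r^N H(r)$. The three ``interior'' terms $r\int_{S_r^+}UV\,dS$, $2\int_{B_r^+}V\,\nabla U\cdot z\,dz$ and $(N-1)\int_{B_r^+}UV\,dz$ are handled by Cauchy--Schwarz together with the a priori bounds already established. Indeed $\bigl|\int_{S_r^+}UV\,dS\bigr|\le\frac12\int_{S_r^+}(U^2+V^2)\,dS=\frac12 r^N H(r)$, so this term contributes $O(r)$. For $\int_{B_r^+}V\,\nabla U\cdot z\,dz$ one writes $|z|\le r$ and applies Cauchy--Schwarz to get $\le r\bigl(\int_{B_r^+}V^2\bigr)^{1/2}\bigl(\int_{B_r^+}|\nabla U|^2\bigr)^{1/2}$, then uses \eqref{eq:8}--\eqref{eq:9} to bound the two factors by $C r^{(N+1)/2}(D(r)+\text{l.o.t.})^{1/2}$ and $C r^{(N-1)/2}(D(r)+\text{l.o.t.})^{1/2}$ respectively, giving after division by $r^N H(r)$ a contribution $O(1+\mathcal N(r))$; the term $(N-1)\int_{B_r^+}UV\,dz$ is similar (and easier) via \eqref{eq:9}. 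The term $(N-1)\int_{B_r'}huv\,dx$ is controlled by $\|h\|_{L^\infty}\cdot\frac12\int_{B_r'}(u^2+v^2)\,dx$, and by the trace Hardy inequality of Lemma \ref{l:hardyball} (or directly by Lemma \ref{l:sobolev} plus Hölder on $B_r'$, which has measure $O(r^N)$) this is $\le C r\bigl(\int_{B_r^+}(|\nabla U|^2+|\nabla V|^2)\,dz + \tfrac1r\int_{S_r^+}(U^2+V^2)\,dS\bigr)$, hence $O(1+\mathcal N(r))$ after using \eqref{eq:8} and dividing by $r^N H(r)$.

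**The genuinely new terms.** The two remaining terms, $r\int_{\partial B_r'}huv\,dS'$ and $2\int_{B_r'}hu\,x\cdot\nabla_x v\,dx$, are the crux. For the gradient term I would use Cauchy--Schwarz: $\bigl|\int_{B_r'}hu\,x\cdot\nabla_x v\,dx\bigr|\le\|h\|_\infty r\bigl(\int_{B_r'}u^2\,dx\bigr)^{1/2}\bigl(\int_{B_r'}|\nabla_x v|^2\,dx\bigr)^{1/2}$. The factor $\int_{B_r'}u^2\,dx$ is again estimated by the trace inequality as $O\bigl(r\cdot(\int_{B_r^+}|\nabla U|^2 + \tfrac1r\int_{S_r^+}U^2)\bigr)$. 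The factor $\int_{B_r'}|\nabla_x v|^2\,dx$, however, is a \emph{full tangential $H^1$-norm of the trace of} $V$, which is not controlled by the $H^1(B_r^+)$-norm of $V$ alone — this is exactly the difficulty the introduction flags. Here I would invoke an estimate of $\int_{\partial B_r'}$-type boundary integrals: since $B_r'=S_r'$ is ``the boundary of the boundary'' relative to $B_r^+$ only through $S_r^+$, the natural move is to bound the trace of $\nabla V$ on $B_r'$ and the boundary-of-boundary integral $\int_{\partial B_r'}huv\,dS'$ \emph{together} in terms of $B(r)=\int_{S_r^+}(|\nabla U|^2+|\nabla V|^2)\,dS$ defined in \eqref{eq:B(r)}, plus the lower-order quantities $H(r)$ and $D(r)$. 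Concretely I expect to establish a trace-type inequality on the half-ball of the form
\[
\int_{\partial B_r'}(U^2+V^2)\,dS' \;\le\; C\Bigl(r\,B(r) + \tfrac1r\int_{S_r^+}(U^2+V^2)\,dS\Bigr),
\]
and a companion bound for $\int_{B_r'}|\nabla_x v|^2\,dx$, applying the $(N-1)$-dimensional analogues of Lemmas \ref{l:sobolev}--\ref{l:hardyball} on the hemisphere $S_r^+$ with its boundary $\partial B_r'=S_r'$, together with elliptic estimates on $S_r^+$ for the (restricted) harmonic function $V$. Feeding these in, $r\int_{\partial B_r'}huv\,dS'$ divided by $r^N H(r)$ becomes $O\bigl(1 + r\sqrt{B(r)/(r^N H(r))}\bigr)$, and the gradient term likewise produces the same $r\sqrt{B(r)/(r^N H(r))}$ contribution (the square root arising because one factor is an $L^2$-norm involving $B(r)$ while the other, through $H(r)$, sits in the denominator of $\mathcal N$).

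**Assembling and the main obstacle.** Summing all six contributions and recalling $\mathcal N(r)=D(r)/H(r)$, everything collapses to the claimed $\nu_2(r)=O\bigl(1+\mathcal N(r)+r\sqrt{B(r)/(r^N H(r))}\bigr)$. The main obstacle, as anticipated, is controlling the tangential derivative $\int_{B_r'}|\nabla_x v|^2$ and the boundary-of-boundary term $\int_{\partial B_r'}huv\,dS'$, since neither is dominated by the interior energy of $(U,V)$; the resolution is to absorb them into the new quantity $B(r)$ (the gradient energy on the hemisphere $S_r^+$), at the price of the square-root term, which will later be reabsorbed in the full monotonicity estimate. A secondary technical point is the regularity needed to make $\int_{B_r'}|\nabla_x v|^2$ meaningful for a.e.\ $r$; this is supplied by the $H^2(B_r^+)$-regularity of $U,V$ already proved in the Pohozaev lemma, which gives $v=\mathrm{Tr}\,V\in H^1(B_r')$ and $\nabla_x v\in L^2(B_r')$ for a.e.\ $r\in(0,R)$. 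With these pieces in place the estimate follows by collecting the bounds; I would not expect any further subtlety beyond keeping track of the powers of $r$.
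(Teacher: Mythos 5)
Your treatment of the four ``easy'' terms (the $S_r^+$ and $B_r^+$ integrals and $\int_{B_r'}huv\,dx$) coincides with the paper's. The gap is in the two coupled terms, where you miss the two identities on which the paper's proof actually rests. First, the paper never estimates $\int_{\partial B_r'}huv\,dS'$ at all: it exploits the specific combination $-r\int_{\partial B_r'}huv\,dS'+2\int_{B_r'}hu\,x\cdot\nabla_x v\,dx$ appearing in \eqref{eq:16} and integrates by parts tangentially in $B_r'$ (this is where $h\in C^1$ enters), obtaining identity \eqref{eq:14}: the boundary-of-boundary integral cancels and one is left with the antisymmetric expression $\int_{B_r'}hu\,x\cdot\nabla_x v\,dx-\int_{B_r'}hv\,x\cdot\nabla_x u\,dx$ plus a zeroth-order term $\int_{B_r'}uv(Nh+x\cdot\nabla h)\,dx$ handled by Lemma \ref{l:hardyball}. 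Second, each of the remaining flat-boundary integrals is rewritten via the divergence theorem in the vertical direction, identity \eqref{eq:18}, as integrals over $S_r^+$ (products of $U$ with first derivatives of $V$, and vice versa) plus interior integrals over $B_r^+$; Cauchy--Schwarz on those \emph{mixed} products on $S_r^+$ is exactly what produces the term $r\sqrt{r^NH(r)B(r)}$ in \eqref{eq:*}, i.e.\ the contribution linear in $r\sqrt{B(r)/(r^NH(r))}$ after division by $r^NH(r)$ and use of \eqref{eq:8}. No tangential gradient $\int_{B_r'}|\nabla_x v|^2\,dx$ ever appears.

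Your substitute steps do not reach the stated bound. The proposed control of $\int_{B_r'}|\nabla_x v|^2\,dx$ ``by $(N-1)$-dimensional analogues of the trace lemmas on $S_r^+$ together with elliptic estimates for the restricted harmonic function $V$'' is not a valid argument: that quantity is the full tangential $H^1$-norm of the trace of $V$ on the flat disk, it is not dominated by $B(r)$, $D(r)$, $H(r)$ (this is precisely the obstruction the introduction describes), and the qualitative $H^2(B_r^+)$ regularity only makes it finite, not quantitatively admissible in the frequency estimate with constants uniform in $r$. Moreover, even granting your hemisphere-to-equator trace inequality $\int_{\partial B_r'}(u^2+v^2)\,dS'\leq C\big(rB(r)+\tfrac1r\int_{S_r^+}(U^2+V^2)\,dS\big)$ (which is plausible by scaling), inserting it into $r\int_{\partial B_r'}|huv|\,dS'$ gives $O\big(r^2B(r)+r^NH(r)\big)$, hence after division by $r^NH(r)$ a term $r^2B(r)/(r^NH(r))=\big(r\sqrt{B(r)/(r^NH(r))}\big)^2$; this is quadratic, not the claimed linear term, and you cannot recover the square root by Cauchy--Schwarz since both trace factors carry the $B(r)$ part. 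The difference matters downstream: combined with Lemma \ref{l:stima_B}, a quadratic term would only give $\mathcal N'\geq -C(1+\mathcal N+\mathcal N^2)$ on the bad set, which no longer yields the boundedness of $\mathcal N$ in Lemma \ref{l:limitN}. So the missing ideas are precisely the cancellation identity \eqref{eq:14} and the vertical divergence-theorem identity \eqref{eq:18}.
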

\begin{proof}
We observe that
\begin{equation}\label{eq:10}
\frac{r\left|\int_{S_r^+}UV\,dS\right|}{\int_{S_r^+}(U^2+V^2)\,dS}=O(r)\quad\text{as
}r\to0^+.
\end{equation}
From \eqref{eq:8} and \eqref{eq:9} we have that
\begin{align}\label{eq:11}
\frac{\left|\int_{B_r^+} V\,\nabla
              U\cdot z\,dz\right|}{\int_{S_r^+}(U^2+V^2)\,dS}&\leq \frac{1}{2r^NH(r)}\left(\int_{B_r^+} V^2\,dz+
r^2\int_{B_r^+} |\nabla U|^2\,dz\right)\\
&\notag\leq
\frac{N+1}{2N}\mathcal N(r)r(1+O(r))+O(r)
\leq \mathcal N(r)r(1+O(r))+O(r)
\end{align}
and
\begin{equation}\label{eq:12}
\frac{\left|\int_{B_r^+} UV\,dz\right|}{\int_{S_r^+}
              (U^2+V^2)\,dS}\leq \frac{r}{2N}\mathcal N(r)(1+O(r)) +O(r)
          \end{equation}
as $r\to0^+$. From \eqref{eq:3} and \eqref{eq:8} we have that
\begin{equation}\label{eq:13}
  \frac{\left|\int_{ B_r'}huv\, dx\right|}
  {\int_{S_r^+}(U^2+V^2)\,dS}\leq
  \frac{\|h\|_{L^{\infty}(B_{r_0}')}}{2\widetilde \Lambda}\mathcal N(r)(1+O(r))+O(1)=\mathcal N(r)O(1)+O(1)
\end{equation}
as $r\to0^+$.

Integration by parts yields
\begin{equation*}
  \int_{ B_r'}hu\,x\cdot\nabla_xv\, dx=r \int_{\partial  B_r'}huv\, dS'
-\int_{ B_r'}v(Nhu+u\nabla h\cdot x+hx\cdot\nabla_xu)\, dx
\end{equation*}
so that
\begin{multline}\label{eq:14}
-r\int_{\partial B_r'}huv\,dS'+2\int_{B_r'}hu\,x\cdot
  \nabla_x v\,dx\\
=\int_{B_r'}hu\,x\cdot
  \nabla_x v\,dx-\int_{B_r'}hv\,x\cdot
  \nabla_x u\,dx-\int_{B_r'}uv(Nh+x\cdot
  \nabla h)\,dx.
\end{multline}
 From Lemma \ref{l:hardyball} and \eqref{eq:8} we have that
\begin{align}\label{eq:15}
  \frac{\left|\int_{B_r'}uv(Nh+x\cdot
  \nabla h)\,dx \right|}
  {\int_{S_r^+}(U^2+V^2)\,dS}&\leq
  \frac{\|Nh+x\cdot
  \nabla_x h\|_{L^{\infty}(B_{r_0}')}}{2\widetilde \Lambda}\mathcal
                               N(r)(1+O(r))+O(1)\\
&\notag=\mathcal N(r)O(1)+O(1)
\end{align}
as $r\to0^+$.

On the other hand, by the Divergence Theorem  we have that
\begin{align}\label{eq:18}
\int_{B_r'}hu\,x&\cdot
  \nabla_x v\,dx= -\int_{B_r'}hu\,(x\cdot
  \nabla_x v){\mathbf e}_{N+1}\cdot \nu\,dx \\
\notag&=\int_{S_r^+}h(x)U(x,t)\,(z\cdot
  \nabla V){\mathbf e}_{N+1}\cdot \nu\,dS-
\int_{B_r^+}\frac{\partial}{\partial t}\left[ h(x)U(x,t)\,(z\cdot
  \nabla V)\right]\,dz \\
\notag&=\int_{S_r^+}h(x)U(x,t)\,(z\cdot
  \nabla V){\mathbf e}_{N+1}\cdot \nu\,dS-
\int_{B_r^+}h(x)U_t\,(z\cdot
  \nabla V)\,dz\\
\notag&\qquad-
\int_{B_r^+}h(x)U\,(V_t+z\cdot
  \nabla V_t)\,dz\\
\notag&=\int_{S_r^+}h(x)U(x,t)\,(z\cdot
  \nabla V){\mathbf e}_{N+1}\cdot \nu\,dS-
\int_{B_r^+}h(x)U_t\,(z\cdot
  \nabla V)\,dz\\
\notag&\qquad-
r \int_{S_r^+}h(x)UV_t\,dS
+
\int_{B_r^+}(Nh(x)+\nabla h\cdot x )UV_t\,dz
+
\int_{B_r^+}
hV_t(\nabla U\cdot z)\,dz.
\end{align}
Hence, taking into account Lemma \ref{l:Poincare},
\begin{align} \label{eq:*}
  \left|\int_{B_r'}\!\!hu\,x\cdot
  \nabla_x v\,dx\right|\!\leq\!{\rm const\,} \!\!\left(
r\sqrt{r^N H(r)B(r)}
+r\!\!\int_{B_r^+}(|\nabla U|^2+|\nabla V|^2)\,dz\!+\!
\int_{S_r^+}(U^2+V^2)\,dS
\right)
\end{align}
for some ${\rm const\,>}0$ independent of $r$.
In a similar way we obtain that
\begin{align*}
  \left|\int_{B_r'}hv\,x\cdot
  \nabla_x u\,dx\right|\leq{\rm const\,} \left(
r\sqrt{r^N H(r)B(r)}
+r\int_{B_r^+}(|\nabla U|^2+|\nabla V|^2)\,dz+
\int_{S_r^+}(U^2+V^2)\,dS
\right).
\end{align*}
As a consequence, in view of \eqref{eq:8} we conclude that
\begin{equation}\label{eq:17}
\frac{\left|-r\int_{\partial B_r'}huv\,dS'+2\int_{B_r'}hu\,x\cdot
  \nabla_x v\,dx\right|} {\int_{S_r^+}
    (U^2+V^2)\,dS}\leq \mathcal N(r)O(1)+\sqrt{\frac{B(r)}{r^NH(r)}}\,O(r)+O(1)
\end{equation}
as $r\to0^+$.

Inserting \eqref{eq:10}-\eqref{eq:17} into \eqref{eq:16} the proof of the lemma follows.
\end{proof}

Inspired by
\cite[Lemma 5.9]{FF-edinb}, in the following lemma we estimate  $B$  in terms of the
derivative $D'$.

\begin{Lemma}\label{l:stima_B}
Let $B$ be defined in \eqref{eq:B(r)}. Then there exist $C_1,C_2,\bar
r>0$ such that
\[
B(r)\leq 2 r^{N-1}D'(r)+C_1 r^{N-2} (D(r)+C_2 H(r)) \quad\text{and}\quad
D(r)+C_2H(r)\geq 0\quad\text{for all }r\in
(0,\bar r).
\]
\end{Lemma}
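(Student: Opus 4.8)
The plan is to compare $B(r)=\int_{S_r^+}(|\nabla U|^2+|\nabla V|^2)\,dS$ with the right-hand side of the expression for $D'(r)$ obtained in Lemma \ref{l:dprime}. Solving \eqref{D'F} for the boundary Dirichlet integral gives
\[
\int_{S_r^+}\left(\left|\tfrac{\partial U}{\partial\nu}\right|^2+\left|\tfrac{\partial V}{\partial\nu}\right|^2\right)dS
=\frac{r^{N-1}}{2}D'(r)-\frac12\int_{S_r^+}UV\,dS+\frac{1}{r}\int_{B_r^+}V\,\nabla U\cdot z\,dz+\frac{N-1}{2r}\int_{B_r^+}UV\,dz-\frac{N-1}{2r}\int_{B_r'}huv\,dx+\frac12\int_{\partial B_r'}huv\,dS'-\frac1r\int_{B_r'}hu\,(x\cdot\nabla_x v)\,dx.
\]
Since $B(r)$ decomposes as the normal part plus the tangential part on $S_r^+$, and the normal part is exactly $\int_{S_r^+}(|\partial U/\partial\nu|^2+|\partial V/\partial\nu|^2)\,dS$ above, it suffices to control the tangential gradient on $S_r^+$ and each of the error terms on the right by $C\,r^{-1}(D(r)+C_2H(r))$, up to the leading $r^{N-1}D'(r)$ term. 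For the tangential part I would invoke the (Pohozaev) identity \eqref{Pohozaev-2}, which relates $\int_{S_r^+}(|\nabla U|^2+|\nabla V|^2)\,dS=\frac{2}{r}B(r)$ (note $B$ is $r^{N-1}$ times this, so care with powers) to interior integrals: \eqref{Pohozaev-2} gives $\frac r2\int_{S_r^+}(|\nabla U|^2+|\nabla V|^2)\,dS$ in terms of $\frac{N-1}{2}\int_{B_r^+}(|\nabla U|^2+|\nabla V|^2)\,dz$, $\int_{B_r^+}V(z\cdot\nabla U)\,dz$, $\int_{B_r'}hu(x\cdot\nabla_x v)\,dx$ and $r\int_{S_r^+}(|\partial U/\partial\nu|^2+|\partial V/\partial\nu|^2)\,dS$. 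Thus $B(r)$ itself can be written as $2r^{N-1}$ times the normal boundary integral plus interior terms of the same type already met.

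The key steps, in order: (i) Use \eqref{Pohozaev-2} to write $\int_{S_r^+}(|\nabla U|^2+|\nabla V|^2)\,dS=\frac2r\int_{S_r^+}(|\partial U/\partial\nu|^2+|\partial V/\partial\nu|^2)\,dS+\frac{N-1}{r}\cdot\frac1r\int_{B_r^+}(|\nabla U|^2+|\nabla V|^2)\,dz-\frac2r\cdot\frac1r\int_{B_r^+}V(z\cdot\nabla U)\,dz+\frac2r\cdot\frac1r\int_{B_r'}hu(x\cdot\nabla_x v)\,dx$, hence $B(r)=2r^{N-1}[\text{normal integral}]+r^{N-2}[\cdots]$. (ii) Substitute the expression for the normal integral coming from \eqref{D'F}, obtaining $B(r)=2r^{N-1}D'(r)+r^{N-2}(\text{sum of error terms})$. (iii) Estimate each error term by $C(D(r)+C_2H(r))$: the terms $\int_{B_r^+}(|\nabla U|^2+|\nabla V|^2)\,dz$, $\int_{B_r^+}V(z\cdot\nabla U)\,dz$, $\int_{B_r^+}UV\,dz$ are handled by \eqref{eq:8}, \eqref{eq:9} (Remark \ref{rem:st}); $\int_{S_r^+}UV\,dS$ by $\le\frac12\int_{S_r^+}(U^2+V^2)\,dS=\frac12 r^N H(r)$; $\int_{B_r'}huv\,dx$ by \eqref{eq:3}; and the interior/boundary terms generated from $\int_{B_r'}hu(x\cdot\nabla_x v)\,dx$ (both the one appearing directly and the $\frac12\int_{\partial B_r'}huv\,dS'$) by the boundary-integral identity \eqref{eq:18} together with its consequence \eqref{eq:*} and the analogous estimate for $\int_{B_r'}hv(x\cdot\nabla_x u)\,dx$, exactly as in the proof of Lemma \ref{l:STIMA_DI_nu_2}. (iv) Absorb the only genuinely problematic contribution — the $r\sqrt{r^N H(r) B(r)}$ appearing in \eqref{eq:*} — back into $B(r)$: since it enters $B(r)$ multiplied by $r^{N-2}\cdot\frac1{r^{N-1}}$-type factors, after tracking powers it contributes a term like $C\,r\sqrt{H(r)B(r)/r^N}\cdot r^{N-1}$; by Young's inequality $C\sqrt{r^{N-?}H(r)B(r)}\le\frac12 B(r)+C'r^{?}H(r)$, so the $\frac12 B(r)$ is absorbed into the left-hand side (doubling the constant in front of $r^{N-1}D'(r)$ as in the statement) and the rest is added to the $H(r)$ term, fixing $C_1,C_2$.

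The main obstacle I expect is precisely step (iv): the self-referential appearance of $B(r)$ through \eqref{eq:*} must be eliminated by a careful Young's-inequality absorption while keeping the coefficient of $r^{N-1}D'(r)$ equal to $2$ (hence the factor $2$ in the statement, rather than $1$) — one needs to choose the splitting in Young so that exactly half of $B(r)$ is peeled off and moved left, leaving $2r^{N-1}D'(r)$ on the right. A secondary issue is bookkeeping of the powers of $r$: \eqref{D'F} carries $r^{-N}$ and $r^{-(N-1)}$ weights while $B(r)$ is an unweighted surface integral, so one must multiply through by the right power of $r$ and check that all residual error terms collapse to the advertised $r^{N-2}(D(r)+C_2H(r))$ and that $D(r)+C_2H(r)\ge0$ for small $r$, which follows from Corollary \ref{c:limpos} (or directly from \eqref{eq:5}, giving $D(r)\ge -C_2H(r)$ for $r$ small enough with a suitable $C_2>0$).
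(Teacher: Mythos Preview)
Your plan is correct in spirit and would succeed, but it takes an unnecessarily long detour compared with the paper. The paper does \emph{not} start from the Pohozaev-processed formula \eqref{D'F}; instead it writes down the elementary identity
\[
D'(r)=r^{1-N}B(r)-(N-1)r^{-1}D(r)+r^{1-N}\!\int_{S_r^+}\!UV\,dS-r^{1-N}\!\int_{\partial B_r'}\!huv\,dS',
\]
which is nothing but the product rule applied to $D(r)=r^{1-N}I(r)$ (this is \eqref{eq:21} in the paper). From there one only has to control $\int_{\partial B_r'}huv\,dS'$: the paper does this exactly as you outline, using \eqref{eq:14} and \eqref{eq:*}, applies Young's inequality to the $\sqrt{r^NH(r)B(r)}$ term with parameter $\e=\tfrac14$, absorbs $2\e B(r)=\tfrac12 B(r)$ into the left, and invokes Corollary~\ref{c:limpos} for $D+C_2H\ge0$.

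Your route (Pohozaev \eqref{Pohozaev-2} to express $B(r)$ via the normal part, then \eqref{D'F} for the normal part) is circular: since \eqref{D'F} was itself obtained from \eqref{eq:21} by inserting \eqref{Pohozaev-2}, your combination undoes that substitution and, after the cancellations of the $\int_{B_r^+}V(z\cdot\nabla U)$ and $\int_{B_r'}hu(x\cdot\nabla_xv)$ terms (which you did not notice), reproduces \eqref{eq:21} exactly. In particular your step~(ii) gives the leading term $r^{N-1}D'(r)$, not $2r^{N-1}D'(r)$ as you wrote; the factor~$2$ appears only after the absorption in step~(iv), as you correctly say there. So nothing is wrong, but you can shortcut steps~(i)--(ii) entirely by differentiating $D$ from its definition.
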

\begin{proof}
From the definition of $D$ (see \eqref{eq:D(r)}) we have that
\begin{equation}\label{eq:21}
  D'(r)=r^{1-N}B(r)-(N-1)r^{-1}D(r)+r^{1-N}\int_{S_r^+}UV\,dS-r^{1-N}\int_{\partial B_r'}huv\,dS'.
\end{equation}
From \eqref{eq:14} it follows that
\begin{equation*}
\int_{\partial B_r'}huv\,dS'=\frac1r\int_{B_r'}hu\,x\cdot
  \nabla_x v\,dx+\frac1r \int_{B_r'}hv\,x\cdot
  \nabla_x u\,dx+\frac1r \int_{B_r'}uv(Nh+x\cdot
  \nabla h)\,dx.
\end{equation*}
 By \eqref{eq:*} and \eqref{eq:8} we deduce that, for
every $\e>0$, there exists $C_\e>0$ such that
\begin{align*}
\bigg|\frac1r\int_{B_r'}&hu\,x\cdot
  \nabla_x v\,dx\bigg|\\
&\leq \e B(r)+C_\e r^N H(r)+O(1) \int_{B_{r}^+} \left(|\nabla U|^2+|\nabla
V|^2\right)dz+O(1)r^{N-1}H(r)\\
&\leq \e B(r)+O(1)r^{N-1}H(r)+O(1)r^{N-1}D(r)\quad\text{as }r\to0^+.
\end{align*}
An analogous estimate holds for the term $\frac1r \int_{B_r'}hv\,x\cdot
  \nabla_x u\,dx$, whereas \eqref{eq:15} implies that
\[
\frac1r \int_{B_r'}uv(Nh+x\cdot
  \nabla h)\,dx =O(1)r^{N-1}H(r)+O(1)r^{N-1}D(r) \quad\text{as }r\to0^+.
\]
Therefore we conclude that
\begin{equation}\label{eq:20}
  \left|\int_{\partial B_r'}huv\,dS'\right|\leq  2\e B(r)+O(1)r^{N-1}H(r)+O(1)r^{N-1}D(r)\quad\text{as }r\to0^+.
\end{equation}
From Corollary \ref{c:limpos}, \eqref{eq:21} and \eqref{eq:20}, choosing $\e=\frac14$, we deduce
that, for some constants $C_1,C_2>0$ independent of $r$,
$D(r)+C_2H(r)\geq0$ and
\[
D'(r)\geq \frac12
r^{1-N}B(r)-\frac{C_1}2r^{-1}(D(r)+C_2H(r))\quad\text{for all $r$
  sufficiently small}.
\]
The proof is thereby complete.
\end{proof}

\begin{Lemma}\label{l:limitN}
Let $\mathcal N:(0,r_0)\to\R$ be defined in \eqref{eq:7}. Then
\begin{equation}\label{eq:23}
  \mathcal N(r)=O(1)\quad\text{as }r\to0^+.
\end{equation}
Furthermore
the limit
$$
\gamma:=\lim_{r\to 0^+}\mathcal N(r)
$$
exists, is finite and
\begin{equation*}
\gamma\geq 0.
\end{equation*}
\end{Lemma}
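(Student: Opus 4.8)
The plan is to establish the boundedness \eqref{eq:23} first, and then deduce the existence of the limit from a Gronwall-type argument applied to $\mathcal N + \text{const}$. For the boundedness, I would combine Lemma \ref{l:STIMA_DI_nu_2} with Lemma \ref{l:stima_B}. Indeed, Lemma \ref{l:STIMA_DI_nu_2} gives $\nu_2(r) = O\big(1 + \mathcal N(r) + r\sqrt{B(r)/(r^N H(r))}\big)$, while Lemma \ref{l:stima_B} bounds $B(r)$ by $2r^{N-1}D'(r) + C_1 r^{N-2}(D(r) + C_2 H(r))$. Since $H'(r) = \frac2r D(r)$ by \eqref{H'2}, we have $r^{N-1}D'(r)/(r^N H(r)) = D'(r)/(rH(r))$, and $D(r) = \frac r2 H'(r)$, so the troublesome square-root term becomes controllable by $\frac{D'(r)}{H(r)}$ together with $\mathcal N(r)$ and a bounded term. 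Using the elementary inequality $r\sqrt{a} \le \frac12(\varepsilon^{-1} + \varepsilon r^2 a)$ and $\nu_1(r) \ge 0$ (which is the Cauchy--Schwarz inequality in the numerator of $\nu_1$), I would absorb the resulting $D'$-contribution against $\mathcal N'(r) = \nu_1(r) + \nu_2(r)$. More precisely, writing everything in terms of $\widetilde{\mathcal N}(r) := \mathcal N(r) + C_2$, which is $\ge 0$ for small $r$ by Lemma \ref{l:stima_B}, one obtains a differential inequality of the form
\[
\widetilde{\mathcal N}'(r) \ge -\frac{C}{r}\,\widetilde{\mathcal N}(r) - C
\]
for $r$ small, possibly after re-deriving the relation $\frac{D'(r)}{H(r)} = \mathcal N'(r) + \frac{\mathcal N(r)^2}{r}\cdot(\text{bounded})$ type identities; the key algebraic fact is $\big(\frac DH\big)' = \frac{D'}{H} - \frac DH\cdot\frac{H'}{H} = \frac{D'}{H} - \frac{2}{r}\mathcal N$, so $\frac{D'}{H} = \mathcal N'(r) + \frac2r\mathcal N(r)$.

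Once such a differential inequality for $\widetilde{\mathcal N}$ is in hand, integrating the factor $r^{C}$ shows $r \mapsto r^{C}(\widetilde{\mathcal N}(r) + 1)$ is monotone nondecreasing near $0$ (or some similar monotone quantity), hence $\widetilde{\mathcal N}(r)$ is bounded above as $r \to 0^+$; combined with \eqref{eq:liminf} from Corollary \ref{c:limpos}, this yields $\mathcal N(r) = O(1)$, proving \eqref{eq:23}. With boundedness established, the error term in Lemma \ref{l:STIMA_DI_nu_2} simplifies: $\nu_2(r) = O(1) + O(r\sqrt{B(r)/(r^N H(r))})$, and using boundedness of $\mathcal N$ in Lemma \ref{l:stima_B} gives $B(r) = O(r^{N-1}D'(r)) + O(r^{N-2}H(r))$, so that $r\sqrt{B(r)/(r^N H(r))} = O\big(\sqrt{r D'(r)/H(r)}\,\big) + O(1) = O\big(\sqrt{r\,\widetilde{\mathcal N}'(r) + \mathcal N(r)}\,\big) + O(1)$. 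Thus $\mathcal N'(r) = \nu_1(r) + \nu_2(r) \ge \nu_2(r) \ge -C - C\sqrt{r\widetilde{\mathcal N}'(r)}$, which after an absorption argument ($C\sqrt{r\widetilde{\mathcal N}'(r)} \le \frac12 \widetilde{\mathcal N}'(r) + \frac{C^2 r}{2}$, valid when $\widetilde{\mathcal N}'(r) \ge 0$; the case $\widetilde{\mathcal N}'(r) < 0$ is trivial) shows $\widetilde{\mathcal N}'(r) \ge -2C - C^2 r$. Hence $r \mapsto \widetilde{\mathcal N}(r) + 2Cr + \frac{C^2}{2}r^2$ is monotone nondecreasing on $(0,\bar r)$, and being bounded it has a finite limit as $r \to 0^+$; consequently $\gamma = \lim_{r\to 0^+}\mathcal N(r)$ exists and is finite. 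Finally $\gamma \ge 0$ follows immediately from \eqref{eq:liminf} in Corollary \ref{c:limpos}.

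The main obstacle I anticipate is the careful bookkeeping in the absorption step: the term $r\sqrt{B(r)/(r^N H(r))}$ couples $D'$ (hence $\mathcal N'$) back into the estimate for $\nu_2$, so one must be careful that the coefficient in front of $\widetilde{\mathcal N}'(r)$ after absorption is strictly less than $1$ (so it can genuinely be moved to the left side of $\mathcal N'(r) \ge \nu_2(r)$), and that $\nu_1(r) \ge 0$ is genuinely available to discard. A secondary subtlety is that the differential inequalities hold only in the distributional sense / a.e. on $(0, r_0)$ since $\mathcal N \in W^{1,1}_{\mathrm{loc}}$ by Lemma \ref{mono}; the monotonicity conclusions should be phrased in terms of absolutely continuous representatives, and one should verify that the function whose monotonicity is asserted is indeed in $W^{1,1}_{\mathrm{loc}}$ with the claimed nonnegative distributional derivative, so that it admits a monotone representative and the pointwise limit exists.
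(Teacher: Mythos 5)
There is a genuine gap in your boundedness step. First, the key algebraic identity you invoke is wrong: since $H'(r)=\tfrac2r D(r)$ by \eqref{H'2}, one has $\big(\tfrac DH\big)'=\tfrac{D'}H-\tfrac DH\cdot\tfrac{H'}H=\tfrac{D'}H-\tfrac2r\,\mathcal N^2(r)$, so $\tfrac{D'}H=\mathcal N'+\tfrac2r\mathcal N^2$, not $\mathcal N'+\tfrac2r\mathcal N$. Consequently, when you insert Lemma \ref{l:stima_B} into Lemma \ref{l:STIMA_DI_nu_2} and apply $r\sqrt a\le\tfrac12(\varepsilon^{-1}+\varepsilon r^2a)$ with $a=B/(r^NH)$, the term $\varepsilon r D'/H$ produces, besides $\varepsilon r\mathcal N'$ (which you can absorb), a quadratic term of order $\varepsilon\mathcal N^2$; the inequality you actually obtain is of Riccati type, $\mathcal N'\ge -C\big(1+\mathcal N+\varepsilon\mathcal N^2\big)$, and such an inequality does not by itself give $\mathcal N(r)=O(1)$ as $r\to0^+$ (backward solutions of $y'=-c\varepsilon y^2$ may blow up) without an additional smallness/bootstrap argument that you do not supply. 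Second, and independently, the differential inequality you aim for, $\widetilde{\mathcal N}'(r)\ge-\tfrac Cr\widetilde{\mathcal N}(r)-C$, is too weak for the conclusion you draw from it: monotonicity of $r\mapsto r^{C}\big(\widetilde{\mathcal N}(r)+1\big)$ only yields $\widetilde{\mathcal N}(r)=O(r^{-C})$, not boundedness (for instance $y(r)=r^{-C/2}$ satisfies $y'\ge-\tfrac Cr y-C$ and is unbounded). So \eqref{eq:23} is not established by the argument as written, and everything downstream depends on it.

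The repair --- and the route the paper takes --- is to keep the square root, so that the estimate remains \emph{linear} in $\mathcal N$, and to split according to the sign of $\mathcal N'$, i.e.\ to work on the set $\Sigma=\{r:D'(r)H(r)\le H'(r)D(r)\}$. On $\Sigma$ one has $D'\le\tfrac{H'D}H=\tfrac2r\tfrac{D^2}H$, so Lemma \ref{l:stima_B} gives $B(r)\le 4r^{N-2}\tfrac{D^2(r)}{H(r)}+C_1r^{N-2}(D(r)+C_2H(r))$ and hence $r\sqrt{B(r)/(r^NH(r))}\le\sqrt{4\mathcal N^2(r)+C_1(\mathcal N(r)+C_2)}\le C(1+\mathcal N(r))$; combined with $\nu_1\ge0$, Lemma \ref{mono} and Lemma \ref{l:STIMA_DI_nu_2}, this yields $\mathcal N'(r)\ge-\tilde C(1+\mathcal N(r))$ a.e.\ on $\Sigma$, while off $\Sigma$ the same inequality is trivial since $\mathcal N'>0$ there. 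This inequality has no $1/r$ and no quadratic term, so a Gronwall integration gives $\mathcal N(r)+1\le e^{\tilde C\tilde r}(\mathcal N(\tilde r)+1)$, i.e.\ \eqref{eq:23}, and the monotonicity of $r\mapsto e^{\tilde Cr}(1+\mathcal N(r))$ then gives the existence of the limit, with $\gamma\ge0$ from Corollary \ref{c:limpos}. Your second phase (existence of the limit once boundedness is known) is essentially sound --- indeed its case analysis on the sign of $\mathcal N'$ is exactly the dichotomy that should have been used in the first phase --- and your remarks on $W^{1,1}_{\rm loc}$ representatives are fine, but they do not rescue the boundedness step, which is where the content of the lemma lies.
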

\begin{proof}
  Let us consider the set
\[
\Sigma=\{r\in(0,r_0):D'(r)H(r)\leq H'(r) D(r)\}
\]
(which is well-defined up to a zero measure set).

If there exists $r\in(0,r_0]$ such that $|(0,r)\cap\Sigma|_1=0$ (where
$|\cdot|_1$ stands for the Lebesgue measure in $\R$) we have that
$\mathcal N'\geq 0$ a.e. in $(0,r)$ and hence $\mathcal N$ is
non-decreasing in $(0,r)$ and admits a limit as $r\to0^+$ which is
necessarily finite and non-negative due to \eqref{eq:liminf}.

Let us now assume that, for all $r\in(0,r_0]$,
$|(0,r)\cap\Sigma|_1>0$. In view of Lemma \ref{l:stima_B} and \eqref{H'2} we have that,
a.e. in $(0,r_0)\cap \Sigma$,
\begin{align}\label{eq:22}
B(r)&\leq
2 r^{N-1}\frac{H'(r)D(r)}{H(r)}+C_1 r^{N-2}  (D(r)+C_2 H(r))\\
\notag&=4 r^{N-2}\frac{D^2(r)}{H(r)}+C_1  r^{N-2}  (D(r)+C_2 H(r)).
\end{align}
Schwarz inequality implies that the function $\nu_1$ appearing in
Lemma \ref{mono} is non-negative, hence \eqref{formulona},
Lemma \ref{l:STIMA_DI_nu_2}, and \eqref{eq:22} imply that
\[
\mathcal N'(r)\geq O(1)\left(
1+\mathcal N(r)+\sqrt{4 \mathcal N^2(r)+C_1(\mathcal
  N(r)+C_2)}\right)
\]
as $r\to0^+$, $r\in \Sigma$. Hence there exist $\tilde C,\tilde r>0$
such that
\[
\mathcal N'(r)\geq -\tilde C\left(
1+\mathcal N(r)\right)\quad\text{for a.e. }r\in(0,\tilde r)\cap\Sigma.
\]
Since the above inequality is obviously true in $(0,\tilde
r)\setminus\Sigma$ (provided $\tilde r$ is sufficiently small), we
deduce that
\begin{equation}\label{eq:25}
\mathcal N'(r)\geq -\tilde C\left(
1+\mathcal N(r)\right)\quad\text{for a.e. }r\in(0,\tilde r).
\end{equation}
Integrating the above inequality in $(r,\tilde r)$ we obtain that
\[
\mathcal N(r)+1\leq e^{\tilde C \tilde r} (\mathcal N(\tilde r)+1) \quad\text{for all }r\in(0,\tilde r).
\]
The above estimate together with Corollary \ref{c:limpos} yield
\eqref{eq:23}.
Furthermore \eqref{eq:25} implies that
\[
\left(e^{\tilde C r}(1+\mathcal N(r))\right)'\geq 0 \quad\text{a.e. in
}(0,\tilde r),
\]
hence the function $r\mapsto e^{\tilde C r}(1+\mathcal N(r))$ admits a
limit as $r\to0^+$. Therefore also the limit
$\gamma:=\lim_{r\to 0^+}\mathcal N(r)$ exists; furthermore $\gamma$ is
finite in view of \eqref{eq:23} and $\gamma\geq0$ in view of \eqref{eq:liminf}.
\end{proof}

A first consequence of the previous monotonicity argument is the
following estimate of the function $H$.
\begin{Lemma}\label{l:uppb}
Letting $\gamma$ be as in Lemma \ref{l:limitN}, we have that
\begin{equation}\label{eq:24}
H(r)=O(r^{2\gamma})\quad \text{as $r\to0^+$}.
\end{equation}
Furthermore, for any $\sigma>0$ there exist
$K(\sigma)>0$ depending on $\sigma$ such that
\begin{equation} \label{2ndest}
H(r)\geq K(\sigma)\,
  r^{2\gamma+\sigma} \quad \text{for all } r\in (0, r_0).
\end{equation}
\end{Lemma}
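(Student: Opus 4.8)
The plan is to exploit the relation $H'(r)=\frac{2}{r}D(r)$ from \eqref{H'2}, which upon dividing by $H(r)$ gives
\[
\frac{d}{dr}\log H(r)=\frac{H'(r)}{H(r)}=\frac{2}{r}\,\frac{D(r)}{H(r)}=\frac{2}{r}\,\mathcal N(r)
\quad\text{for a.e. }r\in(0,r_0).
\]
This identity is the engine: integrating it between two radii converts asymptotic control of $\mathcal N$ near $0$ into two-sided power-type bounds on $H$.

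First I would prove the upper bound \eqref{eq:24}. Since $\gamma=\lim_{r\to0^+}\mathcal N(r)$ by Lemma \ref{l:limitN}, for every $\e>0$ there is $r_\e\in(0,r_0)$ such that $\mathcal N(r)\le \gamma+\e$ for all $r\in(0,r_\e)$. Feeding this into the logarithmic-derivative identity yields, for $0<r<\rho<r_\e$,
\[
\log\frac{H(\rho)}{H(r)}=\int_r^\rho \frac{2\mathcal N(t)}{t}\,dt\le 2(\gamma+\e)\log\frac{\rho}{r},
\]
hence $H(r)\ge H(\rho)\,(r/\rho)^{2(\gamma+\e)}$ rearranged the other way gives $H(r)\le H(\rho)\,\rho^{-2(\gamma+\e)}\, r^{2(\gamma+\e)}$, i.e. $H(r)=O(r^{2(\gamma+\e)})$ as $r\to0^+$. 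To upgrade this to the sharp exponent $2\gamma$ claimed in \eqref{eq:24}, I would instead use the refined lower bound $\mathcal N(r)\ge \gamma$ for the \emph{exact} limit together with the fact, extractable from \eqref{eq:25} and the monotonicity of $r\mapsto e^{\tilde C r}(1+\mathcal N(r))$ established in the proof of Lemma \ref{l:limitN}, that $\mathcal N(r)\ge \gamma - \tilde C' r$ near $0$ for some constant; then $\int_r^\rho \frac{2\mathcal N(t)}{t}\,dt\le 2\gamma\log\frac\rho r + C(\rho-r)$, which gives $H(r)\le \text{const}\cdot r^{2\gamma}$ directly.

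For the lower bound \eqref{2ndest}, fix $\sigma>0$. Since $\mathcal N(r)\to\gamma$, there is $r_\sigma$ with $\mathcal N(r)\le \gamma+\frac\sigma2$ for $r<r_\sigma$; using the logarithmic identity on $(r,r_0)$ (after first shrinking so everything is defined and $H>0$, which holds on $(0,r_0)$ by Lemma \ref{l:Hpos}),
\[
\log\frac{H(r_0)}{H(r)}=\int_r^{r_0}\frac{2\mathcal N(t)}{t}\,dt\le \int_r^{r_\sigma}\!\!\big(2\gamma+\sigma\big)\frac{dt}{t}+\int_{r_\sigma}^{r_0}\!\!\frac{2\mathcal N(t)}{t}\,dt
\le (2\gamma+\sigma)\log\frac{1}{r}+ C_\sigma,
\]
where $C_\sigma$ collects $(2\gamma+\sigma)\log r_\sigma$ and the (finite) integral over $[r_\sigma,r_0]$, using $\mathcal N=O(1)$ from \eqref{eq:23}. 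Rearranging, $H(r)\ge H(r_0)e^{-C_\sigma}\,r^{2\gamma+\sigma}=:K(\sigma)\,r^{2\gamma+\sigma}$ for all $r\in(0,r_0)$, which is exactly \eqref{2ndest}.

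The main obstacle I anticipate is not the integration itself but justifying the manipulation $\frac{d}{dr}\log H = H'/H$ in the required regularity class: $H\in W^{1,1}_{\mathrm{loc}}(0,R)$ and $H>0$ on $(0,r_0)$ by Lemma \ref{l:Hpos}, so $\log H\in W^{1,1}_{\mathrm{loc}}(0,r_0)$ with $(\log H)'=H'/H$ a.e., and the fundamental theorem of calculus for absolutely continuous functions applies on compact subintervals — this makes all the displayed integrations rigorous. A secondary, more delicate point is obtaining the sharp exponent $2\gamma$ (rather than $2\gamma+\e$) in \eqref{eq:24}; this is where one must squeeze out the quantitative one-sided estimate $\mathcal N(r)\ge\gamma-O(r)$ from the differential inequality \eqref{eq:25}, and I would present that as the only genuinely non-routine step.
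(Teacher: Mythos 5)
Your argument is correct and is essentially the proof the paper relies on (the paper simply cites [FF, Lemma 3.16], which integrates $(\log H)'(r)=\frac{2}{r}\mathcal N(r)$ from \eqref{H'2} and uses the quantitative one-sided bound $\mathcal N(r)\ge(1+\gamma)e^{-\tilde C r}-1\ge\gamma-\mathrm{const}\,r$ obtained from the monotonicity of $r\mapsto e^{\tilde C r}(1+\mathcal N(r))$, exactly as you propose). One small slip to fix: in your display for the sharp upper bound the inequality sign should be reversed, since $\mathcal N(t)\ge\gamma-\tilde C' t$ gives $\int_r^\rho \frac{2\mathcal N(t)}{t}\,dt\ge 2\gamma\log\frac{\rho}{r}-C(\rho-r)$, and it is this \emph{lower} bound on the integral that, via $H(r)=H(\rho)\exp\big(-\int_r^\rho \frac{2\mathcal N(t)}{t}\,dt\big)$, yields $H(r)\le \mathrm{const}\; r^{2\gamma}$.
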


\begin{proof}  See the proof of \cite[Lemma 3.16]{FF}.
\end{proof}

\section{Blow-up analysis}\label{sec:blow-up-analysis}

\begin{Lemma}\label{l:blowup}
 Let $(U,V)\in H^1(B_R^+)\times H^1(B_R^+)$ be  a weak solution to
\eqref{eq:system} such that $(U,V)\not=(0,0)$,
  let $\mathcal N$ be defined  in  \eqref{eq:7}, and let $\gamma:=\lim_{r\rightarrow 0^+} {\mathcal
    N}(r)$ be as in Lemma \ref{l:limitN}. Then
\begin{itemize}
\item[\rm (i)] there exists $\ell\in \N$ such that
  $\gamma=\ell$;
\item[\rm (ii)] for every sequence $\lambda_n\to0^+$, there exist a subsequence
$\{\lambda_{n_k}\}_{k\in\N}$ and $2M_\ell$ real constants
$\beta_{\ell,m},\beta'_{\ell,m}$, $m=1,2,\dots,M_\ell$,  such that
$\sum_{m=1}^{M_\ell}((\beta_{\ell,m})^2+(\beta'_{\ell,m})^2)=1$
and
\[
\frac{U(\lambda_{n_k}z)}{\sqrt{H(\lambda_{n_k})}}\to
|z|^{\ell}
\sum_{m=1}^{M_\ell}\beta_{\ell,m}Y_{\ell,m}\Big(\frac
z{|z|}\Big),\quad
\frac{V(\lambda_{n_k}z)}{\sqrt{H(\lambda_{n_k})}}\to
|z|^{\ell}
\sum_{m=1}^{M_\ell}\beta'_{\ell,m}Y_{\ell,m}\Big(\frac
z{|z|}\Big),
\]
weakly in $H^1(B_1^+)$ and strongly in $H^1(B_r^+)$ for all
$r\in(0,1)$.
See Section \ref{s:introduction} for the definition of $M_\ell$ and $Y_{\ell,m}$.
\end{itemize}
\end{Lemma}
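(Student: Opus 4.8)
The plan is the standard Almgren-type blow-up scheme, adapted to the two-component setting. Define the rescaled functions
\[
U^{\lambda}(z)=\frac{U(\lambda z)}{\sqrt{H(\lambda)}},\qquad
V^{\lambda}(z)=\frac{V(\lambda z)}{\sqrt{H(\lambda)}},
\]
which by construction satisfy $\int_{S_1^+}\big((U^{\lambda})^2+(V^{\lambda})^2\big)\,dS=1$ and solve a rescaled version of \eqref{eq:system} on $B_{R/\lambda}^+$ with coupling potential $h_\lambda(x)=\lambda^2 h(\lambda x)$. First I would show that $\{(U^{\lambda},V^{\lambda})\}$ is bounded in $H^1(B_1^+)$ as $\lambda\to0^+$: the Dirichlet energy of $(U^{\lambda},V^{\lambda})$ on $B_1^+$ is, up to lower order terms controlled via Lemma \ref{l:hardyball} and Lemma \ref{l:Poincare}, comparable to $D(\lambda)/H(\lambda)=\mathcal N(\lambda)$, which is bounded by Lemma \ref{l:limitN}; together with the normalization on $S_1^+$ and the Poincaré inequality this gives a uniform $H^1(B_1^+)$ bound. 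Hence along any sequence $\lambda_n\to0^+$ we extract a subsequence $\lambda_{n_k}$ with $U^{\lambda_{n_k}}\weakly \widehat U$, $V^{\lambda_{n_k}}\weakly \widehat V$ weakly in $H^1(B_1^+)$ and, by compact trace embedding, strongly in $L^2(S_1^+)$ and in $L^2(B_1')$, so that $\int_{S_1^+}(\widehat U^2+\widehat V^2)\,dS=1$ and in particular $(\widehat U,\widehat V)\neq(0,0)$.

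Next I would upgrade weak to strong $H^1(B_r^+)$ convergence for $r<1$ and identify the limit equation. Passing to the limit in the weak formulation of the rescaled system, and using $h_\lambda\to0$ uniformly on compact sets (since $h$ is bounded and $h_\lambda(x)=\lambda^2 h(\lambda x)$), the limit pair $(\widehat U,\widehat V)$ solves the decoupled homogeneous Neumann problem
\[
\begin{cases}
\Delta \widehat U=0,\ \Delta\widehat V=0,&\text{in }B_1^+,\\
\frac{\partial \widehat U}{\partial\nu}=\frac{\partial \widehat V}{\partial\nu}=0,&\text{on }B_1',
\end{cases}
\]
because the coupling term $\widehat U\widehat V$ in $D$ is a lower-order perturbation that scales away: indeed the term $\int_{B_\lambda^+}UV\,dz$ contributes to the rescaled energy a factor $O(\lambda^2)$ by Lemma \ref{l:Poincare}, and likewise the boundary-coupling term vanishes. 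Strong convergence in $H^1(B_r^+)$ then follows by testing the difference of the rescaled equations against $(U^{\lambda_{n_k}}-\widehat U,V^{\lambda_{n_k}}-\widehat V)$ truncated suitably, or more simply by the Pohozaev identity \eqref{Pohozaev-1} applied at radius $r$ which shows convergence of the energies; combined with weak convergence this gives strong convergence. Reflecting evenly across $t=0$ (which is legitimate since $\partial_\nu\widehat U=\partial_\nu\widehat V=0$), $\widehat U$ and $\widehat V$ extend to harmonic functions on $B_1$ symmetric in $t$; expanding in spherical harmonics on $\mathbb S^N$ and retaining only those symmetric with respect to the equator $\{t=0\}$—i.e. the eigenfunctions $Y_{\ell,m}$ of \eqref{eq:sph_eig}—we may write $\widehat U(z)=\sum_{j\ge0}|z|^{j}\sum_{m}a_{j,m}Y_{j,m}(z/|z|)$ and similarly for $\widehat V$.

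It remains to pin down the homogeneity degree, i.e. to prove (i). Here I would use that $\mathcal N(\lambda)\to\gamma$ together with the scaling invariance of the frequency: for fixed $r\in(0,1)$ the frequency of $(U^{\lambda},V^{\lambda})$ at radius $r$ equals $\mathcal N(\lambda r)$, hence tends to $\gamma$ as $\lambda\to0^+$. On the other hand, passing to the strong limit, the frequency of $(U^{\lambda_{n_k}},V^{\lambda_{n_k}})$ at radius $r$ converges to the Almgren frequency of the harmonic pair $(\widehat U,\widehat V)$ at radius $r$, namely
\[
\frac{r^{1-N}\int_{B_r^+}(|\nabla\widehat U|^2+|\nabla\widehat V|^2)\,dz}{r^{-N}\int_{S_r^+}(\widehat U^2+\widehat V^2)\,dS},
\]
which—since both components are finite combinations (the sum over $m$ is finite once we localize, but a priori infinite over $j$)—one shows is constant in $r$ only if $\widehat U$ and $\widehat V$ are both homogeneous of the same degree $\ell$, a nonnegative integer since they are polynomials. (If one component vanishes identically the degree is read off from the other, which cannot vanish too since $(\widehat U,\widehat V)\neq(0,0)$.) Therefore $\gamma=\ell\in\N$, $\widehat U(z)=|z|^{\ell}\sum_{m=1}^{M_\ell}\beta_{\ell,m}Y_{\ell,m}(z/|z|)$ and $\widehat V(z)=|z|^{\ell}\sum_{m=1}^{M_\ell}\beta'_{\ell,m}Y_{\ell,m}(z/|z|)$ with $\sum_m(\beta_{\ell,m}^2+(\beta'_{\ell,m})^2)=1$ coming from the normalization on $S_1^+$ (using that $\{Y_{\ell,m}\}$ is $L^2(\mathbb S^N_+)$-orthonormal, so its $L^2(S_1^+)$ norm, i.e. the half-sphere norm, is what appears). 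The main obstacle is the strong $H^1_{loc}$ convergence and the rigorous justification that the coupling terms in $D$ and in the equation are genuinely negligible under rescaling: this requires careful use of the Hardy-trace inequality (Lemma \ref{l:hardyball}) and the Poincaré inequality (Lemma \ref{l:Poincare}) together with the energy estimates \eqref{eq:8}--\eqref{eq:9}, mirroring the single-equation argument in \cite{FF} but now tracking both components simultaneously.
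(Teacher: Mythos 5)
Your proposal is correct in substance and follows the paper's skeleton for most of the argument: the same rescaling by $\sqrt{H(\lambda)}$, the uniform $H^1(B_1^+)$ bound deduced from $\mathcal N(\lambda)=O(1)$ together with Lemmas \ref{l:hardyball} and \ref{l:Poincare}, nontriviality of the weak limit via compactness of the trace on $S_1^+$, identification of the limit as the decoupled harmonic Neumann problem, and the scaling identity $\mathcal N_k(r)=\mathcal N(\lambda_{n_k}r)\to\gamma$ forcing the limit frequency to be constant in $r$. Where you genuinely deviate is in the last two steps. For strong $H^1(B_r^+)$ convergence the paper invokes elliptic regularity under Neumann conditions to get uniform $H^2(B_r^+)$ bounds and then Rellich; your first suggestion (testing the difference of the rescaled and limit equations against a cutoff times the difference) works and is a legitimate alternative, but your fallback via the Pohozaev identity \eqref{Pohozaev-1} is shaky as stated, since passing to the limit in the flux term $\int_{S_r^+}U_\lambda\frac{\partial U_\lambda}{\partial\nu}\,dS$ already requires control of normal derivatives on $S_r^+$, i.e.\ essentially the $H^2$ estimate you were trying to avoid. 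For the classification of the limit, the paper differentiates the limit frequency, uses that $\widetilde{\mathcal N}'\equiv 0$ forces equality in the Cauchy--Schwarz inequality, deduces that $(\widetilde U,\widetilde V)$ is parallel to $(\partial_\nu\widetilde U,\partial_\nu\widetilde V)$ on every hemisphere, and solves the resulting radial ODE to get $\varphi(r)=r^\ell$; you instead reflect evenly, expand the harmonic limits in spherical harmonics and observe that the joint frequency equals $\big(\sum_j j\,c_j r^{2j}\big)/\big(\sum_j c_j r^{2j}\big)$, which is constant only if a single degree $\ell$ carries all the mass of the pair. This is a valid and somewhat more direct route once harmonicity of the limit is in hand (and it gives positivity of $\widetilde H(r)$ for free from the expansion, a point the paper has to argue separately via unique continuation), whereas the paper's Cauchy--Schwarz/separation-of-variables argument is the one that adapts when the limit equation is not exactly harmonic. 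Two small slips to fix: the rescaled Neumann coupling is $\lambda\, h(\lambda x)u_\lambda$, not $\lambda^2 h(\lambda x)u_\lambda$ (harmless, since both vanish as $\lambda\to0^+$), and $\gamma\ge 0$ from Lemma \ref{l:limitN} should be recalled when asserting the limiting degree is a nonnegative integer.
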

\begin{proof}
Let us define
\begin{equation}\label{eq:UVtau}
U_\lambda(z)=\frac{U(\lambda z)}{\sqrt{H(\lambda)}},
\quad
V_\lambda(z)=\frac{V(\lambda z)}{\sqrt{H(\lambda)}}.
\end{equation}
We notice that
\begin{equation}\label{eq:26}
\Delta U_\lambda=\lambda^2V_\lambda\quad\text{and}\quad \int_{S_1^+}(U_{\lambda}^2+V_{\lambda}^2)dS=1.
\end{equation}
By scaling and \eqref{eq:23} we have
\begin{multline}\label{eq:8bis}
  \int_{B_{1}^+} \left(|\nabla U_\lambda(z)|^2 +|\nabla V_\lambda(z)|^2+\lambda^2 U_\lambda(z) V_\lambda(z)\right)\,dz -
  \lambda
\int_{B_1'}h(\lambda
x)U_\lambda(x,0) V_\lambda(x,0)\,dx
  \\={\mathcal N}(\lambda)=O(1)
\end{multline}
as $\lambda\to0^+$.
On the other hand, Lemmas \ref{l:hardyball} and \ref{l:Poincare} imply
\begin{multline*}
  \mathcal N(\lambda)\geq
\left(\int_{B_{1}^+} \left(|\nabla U_\lambda(z)|^2 +|\nabla V_\lambda(z)|^2\right)\,dz
\right)\left(1-\frac{\lambda^2}{2N}-
\frac{\lambda\|h\|_{L^{\infty}(B_{r_0}')}}{2\widetilde \Lambda}\right)\\
-\frac{\lambda^2}{2N}-\frac{\lambda\|h\|_{L^{\infty}(B_{r_0}')}(N-1)}{4\widetilde \Lambda}
\end{multline*}
so that \eqref{eq:8bis} and Lemma \ref{l:Poincare} imply
that
\begin{equation*}
  \{U_\lambda\}_{\lambda\in(0,\tilde \lambda)}
\text{ and }\{V_\lambda\}_{\lambda\in(0,\tilde \lambda)}
\text{ are bounded in }H^1(B_1^+)
\end{equation*}
for some $\tilde\lambda>0$.

Therefore, for any given sequence $\lambda_n\to 0^+$, there exists a
subsequence $\lambda_{n_k}\to0^+$ such that $U_{\lambda _{n_k}}\weakly \widetilde U$
and $V_{\lambda _{n_k}}\weakly\widetilde V$
 weakly in $H^1(B_1^+)$ for some $\widetilde U,\widetilde V\in H^1(B_1^+)$.
 From compactness of the trace embedding $H^1(B_1^+)\hookrightarrow L^2(S_1^+)$ and from \eqref{eq:26} we
deduce that
\begin{equation}\label{eq:27}
\int_{S_1^+}(\widetilde U^2+\widetilde V^2)dS=1,
\end{equation}
hence $(\widetilde U,\widetilde V)\neq(0,0)$, i.e. $\widetilde U$ and
$\widetilde V$ can not  both vanish identically.
For every  $\lambda\in (0,\tilde \lambda)$, the couple $(U_\lambda,V_\lambda)$ satisfies
\begin{equation} \label{eqlam}
\begin{cases}
   \Delta U_\lambda=\lambda^2 V_\lambda,&\text{in }B_1^+,\\
   \Delta V_\lambda=0,&\text{in }B_1^+,\\
\partial_\nu U_\lambda=0, &\text{on }B_1',\\
\partial_\nu V_\lambda=\lambda h(\lambda x)u_\lambda, &\text{on }B_1',
\end{cases}
\end{equation}
in a weak sense, i.e.
\begin{equation}\label{eq:8tau}
\begin{cases}
\int_{B_1^+}\nabla
U_\lambda\cdot\nabla \varphi\, dz=-\lambda^2
\int_{B_1^+}V_\lambda\varphi\, dz \\
\int_{B_1^+}\nabla
V_\lambda\cdot\nabla \varphi\, dz=\lambda
\int_{B_1'}h(\lambda x)u_\lambda(x)\mathop{\rm Tr}\varphi(x) \,dx,
\end{cases}
\end{equation}
for all $\varphi \in H^1(B_1^+)$  such that
$\varphi=0\text{ on }S_1^+$,
where $u_\lambda=\mathop{\rm Tr}U_\lambda$.
From the weak convergences  $U_{\lambda _{n_k}}\weakly \widetilde U$
and $V_{\lambda _{n_k}}\weakly\widetilde V$ in $H^1(B_1^+)$, we can pass to
the limit in \eqref{eq:8tau} to obtain
\begin{equation*}
\begin{cases}
\int_{B_1^+}\nabla
\widetilde U\cdot\nabla \varphi\,dz=0,\\
\int_{B_1^+}\nabla \widetilde
V\cdot\nabla \varphi\,dz=0,
\end{cases}\quad \text{for all $\varphi \in H^1(B_1^+)$  such that
$\varphi=0\text{ on }S^1_+$},
\end{equation*}
i.e. $(\widetilde U,\widetilde V)$ weakly solves
\begin{equation}\label{eq:extended_limit}
\begin{cases}
   \Delta \widetilde U=0,&\text{in }B_1^+,\\
   \Delta \widetilde V=0,&\text{in }B_1^+,\\
\partial_\nu \widetilde U=0, &\text{on }B_1',\\
\partial_\nu \widetilde V=0, &\text{on }B_1'.
\end{cases}
\end{equation}
From elliptic regularity under Neumann boundary conditions (see in
particular \cite[Theorem 8.13]{Salsa}) we conclude that
\begin{equation}\label{eq:30}
  \{U_\lambda\}_{\lambda\in(0,\tilde \lambda)} \text{ and }
\{V_\lambda\}_{\lambda\in(0,\tilde \lambda)} \text{ are bounded in
$H^2(B_r^+)$ for all $r\in(0,1)$},
\end{equation}
hence, by compactness, up to passing
to a subsequence,
\begin{equation}\label{eq:strong_conv}
U_{\lambda _{n_k}}\to \widetilde U\ \text{and}\
V_{\lambda _{n_k}}\to\widetilde V\ \text{
 weakly in $H^2(B_r^+)$ and strongly in $H^1(B_r^+)$ for all $r\in(0,1)$}.
\end{equation}
For any $r\in (0,1)$ and $k\in \N$, let us define
the functions
\begin{align*}
&D_k(r)=r^{-N+1} \bigg[\int_{B_r^+} \left(|\nabla
  U_{\lambda_{n_k}}|^2+|\nabla V_{\lambda_{n_k}}|^2+\lambda_{n_k}^2 U_{\lambda_{n_k}}\!
  V_{\lambda_{n_k}}
\right)\,dz\\
&\hskip6cm-\lambda_{n_k}\int_{B_r'} h(\lambda_{n_k} x)u_{\lambda_{n_k}}\!(x)v_{\lambda_{n_k}}\!(x)\, dx  \bigg],\\
&H_k(r)=r^{-N}\int_{S_r^+} (U_{\lambda_{n_k}}^2+V_{\lambda_{n_k}}^2)\, dS,
\end{align*}
where we have set $v_\lambda=\mathop{\rm Tr}V_\lambda$.
By direct calculations we have
\begin{equation}\label{NkNw}
{\mathcal
    N}_k(r):=\frac{D_k(r)}{H_k(r)}=\frac{D(\lambda_{n_k}r)}{H(\lambda_{n_k}r)}
  ={\mathcal N}(\lambda_{n_k}r) \quad \text{for all } r\in (0,1).
\end{equation}
From \eqref{eq:strong_conv} it follows that,
for any fixed $r\in (0,1)$,
\begin{equation} \label{convDk}
 D_k(r)\to \widetilde D(r)\quad\text{and}\quad H_k(r)\to \widetilde
 H(r)\quad\text{as }k\to+\infty
\end{equation}
where
\begin{equation} \label{Dw(r)}
  \widetilde D (r)=
r^{-N+1} \int_{B_r^+} \left(|\nabla
  \widetilde U|^2+|\nabla \widetilde V|^2
\right)dz\quad\text{and}\quad
\widetilde H (r)=
r^{-N}\int_{S_r^+} (\widetilde U^2+\widetilde V^2)\, dS
\end{equation}
for all $r\in(0,1)$.
We observe that $\widetilde H (r)>0$ for all $r\in(0,1)$; indeed, if
$\widetilde H (\bar r)=0$ for some $\bar r\in(0,1)$, the fact that $\widetilde
U,\widetilde V$ (and their even extension for $t<0$) are harmonic would imply that $\widetilde
U\equiv\widetilde V\equiv 0$ in $B_{\bar r}^+$, thus contradicting the
classical unique continuation principle.
Therefore the function
\[
\widetilde {\mathcal N}(r):=\frac{\widetilde  D(r)}{\widetilde  H(r)}
\]
is well defined for $r\in (0,1)$.
From (\ref{NkNw}), (\ref{convDk}), and
Lemma \ref{l:limitN}, we deduce  that
\begin{equation}\label{Nw(r)}
\widetilde {\mathcal    N}(r)=\lim_{k\to \infty} {\mathcal N}(\lambda_{n_k}r)=\gamma
\end{equation}
for all $r\in (0,1)$.
Therefore $\widetilde {\mathcal N}$ is  constant in $(0,1)$ and hence $\widetilde {\mathcal
  N}'(r)=0$ for any $r\in (0,1)$.
Arguing as in the proof of  Lemma \ref{mono} we can prove that
\begin{equation*}
\widetilde{\mathcal N}'(r)=
\frac{2r\Big[
    \Big(\int_{S_r^+}
  \Big(\Big|\frac{\partial \widetilde U}{\partial
        \nu}\Big|^2+\Big|\frac{\partial \widetilde V}
{\partial \nu}\Big|^2\Big) dS\Big) \cdot
    \left(\int_{S_r^+}
  (\widetilde U^2+\widetilde V^2)\,dS\right)-\left(
\int_{S_r^+}
  \left(\widetilde U\frac{\partial \widetilde U}{\partial
      \nu}+\widetilde V\frac{\partial \widetilde V}
{\partial \nu}\right)\, dS\right)^{\!2} \Big]}
{\left(
\int_{S_r^+}
  (\widetilde U^2+\widetilde V^2)\,dS\right)^2}
\end{equation*}
for all $r\in (0,1)$. Therefore for all $r\in (0,1)$
\begin{equation*}
    \Big(\int_{S_r^+}
  \Big(\Big|\tfrac{\partial \widetilde U}{\partial
        \nu}\Big|^2+\Big|\tfrac{\partial \widetilde V}
{\partial \nu}\Big|^2\Big) dS\Big) \cdot
    \left(\int_{S_r^+}
  (\widetilde U^2+\widetilde V^2)\,dS\right)-\left(
\int_{S_r^+}
  \left(\widetilde U\tfrac{\partial \widetilde U}{\partial
      \nu}+\widetilde V\tfrac{\partial \widetilde V}
{\partial \nu}\right)\, dS\right)^{\!2}=0
\end{equation*}
which implies that $(\widetilde U,\widetilde V)$ and
$(\frac{\partial \widetilde U}{\partial \nu},\frac{\partial \widetilde
  V}{\partial \nu})$
have the same direction as vectors in $L^2(S_r^+)\times L^2(S_r^+)$. Hence
there exists a function $\eta=\eta(r)$ such that
$\left(\frac{\partial \widetilde U}{\partial \nu}(r\theta),\frac{\partial \widetilde
  V}{\partial \nu}(r\theta)\right)=\eta(r) (\widetilde U (r\theta),\widetilde V (r\theta))$
for all $r\in(0,1)$ and $\theta\in {\mathbb S}^N_+$.  By
integration we obtain
\begin{align}
\label{separate}
&\widetilde U(r\theta)=e^{\int_1^r \eta(s)ds} \widetilde U(\theta)
=\varphi(r) \Psi_1(\theta), \quad  r\in(0,1), \ \theta\in {\mathbb S}^N_+,\\
\label{separate2}
&\widetilde V(r\theta)=e^{\int_1^r \eta(s)ds} \widetilde V(\theta)
=\varphi(r) \Psi_2(\theta), \quad  r\in(0,1), \ \theta\in {\mathbb S}^N_+,\end{align}
where $\varphi(r)=e^{\int_1^r \eta(s)ds}$ and
$\Psi_1=\widetilde U\big|_{{\mathbb S}^{N}_+}$, $\Psi_2(\theta)=\widetilde V\big|_{{\mathbb S}^{N}_+}$.
From \eqref{eq:extended_limit},  \eqref{separate}, and \eqref{separate2},  it follows that
\begin{equation}\label{eq:28}
\begin{cases}
r^{-N}\big(r^{N}\varphi'\big)'\Psi_i(\theta)+
r^{-2}\varphi(r)\Delta_{{\mathbb S}^{N}_+}\Psi_i(\theta)
=0,&\text{on }{\mathbb S}^{N}_+,\\
\partial_\nu \Psi_i=0,&\text{on }\partial{\mathbb S}^{N}_+,
\end{cases}\quad i=1,2.
\end{equation}
Taking $r$ fixed we deduce that $\Psi_1,\Psi_2$ are either zero or
restrictions to ${\mathbb S}^{N}_+$ of eigenfunctions of $-\Delta_{{\mathbb S}^{N}}$  associated to the same
eigenvalue and symmetric with respect to the equator $\partial{\mathbb S}^{N}_+$. Therefore there
exist $\ell\in\N$, $\{\beta_{\ell,m},\beta'_{\ell,m}\}_{m=1}^{M_\ell}\subset \R$ such that
\[
\begin{cases}
-\Delta_{{\mathbb S}^{N}_+}\Psi_1=\lambda_\ell \Psi_1,&\text{on }{\mathbb S}^{N}_+,\\
\partial_\nu \Psi_1=0,&\text{on }\partial{\mathbb S}^{N}_+,
\end{cases}
\quad
\begin{cases}
-\Delta_{{\mathbb S}^{N}_+}\Psi_2=\lambda_\ell \Psi_2,&\text{on }{\mathbb S}^{N}_+,\\
\partial_\nu \Psi_2=0,&\text{on }\partial{\mathbb S}^{N}_+,
\end{cases}
\]
and
\[
\Psi_1=\sum_{m=1}^{M_\ell}\beta_{\ell,m}Y_{\ell,m},\quad
\Psi_2=\sum_{m=1}^{M_\ell}\beta'_{\ell,m}Y_{\ell,m}.
\]
In view of \eqref{eq:27} we have that $\int_{{\mathbb
    S}^{N}_+}(\Psi_1^2+\Psi_2^2)\,dS=1$ and hence
\[
\sum_{m=1}^{M_\ell}((\beta_{\ell,m})^2+(\beta'_{\ell,m})^2)=1.
\]
Since $\Psi_1$ and $\Psi_2$ are not both identically zero, from
\eqref{eq:28} it follows that $\varphi(r)$ solves the equation
\[
\varphi''(r)+\frac{N}r\varphi'(r)-\frac{\lambda_\ell}{r^2}\varphi(r)=0
\]
and hence $\varphi(r)$ is of the form
$$
\varphi(r)=c_1 r^{\ell}+c_2 r^{-(N-1)-\ell}
$$
for some $c_1,c_2\in\R$.
Since either
$|z|^{-(N-1)-\ell}\Psi_1(\frac{z}{|z|}) \notin
H^1(B_1^+)$ or $|z|^{-(N-1)-\ell}\Psi_2(\frac{z}{|z|}) \notin
H^1(B_1^+)$ (being $(\Psi_1,\Psi_2)\not\equiv(0,0)$), we have  that $c_2=0$ and  $\varphi(r)=c_1
r^{\ell}$. Moreover, from $\varphi(1)=1$ we deduce  that $c_1=1$. Then
\begin{equation} \label{expw}
\widetilde U(r\theta)=r^{\ell} \Psi_1(\theta),\quad
\widetilde V(r\theta)=r^{\ell} \Psi_2(\theta),  \quad
\text{for all }r\in (0,1)\text{ and }\theta\in {\mathbb S}^N_+.
\end{equation}
From \eqref{expw} and the fact that
\[
\int_{{\mathbb
    S}^{N}_+}(\Psi_1^2+\Psi_2^2)\,dS=1\quad\text{and}\quad
\int_{{\mathbb
    S}^{N}_+}(|\nabla_{{\mathbb
        S}^{N}}\Psi_1|^2+|\nabla_{{\mathbb
        S}^{N}}\Psi_2|^2)\,dS=\lambda_\ell
\]
 it follows that
\begin{align*}
 \widetilde D (r)&=
\frac{1}{r^{N-1}} \int_{B_r^+}
(|\nabla \widetilde U|^2+|\nabla \widetilde V|^2)\,dt\,dx  \\
&=r^{1-N}\ell^2\int_0^rt^{N+2(\ell-1)}dt+
r^{1-N}\lambda_\ell\int_0^r t^{N+2(\ell-1)}dt=\frac{\ell^2+\ell(N-1+\ell)}{N+2\ell-1}\,r^{2\ell}=
\ell \,r^{2 \ell}
\end{align*}
and
\begin{align*}
  \widetilde  H(r)=\int_{{\mathbb
      S}^{N}_+}\left(\widetilde U^2(r\theta)+\widetilde V^2(r\theta)\right)\,dS=r^{2 \ell}.
\end{align*}
Hence from (\ref{Nw(r)}) it follows
that $\gamma=\widetilde{\mathcal N}(r)=\frac{\widetilde D(r)}{\widetilde H(r)}=\ell$.
The proof of the lemma is  complete.
\end{proof}

\begin{Lemma} \label{l:limite}
 Let $(U,V)\in H^1(B_R^+)\times H^1(B_R^+)$ be  a weak solution to
\eqref{eq:system} such that $(U,V)\not=(0,0)$,
  let $H$ be defined  in  \eqref{eq:H(r)}, and let $\ell$ be as in
  Lemma \ref{l:blowup}. Then the limit
\[
\lim_{r\to0^+}r^{-2\ell}H(r)
\]
exists and it is finite.
\end{Lemma}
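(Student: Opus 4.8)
The plan is to work with the function $L(r):=\log\bigl(r^{-2\ell}H(r)\bigr)$, which is well defined on $(0,r_0)$ since $H>0$ there by Lemma~\ref{l:Hpos} and $H\in W^{1,1}_{\rm loc}(0,R)$ by Lemma~\ref{l:hprime}. Using \eqref{H'2} and the definition \eqref{eq:7} of $\mathcal N$, one computes, for a.e.\ $r\in(0,r_0)$,
\[
L'(r)=\frac{H'(r)}{H(r)}-\frac{2\ell}r=\frac{2\mathcal N(r)}r-\frac{2\ell}r=\frac{2(\mathcal N(r)-\ell)}r ,
\]
so that $L(\rho)-L(r)=\int_r^\rho\frac{2(\mathcal N(s)-\ell)}s\,ds$ for $0<r<\rho<r_0$. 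It therefore suffices to show that $L(r)$ has a limit in $[-\infty,+\infty)$ as $r\to0^+$: exponentiating then yields $r^{-2\ell}H(r)=e^{L(r)}\to e^{\lim L}\in[0,+\infty)$. Recall that $\lim_{r\to0^+}\mathcal N(r)=\gamma=\ell$ by Lemmas~\ref{l:limitN} and \ref{l:blowup}.

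To control $\int_r^\rho\frac{\mathcal N(s)-\ell}s\,ds$ I would split $\mathcal N(s)-\ell$ into its positive and negative parts. The negative part is harmless thanks to the almost-monotonicity of $\mathcal N$: by \eqref{eq:25}, the function $r\mapsto e^{\tilde Cr}(1+\mathcal N(r))$ is nondecreasing on $(0,\tilde r)$ and, by Lemma~\ref{l:blowup}, its limit at $0^+$ is $1+\ell$; hence $e^{\tilde Cr}(1+\mathcal N(r))\ge 1+\ell$, which gives the pointwise lower bound $\mathcal N(r)\ge (1+\ell)e^{-\tilde Cr}-1\ge \ell-(1+\ell)\tilde C r$ on $(0,\tilde r)$. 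Fixing $\rho\in(0,\min\{r_0,\tilde r\})$, it follows that $(\mathcal N(s)-\ell)_-\le (1+\ell)\tilde C s$, so $\int_r^\rho\frac{(\mathcal N(s)-\ell)_-}s\,ds\le(1+\ell)\tilde C\rho$ increases to a finite limit $A\in[0,+\infty)$ as $r\to0^+$. For the positive part, $\int_r^\rho\frac{(\mathcal N(s)-\ell)_+}s\,ds$ has a nonnegative integrand, hence is monotone in $r$ and converges as $r\to0^+$ to some $P\in[0,+\infty]$.

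Combining the two pieces, $L(\rho)-L(r)=2\int_r^\rho\frac{(\mathcal N(s)-\ell)_+}s\,ds-2\int_r^\rho\frac{(\mathcal N(s)-\ell)_-}s\,ds$ converges to $2P-2A\in(-\infty,+\infty]$, so $L(r)\to L(\rho)-2P+2A\in[-\infty,+\infty)$. Exponentiating, $r^{-2\ell}H(r)$ converges to a finite nonnegative limit, strictly positive when $P<\infty$ and equal to $0$ when $P=+\infty$; in all cases the limit exists and is finite, which is the assertion of the lemma.

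The only genuinely delicate point is the sign of $\mathcal N(r)-\ell$ near the origin, and this is exactly what the one-sided estimate $\mathcal N(r)\ge\ell-O(r)$ coming from Lemma~\ref{l:limitN} takes care of; the positive part requires nothing beyond monotonicity of the integral. I do not expect to need here — and the argument above does not prove — the sharper statement $\lim_{r\to0^+}r^{-2\ell}H(r)>0$, which would instead require the integrability $\int_0\frac{(\mathcal N(r)-\ell)_+}r\,dr<\infty$ and hence a finer quantitative control of $\mathcal N'$.
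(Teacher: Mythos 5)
Your proof is correct, and it takes a mildly but genuinely different route from the paper's. The paper differentiates $r^{-2\ell}H(r)$ itself: using \eqref{H'2} it writes $\frac{d}{dr}\bigl(r^{-2\ell}H(r)\bigr)=2r^{-2\ell-1}H(r)\int_0^r\mathcal N'(\rho)\,d\rho$, splits $\mathcal N'=-c+f$ with $f\ge0$ (which is exactly the content of \eqref{eq:25} together with \eqref{eq:23}), and then integrates to get \eqref{inte}; to conclude, it needs the upper bound $H(r)=O(r^{2\ell})$ of Lemma \ref{l:uppb} twice, first to make $\rho^{-2\ell}H(\rho)$ integrable near $0$ and then to ensure the limit is finite. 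You instead differentiate $\log\bigl(r^{-2\ell}H(r)\bigr)$, obtaining $2(\mathcal N(r)-\ell)/r$, and split $\mathcal N-\ell$ into positive and negative parts; your pointwise bound $\mathcal N(r)\ge\ell-(1+\ell)\tilde Cr$, coming from the monotonicity of $e^{\tilde Cr}(1+\mathcal N(r))$ and $\gamma=\ell$, is just the integrated form of the paper's $\mathcal N'\ge-c$, so the almost-monotonicity engine is the same; but the logarithmic formulation lets the positive part be handled by bare monotone convergence (allowing $P=+\infty$, which after exponentiation still gives a finite, possibly zero, limit), and thus dispenses with Lemma \ref{l:uppb} entirely. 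The trade-off: your argument is slightly more self-contained at this point, while the paper's version keeps the identity \eqref{inte} in the additive form that matches the scheme of \cite{FF}; both correctly leave strict positivity of the limit to the separate Fourier-coefficient argument of Lemma \ref{l:limitepositivo}, as you note.
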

\begin{proof}
We recall from Lemma \ref{l:blowup} that
$\ell=\lim_{r\rightarrow 0^+} {\mathcal
    N}(r)$ with $\mathcal N$ as in \eqref{eq:7}.

In view of \eqref{eq:24} it is sufficient to prove that the limit
exists. By \eqref{H'2}
and Lemma~\ref{l:limitN} we have
\begin{align}\label{eq:31}
\frac{d}{dr} \frac{H(r)}{r^{2\ell}}& =-2\ell r^{-2\ell-1}
H(r)+r^{-2\ell} H'(r) =2r^{-2\ell-1} (D(r)-\ell
H(r))\\
\notag&=2r^{-2\ell-1} H(r) \int_0^r {\mathcal N}'(\rho) d\rho.
\end{align}
From \eqref{eq:25} and \eqref{eq:23} it follows that there exists some
$c>0$ such that $\mathcal N'(r)\geq -c$ for all $r\in(0,\tilde
r)$. Then we can write $\mathcal N'(r)=-c+f(r)$ for some function
$f\in L^1_{\rm loc}(0,r_0)$
such that $f(r)\geq 0$ a.e. in $(0,\tilde r)$. Then
integration of \eqref{eq:31} over $(r,\tilde r)$ yields
\begin{equation}\label{inte}
  \frac{H(\tilde r)}{\tilde r^{2\ell}}-
  \frac{H(r)}{r^{2\ell}}=2\int_r^{\tilde r} \rho^{-2\ell-1}
  H(\rho) \left( \int_0^\rho f(t) dt \right) d\rho -2c\int_r^{\tilde r} \rho^{-2\ell}
  H(\rho) d\rho.
\end{equation}
Since $f\geq0$, we have that
$\lim_{r\to 0^+} \int_r^{\tilde r}  \rho^{-2\ell-1} H(\rho) \left( \int_0^\rho
  f(t) dt \right) d\rho$
exists.  On the other hand, \eqref{eq:24} implies that $\rho^{-2\ell}
H(\rho) \in L^1(0,\widetilde r)$.
Therefore both terms at the right hand side of
(\ref{inte}) admit a limit as $r\to 0^+$ (one of which is finite)
and the proof is complete.
\end{proof}

 Let $(U,V)\in H^1(B_R^+)\times H^1(B_R^+)$ be  a weak solution to
\eqref{eq:system} such that $(U,V)\not=(0,0)$.    Let us expand $U$ and $V$  as
\begin{equation*}
U(z)=U(\lambda
\theta)=\sum_{k=0}^\infty\sum_{m=1}^{M_k}\varphi_{k,m}(\lambda)Y_{k,m}(\theta),\quad
V(z)=V(\lambda \theta)=\sum_{k=0}^\infty\sum_{m=1}^{M_k}\widetilde\varphi_{k,m}(\lambda)Y_{k,m}(\theta)
\end{equation*}
where $\lambda=|z|\in(0,R]$, $\theta=z/|z|\in{{\mathbb S}^{N}_+}$, and
\begin{equation}\label{eq:37}
  \varphi_{k,m}(\lambda)=\int_{{\mathbb S}^{N}_+}U(\lambda\,\theta) Y_{k,m}(\theta)\,dS,\quad
\widetilde\varphi_{k,m}(\lambda)=\int_{{\mathbb S}^{N}_+}V(\lambda\,\theta)
    Y_{k,m}(\theta)\,dS
\end{equation}

\begin{Lemma} \label{l:coeff_fourier}
 Let $(U,V)\in H^1(B_R^+)\times H^1(B_R^+)$ be  a weak solution to
\eqref{eq:system} such that $(U,V)\not=(0,0)$,
  let $\ell$ be as in
  Lemma \ref{l:blowup}, and let $\widetilde\varphi_{\ell,m},
  \varphi_{\ell,m}$ be as in \eqref{eq:37}. Then, for all $1\leq m\leq
  M_\ell$,
\begin{align}
\label{eq:24b}\varphi_{\ell,m}(\lambda)&=
\lambda^{\ell}
\bigg(c_1^{\ell,m}+\int_\lambda^R\frac{t^{-\ell+1}}{2\ell+N-1}\widetilde
                 \varphi_{\ell,m}(t)\,dt\bigg)+
\lambda^{-(N-1)-\ell}\int_0^\lambda\frac{t^{N+\ell}}{N+2\ell-1}
\widetilde \varphi_{\ell,m}(t)\,dt\\
\notag&=
\lambda^{\ell}
\bigg(c_1^{\ell,m}+\int_\lambda^R\frac{t^{-\ell+1}}{2\ell+N-1}\widetilde
                 \varphi_{\ell,m}(t)\,dt+O(\lambda^2)\bigg),\quad\text{as
                    }\lambda\to0^+,\\
\label{eq:24c}\widetilde\varphi_{\ell,m}(\lambda) &=
\lambda^{\ell}
\bigg(d_1^{\ell,m}+\int_\lambda^R\frac{t^{-\ell+1}}{2\ell+N-1}\zeta_{\ell,m}(t)\,dt\bigg)+
\lambda^{-(N-1)-\ell}\int_0^\lambda\frac{t^{N+\ell}}{N+2\ell-1}
\zeta_{\ell,m}(t)\,dt\\
\notag&=\lambda^{\ell}
\bigg(d_1^{\ell,m}+\int_\lambda^R\frac{t^{-\ell+1}}{2\ell+N-1}\zeta_{\ell,m}(t)\,dt+O(\lambda)\bigg),\quad\text{as
                    }\lambda\to0^+,
\end{align}
where
\begin{equation}\label{eq:38ell}
  \zeta_{\ell,m}(\lambda)=\frac{1}{\lambda}\int_{{\mathbb
      S}^{N-1}}h(\lambda\theta')U(\lambda\theta',0)Y_{\ell,m}(\theta',0)\,dS',
\end{equation}
and
\begin{align}
\label{eq:36}&  c_1^{\ell,m}=R^{-\ell}\int_{{\mathbb S}^{N}_+}U(R\,\theta)
  Y_{\ell,m}(\theta)\,dS-
\frac{R^{-N-2\ell+1}}{N+2\ell-1}\int_0^R t^{N+\ell}\left(\int_{{\mathbb S}^{N}_+}V(t\,\theta)
    Y_{\ell,m}(\theta)\,dS\right)\,dt,\\
\label{eq:39}&  d_1^{\ell,m}=R^{-\ell}\int_{{\mathbb S}^{N}_+}V(R\,\theta)
  Y_{\ell,m}(\theta)\,dS\\
\notag&\qquad\qquad-
\frac{R^{-N-2\ell+1}}{N+2\ell-1}\int_0^R t^{N+\ell-1}\left(
\int_{{\mathbb
      S}^{N-1}}h(t\theta')U(t\theta',0)Y_{k,m}(\theta',0)\,dS'\right)\,dt.
\end{align}
\end{Lemma}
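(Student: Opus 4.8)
The plan is to expand $U$ and $V$ on the orthonormal basis $\{Y_{\ell,m}\}$ of $L^2(\mathbb S^N_+)$ consisting of the (equator symmetric) eigenfunctions of \eqref{eq:sph_eig}, reduce \eqref{eq:system} to a pair of second order linear ODEs for the radial coefficients $\varphi_{\ell,m},\widetilde\varphi_{\ell,m}$ of \eqref{eq:37}, and then integrate these ODEs explicitly, using the a priori decay coming from the monotonicity argument to select the relevant solution.

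\textbf{The radial ODEs.} Using $\Delta=\partial_\lambda^2+\frac N\lambda\,\partial_\lambda+\frac1{\lambda^2}\Delta_{\mathbb S^N}$ in polar coordinates $z=\lambda\theta$, $\theta\in\mathbb S^N_+$, I would multiply the interior equations $\Delta U=V$ and $\Delta V=0$ by $Y_{\ell,m}(\theta)$ and integrate over $\mathbb S^N_+$. Two features make this work: each $Y_{\ell,m}$ satisfies $\nabla_{\mathbb S^N}Y_{\ell,m}\cdot\mathbf e=0$ on $\partial\mathbb S^N_+$, so in the Green identity on $\mathbb S^N_+$ the operator $\Delta_{\mathbb S^N}$ can be moved onto $Y_{\ell,m}$ (producing $-\lambda_\ell\varphi_{\ell,m}$, resp.\ $-\lambda_\ell\widetilde\varphi_{\ell,m}$) with no boundary term carried by $Y_{\ell,m}$; moreover the condition $\partial_\nu U=0$ on $B_R'$ forces the conormal derivative of $U$ along $\partial\mathbb S^N_+$ to vanish, so the equation for $U$ has no boundary contribution at all, whereas the condition $\partial_\nu V=hu$ reappears in the equation for $V$ as exactly the rescaled quantity $\zeta_{\ell,m}$ of \eqref{eq:38ell}. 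One arrives at
\begin{align*}
&\varphi_{\ell,m}''(\lambda)+\frac N\lambda\varphi_{\ell,m}'(\lambda)-\frac{\lambda_\ell}{\lambda^2}\varphi_{\ell,m}(\lambda)=\widetilde\varphi_{\ell,m}(\lambda),\\
&\widetilde\varphi_{\ell,m}''(\lambda)+\frac N\lambda\widetilde\varphi_{\ell,m}'(\lambda)-\frac{\lambda_\ell}{\lambda^2}\widetilde\varphi_{\ell,m}(\lambda)=-\zeta_{\ell,m}(\lambda),
\end{align*}
distributionally on $(0,R)$. These identities are legitimate because $U,V\in H^2_{\rm loc}(B_R^+)$ by the Pohozaev-type lemma above (so $\varphi_{\ell,m},\widetilde\varphi_{\ell,m}\in W^{2,1}_{\rm loc}(0,R)$); alternatively one tests the weak formulation of \eqref{eq:system} against $\eta(\lambda)Y_{\ell,m}(\theta)$, $\eta\in C^\infty_c(0,R)$.

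\textbf{A priori decay.} By Lemma \ref{l:uppb}, $H(\lambda)=O(\lambda^{2\gamma})=O(\lambda^{2\ell})$ (recall $\gamma=\ell$ by Lemma \ref{l:blowup}), and since $H(\lambda)=\sum_{k,m}\bigl(\varphi_{k,m}(\lambda)^2+\widetilde\varphi_{k,m}(\lambda)^2\bigr)$ by Parseval on $\mathbb S^N_+$, every coefficient obeys $\varphi_{\ell,m}(\lambda),\widetilde\varphi_{\ell,m}(\lambda)=O(\lambda^\ell)$. In addition one needs $\zeta_{\ell,m}(\lambda)=O(\lambda^{\ell-1})$; this I would obtain from $|\zeta_{\ell,m}(\lambda)|\le\frac C\lambda\bigl(\int_{\mathbb S^{N-1}}u^2(\lambda\theta',0)\,dS'\bigr)^{1/2}$ together with a trace inequality on the sphere $\partial B_\lambda'$ controlling $\int_{\partial B_\lambda'}u^2\,dS'$ by the energies of $U$ over $B_\lambda^+$, combined with the $H^2$-regularity of $U$ and the scale-sharp bounds $\int_{B_\lambda^+}(U^2+V^2)\,dz=O(\lambda^{N+1+2\ell})$, $\int_{B_\lambda^+}(|\nabla U|^2+|\nabla V|^2)\,dz=O(\lambda^{N-1+2\ell})$ that follow from Remark \ref{rem:st} and $D(\lambda)=\mathcal N(\lambda)H(\lambda)=O(\lambda^{2\ell})$. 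I expect this trace/elliptic-regularity chain — getting the sharp power of $\lambda$ for the trace of $u$ on the codimension-two set $\partial B_\lambda'$ — to be the main technical obstacle.

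\textbf{Integration.} Since $\lambda_\ell=\ell(N-1+\ell)$, the homogeneous equation $\phi''+\frac N\lambda\phi'-\frac{\lambda_\ell}{\lambda^2}\phi=0$ has fundamental system $\{\lambda^\ell,\lambda^{-(N-1)-\ell}\}$, with Wronskian-type constant $2\ell+N-1$. Variation of parameters then represents $\widetilde\varphi_{\ell,m}$ (resp.\ $\varphi_{\ell,m}$) as a linear combination of these two solutions plus an explicit particular solution built out of $\zeta_{\ell,m}$ (resp.\ $\widetilde\varphi_{\ell,m}$), with the $\lambda^\ell$-part written as $\lambda^\ell$ times an integral over $(\lambda,R)$ and the $\lambda^{-(N-1)-\ell}$-part as $\lambda^{-(N-1)-\ell}$ times an integral over $(0,\lambda)$, the latter convergent by the bounds of the previous step. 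Because $-(N-1)-\ell<\ell$, the a priori bound $\varphi_{\ell,m}(\lambda),\widetilde\varphi_{\ell,m}(\lambda)=O(\lambda^\ell)$ forces the coefficient of $\lambda^{-(N-1)-\ell}$ to vanish; evaluating the remaining identity at $\lambda=R$ and using the definitions of $\varphi_{\ell,m}(R)$, $\widetilde\varphi_{\ell,m}(R)$ identifies the surviving constant with $c_1^{\ell,m}$ (resp.\ $d_1^{\ell,m}$), after rewriting $\int_0^R t^{N+\ell}\zeta_{\ell,m}(t)\,dt=\int_0^R t^{N+\ell-1}\bigl(\int_{\mathbb S^{N-1}}h(t\theta')U(t\theta',0)Y_{\ell,m}(\theta',0)\,dS'\bigr)\,dt$ by Fubini. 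Finally, the remainders $O(\lambda^2)$ in \eqref{eq:24b} and $O(\lambda)$ in \eqref{eq:24c} come from dividing the term $\lambda^{-(N-1)-\ell}\int_0^\lambda(\cdots)\,dt$ by $\lambda^\ell$ and inserting $\widetilde\varphi_{\ell,m}(t)=O(t^\ell)$, resp.\ $\zeta_{\ell,m}(t)=O(t^{\ell-1})$, from the previous step.
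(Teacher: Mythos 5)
Your outline follows the paper's proof almost step by step: Parseval together with $H(\lambda)=O(\lambda^{2\ell})$ gives $\varphi_{\ell,m}(\lambda),\widetilde\varphi_{\ell,m}(\lambda)=O(\lambda^{\ell})$; the system is projected onto each $Y_{\ell,m}$ to produce two radial ODEs in which the Neumann datum enters only through $\zeta_{\ell,m}$; variation of parameters with the fundamental system $\{\lambda^{\ell},\lambda^{-(N-1)-\ell}\}$, exclusion of the singular branch via the $O(\lambda^{\ell})$ bound, evaluation at $\lambda=R$, and the same bookkeeping for the $O(\lambda^{2})$ and $O(\lambda)$ remainders. Concerning the step you single out as the main obstacle, the paper gets $\zeta_{\ell,m}(\lambda)=O(\lambda^{\ell-1})$ not from a trace inequality for $U$ on $\partial B_\lambda'$ at scale $\lambda$, but by rescaling: it writes $\zeta_{\ell,m}(\lambda)=\frac{2^{N-1}\sqrt{H(2\lambda)}}{\lambda}\int_{\partial B_{1/2}'}h(2\lambda x)U_{2\lambda}(x,0)Y_{\ell,m}\big(\tfrac{x}{|x|},0\big)\,dS'$ and invokes the uniform $H^2(B_{1/2}^+)$ bound \eqref{eq:30} on the normalized family $U_\lambda$ from Lemma \ref{l:blowup}, whence the traces on $\partial B_{1/2}'$ are bounded in $L^2$ and $\zeta_{\ell,m}(\lambda)=O(\sqrt{H(2\lambda)}/\lambda)$. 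Your plan is workable but must be phrased in this scale-invariant form; plain $H^2_{\rm loc}$ regularity of $U$ together with the $H^1$-level bounds of Remark \ref{rem:st} does not by itself control the codimension-two trace with the sharp power, while \eqref{eq:30} is already available and closes the step at once.

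There is, however, one concrete inconsistency you must resolve: the sign of the first ODE. You write $\varphi_{\ell,m}''+\frac N\lambda\varphi_{\ell,m}'-\frac{\lambda_\ell}{\lambda^2}\varphi_{\ell,m}=\widetilde\varphi_{\ell,m}$, whereas the paper's proof works with $-\varphi_{\ell,m}''-\frac N\lambda\varphi_{\ell,m}'+\frac{\lambda_\ell}{\lambda^2}\varphi_{\ell,m}=\widetilde\varphi_{\ell,m}$, and a direct check shows that the representation \eqref{eq:24b} is the variation-of-parameters solution of the latter equation: plugging \eqref{eq:24b} into the operator $\varphi\mapsto\varphi''+\frac N\lambda\varphi'-\frac{\lambda_\ell}{\lambda^2}\varphi$ yields $-\widetilde\varphi_{\ell,m}$, not $+\widetilde\varphi_{\ell,m}$. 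Consequently, carrying out your ``Integration'' step with your ODE would produce \eqref{eq:24b} and \eqref{eq:36} with the sign of both $\widetilde\varphi_{\ell,m}$-integrals reversed, i.e.\ not the stated formulas. (Your second ODE, with $-\zeta_{\ell,m}$ on the right, does agree with the paper's and with \eqref{eq:24c}.) So before evaluating at $\lambda=R$ you need to reconcile the sign of your projection of $\Delta U=V$ with the convention under which \eqref{eq:24b} and \eqref{eq:36} are written; as the proposal stands, the ODE you derive and the identity you claim to obtain from it are incompatible.
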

\begin{proof}
  From the Parseval identity it follows that
\begin{equation}\label{eq:17bis}
H(\lambda)=\int_{{\mathbb
    S}^{N}_+}\big(U^2(\lambda\theta)+V^2(\lambda\theta)\big)\,dS=
\sum_{k=0}^\infty\sum_{m=1}^{M_k}\big(\varphi_{k,m}^2(\lambda)+\widetilde\varphi_{k,m}^2(\lambda)\big),
\quad\text{for all }0<\lambda\leq R.
\end{equation}
In particular \eqref{eq:24} and \eqref{eq:17bis} yield, for all
$k\geq0$ and $1\leq m\leq M_k$,
\begin{equation}\label{eq:23b}
\varphi_{k,m}(\lambda)=O(\lambda^{\ell})\quad\text{and}\quad
\widetilde\varphi_{k,m}(\lambda)=O(\lambda^{\ell})
\quad\text{as }\lambda\to0^+.
\end{equation}
Equations \eqref{eq:system} and \eqref{eq:sph_eig} imply that, for every $k\geq0$ and $1\leq m\leq M_k$,
\begin{equation*}
\begin{cases}
-\varphi_{k,m}''(\lambda)-\frac{N}{\lambda}\varphi_{k,m}'(\lambda)+
\frac{k(N-1+k)}{\lambda^2} \, \varphi_{k,m}(\lambda)=\widetilde\varphi_{k,m}(\lambda),&\text{in
}(0,R),\\[10pt]
-\widetilde\varphi_{k,m}''(\lambda)-\frac{N}{\lambda}\widetilde\varphi_{k,m}'(\lambda)+
 \frac{k(N-1+k)}{\lambda^2} \, \widetilde\varphi_{k,m}(\lambda)=\zeta_{k,m}(\lambda),&\text{in
}(0,R),
\end{cases}
\end{equation*}
where
\begin{equation}\label{eq:38}
  \zeta_{k,m}(\lambda)=\frac{1}{\lambda}\int_{{\mathbb
      S}^{N-1}}h(\lambda\theta')U(\lambda\theta',0)Y_{k,m}(\theta',0)\,dS'.
\end{equation}
By direct calculations we have, for some $c_1^{k,m},c_2^{k,m},d_1^{k,m},d_2^{k,m}\in\R$,
\begin{align}
\label{eq:33}\varphi_{k,m}(\lambda)=\lambda^{k}
\bigg(c_1^{k,m}&+\int_\lambda^R\frac{t^{-k+1}}{2k+N-1}\widetilde
                 \varphi_{k,m}(t)\,dt\bigg)\\
\notag&+\lambda^{-(N-1)-k}
\bigg(c_2^{k,m}+\int_\lambda^R\frac{t^{N+k}}{1-N-2k}
\widetilde \varphi_{k,m}(t)\,dt\bigg),\\
\label{eq:34}\widetilde\varphi_{k,m}(\lambda)=\lambda^{k}
\bigg(d_1^{k,m}&+\int_\lambda^R\frac{t^{-k+1}}{2k+N-1}\zeta_{k,m}(t)\,dt\bigg)\\
\notag&+\lambda^{-(N-1)-k}
\bigg(d_2^{k,m}+\int_\lambda^R\frac{t^{N+k}}{1-N-2k}
\zeta_{k,m}(t)\,dt\bigg).
\end{align}
We observe that
\begin{equation}\label{eq:32}
\zeta_{k,m}(\lambda)=\frac{2^{N-1}\sqrt{H(2\lambda)}}{\lambda}
\int_{\partial B'_{1/2}}h(2\lambda x)U_{2\lambda}(x,0)Y_{k,m}\big(\tfrac{x}{|x|},0\big)\,dS'
\end{equation}
with $U_\lambda$ as in \eqref{eq:UVtau}. Since $\{U_\lambda\}_\lambda$
is bounded in $H^2(B^+_{1/2})$ in view of \eqref{eq:30}, from
continuity of the trace embedding $H^2(B^+_{1/2})\hookrightarrow
H^{3/2}(B_{1/2}')$ we deduce that  $\{\mathop{\rm Tr} U_\lambda\}_\lambda$
is bounded in $H^1(B_{1/2}')$ and its trace on $\partial B_{1/2}'$ is
bounded in $L^2(\partial B'_{1/2})$. Hence from \eqref{eq:32} and
\eqref{eq:24} we conclude that, for all
$k\geq0$ and $1\leq m\leq M_k$,
\begin{equation}\label{eq:zeta}
 \zeta_{k,m}(\lambda)=O(\lambda^{\ell-1})\quad\text{as }\lambda\to 0^+.
\end{equation}
From \eqref{eq:23b} and \eqref{eq:zeta} it follows that, for all $1\leq
m\leq M_\ell$, the functions
\[
t\mapsto t^{-\ell+1}\widetilde
                 \varphi_{\ell,m}(t),
\quad
t\mapsto t^{N+\ell}\widetilde \varphi_{\ell,m}(t),\quad
t\mapsto t^{-\ell+1}\zeta_{\ell,m}(t),
\quad
t\mapsto t^{N+\ell}\zeta_{\ell,m}(t),
\]
belong to $L^1(0,R)$. Hence
\begin{align*}
  &\lambda^{\ell}
    \bigg(c_1^{\ell,m}+\int_\lambda^R\frac{t^{-\ell+1}}{2\ell+N-1}\widetilde
    \varphi_{\ell,m}(t)\,dt\bigg)=o(\lambda^{-(N-1)-\ell}),\quad\text{as
    }\lambda\to0^+,\\
  &\lambda^{\ell}
    \bigg(d_1^{\ell,m}+\int_\lambda^R\frac{t^{-\ell+1}}{2\ell+N-1}\zeta_{\ell,m}(t)\,dt\bigg)=o(\lambda^{-(N-1)-\ell}),
    \quad\text{as }\lambda\to0^+,
\end{align*}
and consequently, by \eqref{eq:23b}, there must be
\[
c_2^{\ell,m}=-\int_0^R\frac{t^{N+\ell}}{1-N-2\ell}
\widetilde \varphi_{\ell,m}(t)\,dt\quad\text{and}\quad
d_2^{\ell,m}=-\int_0^R\frac{t^{N+\ell}}{1-N-2\ell}
\zeta_{\ell,m}(t)\,dt.
\]
Using \eqref{eq:23b} and \eqref{eq:zeta}, we then deduce that
\begin{align}\label{eq:12-alt}
&\lambda^{-(N-1)-\ell}
\bigg(c_2^{\ell,m}+\int_\lambda^R\tfrac{t^{N+\ell}}{1-N-2\ell}
\widetilde \varphi_{\ell,m}(t)\,dt\bigg)
=\lambda^{-(N-1)-\ell}\int_0^\lambda\tfrac{t^{N+\ell}}{N+2\ell-1}
\widetilde \varphi_{\ell,m}(t)\,dt
=O(\lambda^{\ell+2}),\\
\label{eq:12b}&\lambda^{-(N-1)-\ell}
\bigg(d_2^{\ell,m}+\int_\lambda^R\tfrac{t^{N+\ell}}{1-N-2\ell}
\zeta_{\ell,m}(t)\,dt\bigg)
=\lambda^{-(N-1)-\ell}\int_0^\lambda\tfrac{t^{N+\ell}}{N+2\ell-1}
\zeta_{\ell,m}(t)\,dt
=O(\lambda^{\ell+1}),
  \end{align}
as $\lambda\to0^+$.  From \eqref{eq:33}, \eqref{eq:34}, \eqref{eq:12-alt},
and\eqref{eq:12b} we deduce \eqref{eq:24b} and \eqref{eq:24c}.
Finally, \eqref{eq:36} and \eqref{eq:39} follow by computing
\eqref{eq:24b} and \eqref{eq:24c} for $\lambda=R$ and recalling \eqref{eq:37}.
\end{proof}

\noindent We now prove that
$\lim_{r\to 0^+} r^{-2\ell} H(r)$ is strictly positive.

\begin{Lemma} \label{l:limitepositivo} Under the same assumption as in
  Lemmas \ref{l:limite}, we have
\[
\lim_{r\to0^+}r^{-2\ell}H(r)>0.
\]
\end{Lemma}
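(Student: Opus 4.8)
The plan is to argue by contradiction. By Lemma~\ref{l:limite} the limit $L:=\lim_{r\to0^+}r^{-2\ell}H(r)$ exists and is finite, and $L\ge0$ because $H\ge0$; I suppose $L=0$, i.e.\ $H(\lambda)=o(\lambda^{2\ell})$ as $\lambda\to0^+$. The guiding idea is that $L=0$ forces the $\ell$-th order angular Fourier components of $(U,V)$ to vanish to order strictly larger than $\ell+1$, which is incompatible both with the essentially sharp lower bound $H(r)\ge K(\sigma)r^{2\ell+\sigma}$ of Lemma~\ref{l:uppb} and with the nontriviality of the $\ell$-homogeneous blow-up profile of Lemma~\ref{l:blowup}.

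First I would exploit $H(\lambda)=o(\lambda^{2\ell})$ through the Parseval identity~\eqref{eq:17bis}, which gives $\varphi_{k,m}(\lambda)=o(\lambda^\ell)$ and $\widetilde\varphi_{k,m}(\lambda)=o(\lambda^\ell)$ for all $k,m$. Letting $\lambda\to0^+$ in the expansions~\eqref{eq:24b}--\eqref{eq:24c} of Lemma~\ref{l:coeff_fourier} (whose remainders are $O(\lambda^2)$, resp.\ $O(\lambda)$, and whose improper integrals $\int_0^R$ converge), this forces, for $1\le m\le M_\ell$,
\[
c_1^{\ell,m}+\int_0^R\tfrac{t^{-\ell+1}}{2\ell+N-1}\widetilde\varphi_{\ell,m}(t)\,dt=0,\qquad
d_1^{\ell,m}+\int_0^R\tfrac{t^{-\ell+1}}{2\ell+N-1}\zeta_{\ell,m}(t)\,dt=0,
\]
so that \eqref{eq:24b}--\eqref{eq:24c} collapse to the purely ``local'' representations
\[
\varphi_{\ell,m}(\lambda)=-\lambda^\ell\!\int_0^\lambda\tfrac{t^{-\ell+1}}{2\ell+N-1}\widetilde\varphi_{\ell,m}(t)\,dt+\lambda^{-(N-1)-\ell}\!\int_0^\lambda\tfrac{t^{N+\ell}}{N+2\ell-1}\widetilde\varphi_{\ell,m}(t)\,dt,
\]
and the same identity for $\widetilde\varphi_{\ell,m}$ with $\widetilde\varphi_{\ell,m}$ replaced by $\zeta_{\ell,m}$ in both integrands. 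Next comes a one-step bootstrap: arguing exactly as in the derivation of~\eqref{eq:zeta} in the proof of Lemma~\ref{l:coeff_fourier} (through~\eqref{eq:32}, the boundedness of $\{U_\lambda\}$ in $H^2(B_{1/2}^+)$ from~\eqref{eq:30}, and the trace bounds used there), but now with $H(\lambda)=o(\lambda^{2\ell})$ in place of~\eqref{eq:24}, one gets $\zeta_{\ell,m}(\lambda)=o(\lambda^{\ell-1})$. Inserting $\widetilde\varphi_{\ell,m}(t)=o(t^\ell)$ into the reduced formula for $\varphi_{\ell,m}$ and $\zeta_{\ell,m}(t)=o(t^{\ell-1})$ into that for $\widetilde\varphi_{\ell,m}$, and estimating the two integrals, yields
\[
\varphi_{\ell,m}(\lambda)=o(\lambda^{\ell+2}),\qquad \widetilde\varphi_{\ell,m}(\lambda)=o(\lambda^{\ell+1})\qquad\text{as }\lambda\to0^+.
\]

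To conclude, by Lemma~\ref{l:uppb} with $\sigma=1$ we have $H(\lambda)\ge K(1)\lambda^{2\ell+1}$ for small $\lambda$, whence
\[
\frac{\sum_{m=1}^{M_\ell}\bigl(\varphi_{\ell,m}^2(\lambda)+\widetilde\varphi_{\ell,m}^2(\lambda)\bigr)}{H(\lambda)}\le\frac{o(\lambda^{2\ell+2})}{K(1)\lambda^{2\ell+1}}\longrightarrow0.
\]
On the other hand, fixing any $\lambda_n\to0^+$, Lemma~\ref{l:blowup} provides a subsequence along which $U_{\lambda_{n_k}}\to|z|^\ell\sum_m\beta_{\ell,m}Y_{\ell,m}(z/|z|)$ and $V_{\lambda_{n_k}}\to|z|^\ell\sum_m\beta'_{\ell,m}Y_{\ell,m}(z/|z|)$ strongly in $L^2(S_1^+)$ (from weak $H^1(B_1^+)$ convergence and compactness of the trace $H^1(B_1^+)\hookrightarrow L^2(S_1^+)$), with $\sum_m(\beta_{\ell,m}^2+(\beta'_{\ell,m})^2)=1$. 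Since $\int_{S_1^+}U_{\lambda_{n_k}}Y_{\ell,m}\,dS=\varphi_{\ell,m}(\lambda_{n_k})/\sqrt{H(\lambda_{n_k})}$ by~\eqref{eq:37}, and similarly for $V$, the left-hand side of the last display converges along $\lambda_{n_k}$ to $\sum_m(\beta_{\ell,m}^2+(\beta'_{\ell,m})^2)=1$, a contradiction. Hence $L>0$.

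I expect the bootstrap to be the main obstacle. Merely knowing that the leading $\lambda^\ell$-coefficients vanish gives only $\varphi_{\ell,m}(\lambda),\widetilde\varphi_{\ell,m}(\lambda)=o(\lambda^\ell)$, which does not beat the essentially sharp lower bound $H(\lambda)\ge K(\sigma)\lambda^{2\ell+\sigma}$; one genuinely needs the integral representations of Lemma~\ref{l:coeff_fourier} together with the improved trace decay $\zeta_{\ell,m}(\lambda)=o(\lambda^{\ell-1})$ in order to push the vanishing order of \emph{both} components strictly past $\ell+1$. A minor point to verify is the uniform-in-$\lambda$ bound on the relevant boundary trace entering $\zeta_{\ell,m}$, which is handled exactly as in the proof of Lemma~\ref{l:coeff_fourier}.
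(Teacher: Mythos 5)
Your proposal is correct and follows essentially the same contradiction argument as the paper: the Parseval identity forces the leading coefficients $c_1^{\ell,m}+\int_0^R\frac{t^{-\ell+1}}{2\ell+N-1}\widetilde\varphi_{\ell,m}\,dt$ and $d_1^{\ell,m}+\int_0^R\frac{t^{-\ell+1}}{2\ell+N-1}\zeta_{\ell,m}\,dt$ to vanish, the integral representations of Lemma \ref{l:coeff_fourier} then push the vanishing order of $\varphi_{\ell,m}$ and $\widetilde\varphi_{\ell,m}$ strictly past $\ell+1$, and the lower bound \eqref{2ndest} with $\sigma=1$ combined with the nontrivial blow-up of Lemma \ref{l:blowup} yields the contradiction. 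The only (harmless) difference is that your bootstrap to $o$-estimates for $\zeta_{\ell,m}$ is unnecessary: the bound $\zeta_{\ell,m}(\lambda)=O(\lambda^{\ell-1})$ from \eqref{eq:zeta} already gives $\widetilde\varphi_{\ell,m}(\lambda)=O(\lambda^{\ell+1})$ and $\varphi_{\ell,m}(\lambda)=O(\lambda^{\ell+2})$, which divided by $\sqrt{H(\lambda)}\geq K\lambda^{\ell+\frac12}$ still tend to zero, and this is exactly how the paper argues.
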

\begin{proof}
Let us assume by contradiction that
$\lim_{\lambda\to0^+}\lambda^{-2\ell}H(\lambda)=0$. Then,
for all $1\leq m\leq M_\ell$, (\ref{eq:17bis})
would imply that
\begin{equation*}
  \lim_{\lambda\to0^+}\lambda^{-\ell}\varphi_{\ell,m}(\lambda)=
\lim_{\lambda\to0^+}\lambda^{-\ell}\widetilde\varphi_{\ell,m}(\lambda)=0.
\end{equation*}
Hence, in view of \eqref{eq:24b} and \eqref{eq:24c},
\[
c_1^{\ell,m}+\int_0^R\frac{t^{-\ell+1}}{2\ell+N-1}\widetilde
                 \varphi_{\ell,m}(t)\,dt=0\quad\text{and}\quad
d_1^{\ell,m}+\int_0^R\frac{t^{-\ell+1}}{2\ell+N-1}\zeta_{\ell,m}(t)\,dt=0
\]
which, in view of (\ref{eq:zeta}) and \eqref{eq:23b}, yields
\begin{align}\label{eq:13tris}
\lambda^{\ell}
\bigg(c_1^{\ell,m}+\int_\lambda^R\frac{t^{-\ell+1}}{2\ell+N-1}\widetilde
                 \varphi_{\ell,m}(t)\,dt\bigg)=
\lambda^{\ell}\int_0^\lambda\frac{t^{-\ell+1}}{1-2\ell-N}\widetilde
                 \varphi_{\ell,m}(t)\,dt=O(\lambda^{\ell+2})\\
\label{eq:13q}
\lambda^{\ell}
\bigg(d_1^{\ell,m}+\int_\lambda^R\frac{t^{-\ell+1}}{2\ell+N-1}\zeta_{\ell,m}(t)\,dt\bigg)=
\lambda^{\ell}\int_0^\lambda\frac{t^{-\ell+1}}{1-2\ell-N}\zeta_{\ell,m}(t)\,dt=O(\lambda^{\ell+1})
\end{align}
as $\lambda\to0^+$. Estimates \eqref{eq:24b}, \eqref{eq:24c},
\eqref{eq:13tris}, and \eqref{eq:13q} imply that
\[
\varphi_{\ell,m}(\lambda)=O(\lambda^{\ell+2})\quad\text{and}\quad
\widetilde\varphi_{\ell,m}(\lambda)=O(\lambda^{\ell+1})\quad\text{as }\lambda\to0^+
\quad\text{for every }
1\leq m\leq M_\ell,
\]
namely,
\[
\sqrt{H(\lambda)}\,(U_\lambda,Y_{\ell,m})_{L^2({\mathbb
  S}^{N}_+)}=
O(\lambda^{\ell+2}) \quad\text{and}\quad
\sqrt{H(\lambda)}\,(V_\lambda,Y_{\ell,m})_{L^2({\mathbb
  S}^{N}_+)}=O(\lambda^{\ell+1})\quad\text{as }\lambda\to0^+,
\]
for every $1\leq m\leq M_\ell$. From (\ref{2ndest}),
there exists $K>0$ such that $\sqrt{H(\lambda)}\geq
K\lambda^{\ell+\frac12}$ for $\lambda$ sufficiently small. Therefore
\begin{equation}\label{eq:26-alt}
(U_\lambda,Y_{\ell,m})_{L^2({\mathbb
  S}^{N}_+)}=
O(\lambda^{\frac32}) \quad\text{and}\quad
(V_\lambda,Y_{\ell,m})_{L^2({\mathbb
  S}^{N}_+)}=O(\lambda^{\frac12})\quad\text{as }\lambda\to0^+,
\end{equation}
for every $1\leq m\leq M_\ell$.  From Lemma \ref{l:blowup},
 for every sequence $\lambda_n\to0^+$, there exist a subsequence
$\{\lambda_{n_k}\}_{k\in\N}$ and $2M_\ell$ real constants
$\beta_{\ell,m},\beta'_{\ell,m}$, $m=1,2,\dots,M_\ell$,  such that
\begin{equation}\label{eq:35}
\sum_{m=1}^{M_\ell}((\beta_{\ell,m})^2+(\beta'_{\ell,m})^2)=1
\end{equation}
and
\[
U_{\lambda_{n_k}}\to
|z|^{\ell}
\sum_{m=1}^{M_\ell}\beta_{\ell,m}Y_{\ell,m}\Big(\frac
z{|z|}\Big),\quad
V_{\lambda_{n_k}}\to
|z|^{\ell}
\sum_{m=1}^{M_\ell}\beta'_{\ell,m}Y_{\ell,m}\Big(\frac
z{|z|}\Big), \quad\text{as }k\to+\infty,
\]
 weakly in $H^1(B_1^+)$ and hence
strongly in $L^2(S_1^+)$. It follows that, for all $m=1,2,\dots,M_\ell$,
\[
\beta_{\ell,m}=\lim_{k\to+\infty}(U_{\lambda_{n_k}},Y_{\ell,m})_{L^2({\mathbb
  S}^{N}_+)}\quad\text{and}\quad
\beta'_{\ell,m}=\lim_{k\to+\infty}(V_{\lambda_{n_k}},Y_{\ell,m})_{L^2({\mathbb
  S}^{N}_+)}
\]
and hence, in view of \eqref{eq:26-alt},
\[
\beta_{\ell,m}=0\quad\text{and}\quad \beta'_{\ell,m}=0
\quad\text{for every $m=1,2,\dots,M_\ell$},
\]
thus contradicting \eqref{eq:35}.
\end{proof}

\begin{proof}[Proof of Theorem \ref{t:asym}]
  From Lemmas \ref{l:blowup} and \ref{l:limitepositivo} there exist
  $\ell\in \N$ such that, for every sequence $\lambda_n\to0^+$, there exist a subsequence
$\{\lambda_{n_k}\}_{k\in\N}$ and $2M_\ell$ real constants
$\alpha_{\ell,m},\alpha '_{\ell,m}$, $m=1,2,\dots,M_\ell$,  such that
$\sum_{m=1}^{M_\ell}((\alpha_{\ell,m})^2+(\alpha'_{\ell,m})^2)\neq0$
and
\begin{equation}\label{eq:40}
\lambda_{n_k}^{-\ell}U(\lambda_{n_k}z)\to
|z|^{\ell}
\sum_{m=1}^{M_\ell}\alpha_{\ell,m}Y_{\ell,m}\Big(\frac
z{|z|}\Big),\quad
\lambda_{n_k}^{-\ell}V(\lambda_{n_k}z)\to
|z|^{\ell}
\sum_{m=1}^{M_\ell}\alpha'_{\ell,m}Y_{\ell,m}\Big(\frac
z{|z|}\Big),
\end{equation}
strongly in $H^1(B_r^+)$ for all $r\in(0,1)$, and then, by
homogeneity, strongly in $H^1(B_1^+)$.

From above, \eqref{eq:37}, \eqref{eq:24b}, \eqref{eq:24c},
 \eqref{eq:38ell}, \eqref{eq:36}, and \eqref{eq:39},             we deduce that
\begin{align*}
\alpha_{\ell,m}&=\lim_{k\to\infty}\lambda_{n_k}^{-\ell}
\int_{{\mathbb S}^{N}_+}U(\lambda_{n_k}\,\theta) Y_{\ell,m}(\theta)\,dS\\
&=\lim_{k\to\infty}\lambda_{n_k}^{-\ell}\varphi_{\ell,m}(\lambda_{n_k})=
c_1^{\ell,m}+\int_0^R\frac{t^{-\ell+1}}{2\ell+N-1}\widetilde
                 \varphi_{\ell,m}(t)\,dt\\
&=R^{-\ell}\int_{{\mathbb S}^{N}_+}U(R\,\theta)
  Y_{\ell,m}(\theta)\,dS-
\frac{R^{-N-2\ell+1}}{N+2\ell-1}\int_0^R t^{N+\ell}\left(\int_{{\mathbb S}^{N}_+}V(t\,\theta)
    Y_{\ell,m}(\theta)\,dS\right)\,dt\\
&\qquad\qquad
+\int_0^R\frac{t^{-\ell+1}}{2\ell+N-1}\left(\int_{{\mathbb S}^{N}_+}V(t\,\theta)
    Y_{\ell,m}(\theta)\,dS\right)\,dt
\end{align*}
and
\begin{align*}
\alpha'_{\ell,m}&=\lim_{k\to\infty}\lambda_{n_k}^{-\ell}
\int_{{\mathbb S}^{N}_+}V(\lambda_{n_k}\,\theta) Y_{\ell,m}(\theta)\,dS\\
&=\lim_{k\to\infty}\lambda_{n_k}^{-\ell}\widetilde\varphi_{\ell,m}(\lambda_{n_k})=
d_1^{\ell,m}+\int_0^R\frac{t^{-\ell+1}}{2\ell+N-1}\zeta_{\ell,m}(t)\,dt\\
&=R^{-\ell}\int_{{\mathbb S}^{N}_+}V(R\,\theta)
  Y_{\ell,m}(\theta)\,dS\\
\notag&\qquad\qquad-
\frac{R^{-N-2\ell+1}}{N+2\ell-1}\int_0^R t^{N+\ell-1}\left(
\int_{{\mathbb
      S}^{N-1}}h(t\theta')U(t\theta',0)Y_{k,m}(\theta',0)\,dS'\right)\,dt\\
&\qquad\qquad
+\int_0^R\frac{t^{-\ell}}{2\ell+N-1}\left(
\int_{{\mathbb
      S}^{N-1}}h(t\theta')U(t\theta',0)Y_{\ell,m}(\theta',0)\,dS'
\right)\,dt.
\end{align*}
We observe that the coefficients $\alpha_{\ell,m},\alpha'_{\ell,m}$ depend neither on the sequence
$\{\lambda_n\}_{n\in\N}$ nor on its subsequence
$\{\lambda_{n_k}\}_{k\in\N}$. Hence the convergences in \eqref{eq:40}  hold as $\lambda\to 0^+$
and the theorem is proved.
\end{proof}

\begin{proof}[Proof of Theorem \ref{t:sun-ext}]
Let us assume by contradiction that $(U,V)\not=(0,0)$. Then Theorem
\ref{t:asym} implies that  there exist
$\ell\in\N$ such that
\begin{equation}\label{eq:42}
\lambda^{-\ell}U(\lambda z)\to
\widehat U(\theta)
,\quad
\lambda^{-\ell}V(\lambda z)\to
\widehat  V(\theta)
,
\end{equation}
strongly in $H^1(B_1^+)$, where $(\widehat  U ,\widehat  V)\neq(0,0)$.

Assumption \eqref{eq:case2} implies that $\widehat
U\equiv0$. Hence $\widehat  V\not\equiv0$.
Let us denote $\widetilde U_\lambda(z)=\lambda^{-\ell-2}U(\lambda
z)$. Then $\widetilde U_\lambda$ satisfies
\[
-\Delta \widetilde U_\lambda(z)=\lambda^{-\ell}V(\lambda z).
\]
We have that, for all $\varphi\in C^{\infty}_{\rm c}(B_1^+)$,
\[
\lim_{\lambda\to0^+}\int_{B_1^+}\nabla \widetilde
U_\lambda(z)\cdot\nabla \varphi(z)\,dz=
\lim_{\lambda\to0^+}
\int_{B_1^+}\lambda^{-\ell}V(\lambda z)\varphi(z)\,dz
=\int_{B_1^+}\widehat  V(z)\varphi(z)\,dz.
\]
On the other, by assumption \eqref{eq:case2} we have that
\begin{align*}
\lim_{\lambda\to0^+}\int_{B_1^+}\nabla \widetilde
U_\lambda(z)\cdot\nabla \varphi(z)\,dz&=
-\lim_{\lambda\to0^+}\int_{B_1^+} \widetilde
U_\lambda(z)\Delta \varphi(z)\,dz\\
&=-\lim_{\lambda\to0^+}\lambda^{-\ell-2}\int_{B_1^+}
U(\lambda z)\Delta \varphi(z)\,dz=0.
\end{align*}
Therefore we obtain that
\[
\int_{B_1^+}\widehat  V(z)\varphi(z)\,dz=0\quad\text{for all
}\varphi\in C^{\infty}_{\rm c}(B_1^+)
\]
which implies that $\widehat  V\equiv 0$ in $B_1^+$, a contradiction.
\end{proof}

\section{Applications to fourth order problems  and higher order
  fractional  equations}\label{sec:appl-fourth-order}

In this section we discuss applications of Theorem \ref{t:asym} to
fourth order problems and higher order fractional
  equations, by proving Theorems \ref{t:asym-ho} and \ref{t:frac}.

\begin{proof}[Proof of Theorem \ref{t:asym-ho}]
From \cite[Proposition 7.2]{FF-prep}
we have that, if $U\in\mathcal D$, then $U\in H^1(B_R^+)$. Furthermore,
\cite[Proposition 2.4]{FF-prep} implies that, if $U\in\mathcal D$ is  a nontrivial weak solution
  to \eqref{eq:44} for some $h\in C^1(\Omega)$, then $V:=\Delta U$
  belongs to $H^1(B_R^+)$ for some $R>0$ so that the couple $(U,V)\in
  H^1(B_R^+)\times H^1(B_R^+)$ is
  a weak solution to
\eqref{eq:system} such that $(U,V)\not=(0,0)$. Then statement (i)
follows from Theorem \ref{t:asym} while (ii) comes from Theorem \ref{t:sun-ext}.
  \end{proof}

\begin{proof}[Proof of Theorem \ref{t:frac}]
In view of \cite{Y} (see also \cite{FF-prep}),  we have that, if $u\in
\mathcal D^{3/2,2}(\R^N)$, then there exists a unique $U\in \mathcal
D$ such that $\Delta^2 U=0$ in $\R^{N+1}_+$ and $\mathop{\rm Tr}(U)=u$
on $\R^{N+1}_+$. Moreover
\begin{equation}\label{eq:49}
\int_{\R^{N+1}_+}\Delta U(x,t) \Delta \varphi(x,t)\,dx\,dt= 2 \, (u,
\mathop{\rm Tr}\varphi)_{\mathcal D^{3/2,2}(\R^N)}
\end{equation}
 for all $\varphi\in\mathcal D$.
In particular, if $u$ solves \eqref{eq:48}, we have that $U$ is a weak solution to \eqref{eq:44}.
Let $V=\Delta U$. Since $(-\Delta)^{3/2} u\in (\mathcal D^{1/2,2}(\R^N))^*$, by \eqref{eq:49} we have that
\begin{equation} \label{eq:**}
\int_{\R^{N+1}_+} V(x,t) \Delta \varphi(x,t)\,dx\,dt=2 \phantom{a}_{(\mathcal D^{1/2,2}(\R^N))^\star}
\left\langle  (-\Delta)^{3/2} u,
\mathop{\rm Tr}\varphi\right\rangle_{\mathcal D^{1/2,2}(\R^N)}
\end{equation}
for all $\varphi\in \mathcal T$ with $\mathcal T$ as in \eqref{eq:space}. Applying \cite[Proposition 2.4]{FF-prep} to $V$ we deduce that $V\in H^1(B_r^+)$ for all $r>0$ and hence by
\eqref{eq:**} and integration by parts we obtain
\begin{equation} \label{eq:***}
-\int_{\R^{N+1}_+} \nabla V(x,t) \cdot \nabla \varphi(x,t)\,dx\,dt=2 \phantom{a}_{(\mathcal D^{1/2,2}(\R^N))^\star}
\left\langle  (-\Delta)^{3/2} u,
\mathop{\rm Tr}\varphi\right\rangle_{\mathcal D^{1/2,2}(\R^N)}
\end{equation}
for all $\varphi \in \mathcal T$.

Since the trace map $\mathop{\rm Tr}$ is
continuous from $\mathcal D^{1,2}(\R^{N+1}_+)$ into
$\mathcal D^{1/2,2}(\R^N)$, in view of assumption \eqref{eq:53} we
have that $W\mapsto
\phantom{a}_{(\mathcal D^{1/2,2}(\R^N))^\star}
\left\langle
  (-\Delta)^{3/2} u,
\mathop{\rm Tr}W\right\rangle_{\mathcal D^{1/2,2}(\R^N)}$ belongs to $(\mathcal
  D^{1,2}(\R^{N+1}_+))^\star$. Then, by
classical minimization methods, we have that the minimum
\[
\min_{W\in \mathcal
  D^{1,2}(\R^{N+1}_+)}\left[\frac12\int_{\R^{N+1}_+}|\nabla
  W(x,t)|^2\,dx\,dt+2 \phantom{a}_{(\mathcal D^{1/2,2}(\R^N))^\star}
\left\langle
  (-\Delta)^{3/2} u,
\mathop{\rm Tr}W\right\rangle_{\mathcal D^{1/2,2}(\R^N)}\right]
\]
is attained by some $\widetilde V\in  \mathcal
  D^{1,2}(\R^{N+1}_+)$ weakly solving
\begin{align}\label{eq:50}
-\int_{\R^{N+1}_+}\nabla\widetilde V(x,t) \cdot
\nabla\varphi(x,t)\,dx\,dt&=2 \,
\phantom{a}_{(\mathcal D^{1/2,2}(\R^N))^\star}
\left\langle
  (-\Delta)^{3/2} u,
\mathop{\rm Tr}\varphi\right\rangle_{\mathcal D^{1/2,2}(\R^N)}\\
\notag&=
2 \int_{\R^N}|\xi|^3 \widehat u\, \overline{\widehat{\mathop{\rm Tr}\varphi}} \,d\xi
\end{align}
for all $\varphi\in C^\infty_c(\overline{\R^{N+1}_+})$.
Combining \eqref{eq:***} and \eqref{eq:50} we infer that
\begin{equation} \label{eq:extension}
\int_{\R^{N+1}_+} \nabla(V(x,t)-\widetilde V(x,t))\cdot \nabla \varphi(x,t)\, dxdt=0 \qquad \text{for all } \varphi\in \mathcal T \, .
\end{equation}
Actually \eqref{eq:extension} still holds true for any $\varphi\in C^\infty_c(\overline{\R^{N+1}_+})$. Indeed, for any $\varphi\in C^\infty_c(\overline{\R^{N+1}})$, one can test \eqref{eq:extension} with
$\varphi_k(x,t)=\varphi(x,t)-\varphi_t(x,0)\, t\, \eta(kt)$, $k\in
\N$, where $\eta\in C^\infty_c(\R)$, $0\le \eta\le 1$, $\eta(t)=1$
for any $t\in [-1,1]$ and $\eta(t)=0$ for any $t\in
(-\infty,-2]\cup [2,+\infty)$, and pass to the limit as $k\to
+\infty$.
Therefore, if we define
\[
\widetilde W=
\begin{cases}
  V(x,t)-\widetilde V(x,t),&\text{if }t\geq0,\\
  V(x,-t)-\widetilde V(x,-t),&\text{if }t<0,
\end{cases}
\]
we easily deduce that $\int_{\R^{N+1}}\nabla \widetilde W\cdot \nabla \varphi \, dz=0$ for all $\varphi\in C^\infty_c(\R^{N+1})$. In particular $\widetilde W$ is harmonic in $\R^{N+1}$.  Furthermore, since $V\in
L^2(\R^{N+1}_+)$ and $\widetilde V\in  \mathcal
  D^{1,2}(\overline{\R^{N+1}_+})$,
we have that $\widetilde W=W_1+W_2$ for some $W_1\in L^2(\R^{N+1})$
and $W_2\in L^{\frac{2(N+1)}{N-1}}(\R^{N+1})$. The mean value property
for harmonic functions ensures that, for every $z\in  \R^{N+1}$ and $R>0$,
\begin{align*}
 |\widetilde W(z)|&= \frac1{|B(z,R)|_{N+1}}\left|\int_{B(z,R)} \widetilde
                    W(y)\,dy\right|\leq \frac{\text{\rm
                    const\,}}{R^{N+1}}
\left(\int_{B(z,R)}|W_1(y)|\,dy+\int_{B(z,R)}|W_2(y)|\,dy\right)\\
&\leq \frac{\text{\rm
                    const\,}}{R^{N+1}}\left(\|W_1\|_{L^2(\R^{N+1})}R^{\frac{N+1}{2}}+
\|W_2\|_{L^{\frac{2(N+1)}{N-1}}(\R^{N+1})}R^{\frac{N+3}{2}}\right)
\end{align*}
where $|\cdot|_{N+1}$ stands for the Lebesgue measure in $\R^{N+1}$ and
$\text{\rm const\,}$ is a positive constant independent of $z$ and $R$
which could vary from line to line. Since the right hand side of the
previous inequality tends to $0$ as $R\to+\infty$, we deduce that
$\widetilde W\equiv0$, and then $\widetilde V=V$. In particular, in
view of \cite{CS} and \eqref{eq:50}, this  implies that
\[
(v,\varphi)_{\mathcal
  D^{1/2,2}(\R^N)}=-2 \, (u,
\varphi)_{\mathcal D^{3/2,2}(\R^N)}\quad\text{for all }
\varphi\in C^\infty_c(\R^N),
\]
where we put $v=\mathop{\rm Tr}V$. This implies that $-2 \, |\xi|^3\widehat u=|\xi|\widehat v$ and hence
$v=2 \, \Delta u$ in $\R^N$.

To prove (i), it is not restrictive to assume $x_0=0$. Let us assume, by contradiction, that $u\not\equiv0$. Then
the couple $(U,V)\neq(0,0)$ is a weak solution to
\eqref{eq:system} in $H^1(B_R^+)\times H^1(B_R^+)$ for some $R>0$ with $h=-2a$.

From Theorem \ref{t:asym} it follows that
either $u$ or $v$ (which are the traces of $U$ and $V$
respectively) have vanishing order $\ell\in\N$ at $0$. In view of
assumption \eqref{eq:47} we have that necessarily $V$ vanishes of
order $\ell$, i.e. there exists $\Psi:{\mathbb S}^{N}_+\to\R$,
 a nontrivial linear combination of spherical
harmonics symmetric with respect to the equator $t=0$, such that
$\Psi\not\equiv0$ on $\partial{\mathbb S}^N_+$,
\[
\lambda^{-\ell}V(\lambda z)\to
|z|^{\ell}\Psi\Big(\frac
z{|z|}\Big) \text{ as }\lambda\to 0\text{ strongly in $H^1(B_1^+)$},
\]
and consequently
\[
\lambda^{-\ell}v(\lambda x)\to
|x|^{\ell}\Psi\Big(\frac
x{|x|},0\Big) \text{ as }\lambda\to 0\text{ strongly in $H^{1/2}(B_1')$  }.
\]
Let us denote
\[
v_\lambda(x)=\lambda^{-\ell}v(\lambda x)\quad\text{and}\quad
\widetilde u_\lambda(x)=\lambda^{-2-\ell}u(\lambda x),
\]
so that
\begin{equation}\label{eq:55}
v_\lambda\to |x|^{\ell}\Psi\Big(\frac
x{|x|},0\Big) \text{ as }\lambda\to 0\text{ strongly in
  $H^{1/2}(B_1')$}
\end{equation}
and
\[
2 \Delta \widetilde u_\lambda=v_\lambda\text{ in
}\R^N.
\]
For every $\varphi\in C^{\infty}_{\rm c}(B_1')$ we have that
\begin{equation}\label{eq:54}
-2 \int_{\R^N} \widetilde u_\lambda(-\Delta\varphi)\,dx=-2
\int_{\R^N} \varphi(-\Delta \widetilde u_\lambda)\,dx
=\int_{\R^N} \varphi v_\lambda \,dx.
\end{equation}
From one hand, assumption \eqref{eq:47} implies that
\[
\lim_{\lambda\to0^+}\int_{\R^N} \widetilde u_\lambda(-\Delta\varphi)\,dx
=0
\]
whereas convergence \eqref{eq:55} yields
\[
\lim_{\lambda\to0^+}\int_{\R^N} \varphi v_\lambda \,dx=
\int_{\R^N} |x|^{\ell}\Psi\Big(\frac
x{|x|},0\Big) \varphi(x)\,dx.
\]
Hence passing to the limit in \eqref{eq:54} we obtain that
\[
\int_{\R^N} |x|^{\ell}\Psi\Big(\frac
x{|x|},0\Big) \varphi(x)\,dx=0\quad\text{for every }\varphi\in
C^{\infty}_{\rm c}(B_1'),
\]
thus contradicting  the fact that $|x|^{\ell}\Psi\Big(\frac
x{|x|},0\Big) \not\equiv0$.

To prove (ii), let us assume by contradiction, that $u\not\equiv0$ in
$\Omega$  and
$u(x)=0$ a.e. in a set $E\subset \Omega$ with $|E|_N>0$, where $|\cdot|_N$ denotes the
$N$-dimensional Lebesgue measure.
Since $2 \Delta u=v$ and $v\in \mathcal  D^{1/2,2}(\R^N)\subset
L^{2}_{\rm loc}(\R^N)$, by classical regularity theory we have that
$u\in H^2_{\rm loc}(\Omega)$. Since $u(x)=0$ for a.e. $x\in E$, we
have that $\nabla u(x)=0$ for a.e. $x\in E$ and hence, since
$\frac{\partial u}{\partial x_i}\in  H^1_{\rm loc}(\Omega)$ for every
$i$, $\Delta u=0$ a.e. in $E$. In particular there exists  a set
$E'\subset E\subset\Omega$
 with $|E'|_N>0$ such that $u(x)=\Delta u(x)=0$ a.e. in
 $E'$. In particular
$v(x)=0$ a.e. in
 $E'$.

By Lebesgue's density Theorem,
a.e. point of $E'$ is a density point of $E'$. Let  $x_0$ be a
density point of $E'$. Hence,  for all $\e>0$ there exists $r_0=r_0(\e)\in(0,1)$
such that, for all $r\in(0,r_0)$,
\begin{equation}\label{eq:56}
\frac{|(\R^N\setminus E')\cap B'_r(x_0)|_N}{|B'_r(x_0)|_N}<\e,
\end{equation}
where $B'_r(x_0)=\{x\in\R^N:|x-x_0|<r\}$.
Theorem \ref{t:asym} implies that
 there exist $\Psi_1,\Psi_2:{\mathbb S}^{N}_+\to\R$ linear combination of spherical
harmonics such that  either $\Psi_1\not\equiv0$ or $\Psi_2\not\equiv0$
and
\begin{equation}\label{eq:57}
\lambda^{-\ell}u(x_0+\lambda (x-x_0) )\to
|x-x_0|^{\ell}\Psi_1\Big(\frac{x-x_0}{|x-x_0|},0\Big)
\end{equation}
and
\begin{equation}\label{eq:58}
\lambda^{-\ell}v(x_0+\lambda (x-x_0) )\to
|x-x_0|^{\ell}\Psi_2\Big(\frac{x-x_0}{|x-x_0|},0\Big)
\end{equation}
as $\lambda\to 0$ strongly in $H^{1/2}(B_1'(x_0))$ and then,  by the Sobolev embedding
$H^{\frac12}(B_1'(x_0))\hookrightarrow L^{\frac{2N}{N-1}}(B_1'(x_0))$,  strongly in $L^{\frac{2N}{N-1}}(B_1'(x_0))$

Since $u\equiv v\equiv 0$ in $E'$,
 by \eqref{eq:56} we have
\begin{align*}
\int_{B'_r(x_0)}u^2(x)\,dx&=\int_{(\R^N\setminus E')\cap
  B'_r(x_0)}u^2(x)\,dx\\
&\leq \bigg(\int_{(\R^N\setminus E')\cap
  B'_r(x_0)}|u(x)|^{2N/(N-1)}dx\bigg)^{\!\!\frac{N-1}{N}}|(\R^N\setminus E') \cap
B'_r(x_0)|_N^{1/N}\\
&<\e^{1/N}|B'_r(x_0)|_N^{1/N}
\bigg(\int_{(\R^N\setminus E')\cap
  B'_r(x_0)}|u(x)|^{2N/(N-1)}dx\bigg)^{\!\!\frac{N-1}{N}}
\end{align*}
and similarly
\begin{align*}
\int_{B'_r(x_0)}v^2(x)\,dx<\e^{1/N}|B'_r(x_0)|_N^{1/N}
\bigg(\int_{(\R^N\setminus E')\cap
  B'_r(x_0)}|v(x)|^{2N/(N-1)}dx\bigg)^{\!\!\frac{N-1}{N}}
\end{align*}
for all $r\in(0,r_0)$. Then, letting
$u^r(x):=r^{-\ell}u(x_0+r(x-x_0))$ and $v^r(x):=r^{-\ell}v(x_0+r(x-x_0))$,
 \begin{align*}
&\int_{B'_1(x_0)}|u^r(x)|^2dx<\Big(\frac{\omega_{N-1}}N\Big)^{\!\frac1N}
\e^{\frac1N}\bigg(\int_{B'_1(x_0)}|u^r(x)|^{\frac{2N}{N-1}}dx\bigg)^{\!\!\frac{N-1}{N}},\\
&\int_{B'_1(x_0)}|v^r(x)|^2dx<\Big(\frac{\omega_{N-1}}N\Big)^{\!\frac1N}
\e^{\frac1N}\bigg(\int_{B'_1(x_0)}|v^r(x)|^{\frac{2N}{N-1}}dx\bigg)^{\!\!\frac{N-1}{N}},
\end{align*}
for all $r\in(0,r_0)$, where $\omega_{N-1}=\int_{{\mathbb S}^{N-1}}1\,dS'$. Letting $r\to 0^+$, from \eqref{eq:57} and \eqref{eq:58}  we have that
 \begin{multline*}
\int_{B'_1(x_0)}
|x-x_0|^{2\ell}\Psi_i^2\Big(\tfrac{x-x_0}{|x-x_0|},0\Big)\,dx\\
\leq
\Big(\frac{\omega_{N-1}}N\Big)^{\!\frac1N}
\e^{\frac1N}\bigg(\int_{B'_1(x_0)}|x-x_0|^{\frac{2N \ell }{N-1}}\left|
\Psi_i\Big(\tfrac{x-x_0}{|x-x_0|},0\Big)\right|^{\frac{2N}{N-1}}dx\bigg)^{\!\!\frac{N-1}{N}}
\quad\text{for }i=1,2,
\end{multline*}
which yields a contradiction as $\e\to 0^+$, since
either $\Psi_1\not\equiv0$ or $\Psi_2\not\equiv0$.
\end{proof}

\end{document}